\numberwithin{equation}{section}
\newtheoremstyle{thmlemcorr}{10pt}{10pt}{\itshape}{}{\bfseries}{.}{10pt}{{\thmname{#1}\thmnumber{ #2}\thmnote{ (#3)}}}
\newtheoremstyle{thmlemcorr*}{10pt}{10pt}{\itshape}{}{\bfseries}{.}\newline{{\thmname{#1}\thmnumber{ #2}\thmnote{ (#3)}}}
\newtheoremstyle{defi}{10pt}{10pt}{\itshape}{}{\bfseries}{.}{10pt}{{\thmname{#1}\thmnumber{ #2}\thmnote{ (#3)}}}
\newtheoremstyle{remexample}{10pt}{10pt}{}{}{\bfseries}{.}{10pt}{{\thmname{#1}\thmnumber{ #2}\thmnote{ (#3)}}}
\newtheoremstyle{ass}{10pt}{10pt}{}{}{\bfseries}{.}{10pt}{{\thmname{#1}\thmnumber{ A#2}\thmnote{ (#3)}}}
\theoremstyle{thmlemcorr}
\newtheorem{theorem}{Theorem}
\numberwithin{theorem}{section}
\newtheorem{lemma}[theorem]{Lemma}
\newtheorem{corollary}[theorem]{Corollary}
\newtheorem{proposition}[theorem]{Proposition}
\theoremstyle{thmlemcorr*}
\newtheorem{theorem*}{Theorem}
\newtheorem{lemma*}[theorem]{Lemma}
\newtheorem{corollary*}[theorem]{Corollary}
\newtheorem{proposition*}[theorem]{Proposition}
\newtheorem{problem*}[theorem]{Problem}
\newtheorem{conjecture*}[theorem]{Conjecture}
\theoremstyle{defi}
\theoremstyle{remexample}
\newtheorem{remark}[theorem]{Remark}
\newtheorem{example}[theorem]{Example}
\theoremstyle{ass}
\newcommand{\Crm}{\mathrm{C}}
\newcommand{\Lrm}{\mathrm{L}}
\newcommand{\Wrm}{\mathrm{W}}
\newcommand{\Acal}{\mathcal{A}}
\newcommand{\Ecal}{\mathcal{E}}
\newcommand{\Fcal}{\mathcal{F}}
\newcommand{\Gcal}{\mathcal{G}}
\newcommand{\Hcal}{\mathcal{H}}
\newcommand{\Lcal}{\mathcal{L}}
\newcommand{\Pcal}{\mathcal{P}}
\newcommand{\Ucal}{\mathcal{U}}
\newcommand{\Wcal}{\mathcal{W}}
\newcommand{\Ebf}{\mathbf{E}}
\newcommand{\Mbf}{\mathbf{M}}
\newcommand{\Ybf}{\mathbf{Y}}
\renewcommand{\Bbb}{\mathbb{B}}
\newcommand{\Sbb}{\mathbb{S}}
\DeclareMathOperator{\id}{id}
\DeclareMathOperator*{\wslim}{w*-lim}
\DeclareMathOperator{\supmod}{sup}
\DeclareMathOperator{\curl}{curl}
\DeclareMathOperator{\rank}{rank}
\DeclareMathOperator{\trace}{trace}
\DeclareMathOperator{\spn}{span}
\DeclareMathOperator{\ran}{ran}
\DeclareMathOperator{\supp}{supp}
\newcommand{\ee}{\mathrm{e}}
\newcommand{\ii}{\mathrm{i}}
\newcommand{\set}[2]{\left\{\, #1 \ \ \textup{\textbf{:}}\ \ #2 \,\right\}}
\newcommand{\setn}[2]{\{\, #1 \ \ \textup{\textbf{:}}\ \ #2 \,\}}
\newcommand{\setb}[2]{\bigl\{\, #1 \ \ \textup{\textbf{:}}\ \ #2 \,\bigr\}}
\newcommand{\setB}[2]{\Bigl\{\, #1 \ \ \textup{\textbf{:}}\ \ #2 \,\Bigr\}}
\newcommand{\setBB}[2]{\biggl\{\, #1 \ \ \textup{\textbf{:}}\ \ #2 \,\biggr\}}
\newcommand{\norm}[1]{\|#1\|}
\newcommand{\normb}[1]{\bigl\|#1\bigr\|}
\newcommand{\normBB}[1]{\biggl\|#1\biggr\|}
\newcommand{\abs}[1]{|#1|}
\newcommand{\abslr}[1]{\left|#1\right|}
\newcommand{\absn}[1]{|#1|}
\newcommand{\absb}[1]{\bigl|#1\bigr|}
\newcommand{\absB}[1]{\Bigl|#1\Bigr|}
\newcommand{\absBB}[1]{\biggl|#1\biggr|}
\newcommand{\floorBB}[1]{\biggl\lfloor #1\biggr\rfloor}
\newcommand{\dpr}[1]{\langle #1 \rangle}	
\newcommand{\dprn}[1]{\langle #1 \rangle}
\newcommand{\dprb}[1]{\bigl\langle #1 \bigr\rangle}
\newcommand{\ddpr}[1]{\langle\!\langle #1 \rangle\!\rangle}
\newcommand{\ddprn}[1]{\langle\!\langle #1 \rangle\!\rangle}
\newcommand{\ddprb}[1]{\bigl\langle\hspace{-2.5pt}\bigl\langle #1 \bigr\rangle\hspace{-2.5pt}\bigr\rangle}
\newcommand{\ddprB}[1]{\Bigl\langle\!\!\Bigl\langle #1 \Bigr\rangle\!\!\Bigr\rangle}
\newcommand{\cl}[1]{\overline{#1}}
\newcommand{\di}{\mathrm{d}}
\newcommand{\dd}{\;\mathrm{d}}
\newcommand{\N}{\mathbb{N}}
\newcommand{\R}{\mathbb{R}}
\newcommand{\C}{\mathbb{C}}
\newcommand{\loc}{\mathrm{loc}}
\newcommand{\sym}{\mathrm{sym}}
\newcommand{\skw}{\mathrm{skew}}
\newcommand{\ONE}{\mathbbm{1}}
\newcommand{\toweakstar}{\overset{*}\rightharpoondown}
\newcommand{\toup}{\uparrow}
\newcommand{\todown}{\downarrow}
\newcommand{\conv}{\star}
\newcommand{\sbullet}{\begin{picture}(1,1)(-0.5,-2)\circle*{2}\end{picture}}
\newcommand{\frarg}{\,\sbullet\,}
\newcommand{\BV}{\mathrm{BV}}
\newcommand{\BD}{\mathrm{BD}}
\newcommand{\LD}{\mathrm{LD}}
\newcommand{\BDY}{\mathbf{BDY}}
\newcommand{\toY}{\overset{\Ybf}{\to}}
\DeclareMathOperator{\Tan}{Tan}
\newcommand{\term}[1]{\textbf{#1}}
\newcommand{\proofstep}[1]{\textit{#1}}
\def\Xint#1{\mathchoice 
{\XXint\displaystyle\textstyle{#1}}%
{\XXint\textstyle\scriptstyle{#1}}%
{\XXint\scriptstyle\scriptscriptstyle{#1}}%
{\XXint\scriptscriptstyle\scriptscriptstyle{#1}}%
\!\int} 
\def\XXint#1#2#3{{\setbox0=\hbox{$#1{#2#3}{\int}$} 
\vcenter{\hbox{$#2#3$}}\kern-.5\wd0}} 
\def\dashint{\,\Xint-}
\newcommand{\restrict}{\begin{picture}(10,8)\put(2,0){\line(0,1){7}}\put(1.8,0){\line(1,0){7}}\end{picture}}
\renewcommand{\epsilon}{\varepsilon}
\renewcommand{\phi}{\varphi}
\title[Lower semicontinuity in BD]{Lower semicontinuity for integral functionals in the space of functions of bounded deformation via rigidity and Young measures}
\author{Filip Rindler}
\address{Mathematical Institute, University of Oxford, 24--29 St Giles', Oxford OX1 3LB, United Kingdom.}
\email{rindler@maths.ox.ac.uk}
\begin{document}

\begin{abstract}
We establish a general weak* lower semicontinuity result in the space $\BD(\Omega)$ of functions of bounded deformation for functionals of the form
\begin{align*}
  \Fcal(u) &:= \int_\Omega f \bigl( x, \Ecal u \bigr) \dd x + \int_\Omega f^\infty \Bigl( x,
    \frac{\di E^s u}{\di \abs{E^s u}} \Bigr) \dd \abs{E^s u} \\
  &\qquad + \int_{\partial \Omega} f^\infty \bigl( x, u|_{\partial \Omega} \odot n_\Omega \bigr)
    \dd \Hcal^{d-1},  \qquad u \in \BD(\Omega).
\end{align*}
The main novelty is that we allow for non-vanishing Cantor-parts in the symmetrized derivative $Eu$. The proof is accomplished via Jensen-type inequalities for generalized Young measures and a construction of good blow-ups, which is based on local rigidity arguments for some differential inclusions involving symmetrized gradients, and an iteration of the blow-up construction. This strategy allows us to establish the lower semicontinuity result without an Alberti-type theorem in $\BD(\Omega)$, which is not available at present. We also include existence and relaxation results for variational problems in $\BD(\Omega)$, as well as a complete discussion of some differential inclusions for the symmetrized gradient in two dimensions.

\vspace{8pt}

\noindent\textsc{MSC (2010):} 49J45 (primary); 35J50, 28B05, 49Q20, 74B05, 74C10.

\noindent\textsc{Keywords:} Bounded deformation, BD, lower semicontinuity, Young measure, rigidity, differential inclusion.

\vspace{8pt}

\noindent\textsc{Date:} \today.
\end{abstract}

\maketitle



\section{Introduction}

The space $\BD(\Omega)$ of functions of bounded deformation, where $\Omega \subset \R^d$ is a bounded Lipschitz domain, was introduced in~\cite{Suqu78ERSE,Suqu79EFEP,MaStCh79SPDP} in order to treat variational problems from the mathematical theory of plasticity, and has been investigated by various authors, see for example~\cite{Kohn79NEDT,Kohn82NIED,Tema85MPP,TemStr80FBD,AmCoDa97FPFB,FucSer00VMPP}. This space consists of all functions $u \in \Lrm^1(\Omega;\R^d)$ with the property that the distributional symmetrized gradient $Eu$ (defined by duality with the symmetrized gradient $\Ecal u := (\nabla u + \nabla u^T)/2$) is a finite matrix-valued Radon measure on $\Omega$.

Several lower semicontinuity theorems in the space $\BD(\Omega)$ are available, see for example~\cite{BeCoDa98CLSP,BaFoTo00RTSF,Ebob05LSRS,GarZap08LSRS}, but they are all restricted to \emph{special} functions of bounded deformation, i.e.\ such that in the Lebesgue--Radon--Nikod\'{y}m decomposition
\[
  Eu = \Ecal u \, \Lcal^d + E^s u,  \qquad \Ecal u \in \Lrm^1(\Omega;\R_\sym^{d \times d}),
\]
the singular part $E^s u$ originates only from jumps and not from Cantor-type measures.

The aim of this work is to prove the following general lower semicontinuity theorem (this is Theorem~\ref{thm:BD_lsc}, see Section~\ref{sc:setup} for notation):

\begin{theorem} \label{thm:BD_lsc_teaser}
Let $\Omega \subset \R^d$ be a bounded Lipschitz domain, and let $f \colon \cl{\Omega} \times \R_\sym^{d \times d} \to \R$ satisfy the following assumptions:
\begin{itemize}
  \item[(i)] $f$ is a Carath\'{e}odory function,
  \item[(ii)] $\abs{f(x,A)} \leq M(1+\abs{A})$ for some $M > 0$ and all $x \in \cl{\Omega}$, $A \in \R_\sym^{d \times d}$,
  \item[(iii)] $f(x,\frarg)$ is symmetric-quasiconvex for all $x \in \cl{\Omega}$, that is,
\[
  \qquad f(x, A) \leq \dashint_{\omega} f \bigl( x, A + \Ecal \psi(z) \bigr) \dd z
\]
for all $A \in \R_\sym^{d \times d}$ and all $\psi \in \Crm_c^\infty(\omega;\R^d)$, where $\omega \subset \R^d$ is an arbitrary bounded Lipschitz domain,
  \item[(iv)] the (strong) recession function
\[
  \qquad f^\infty(x,A) := \lim_{\substack{\!\!\!\! x' \to x \\ \; t \to \infty}}
    \frac{f(x',tA)}{t}
  \qquad\text{exists for all $x \in \cl{\Omega}$, $A \in \R_\sym^{d \times d}$}
\]
and is (jointly) continuous on $\cl{\Omega} \times \R_\sym^{d \times d}$.
\end{itemize}
Then, the functional
\[
\begin{aligned}
  \Fcal(u) &:= \int_\Omega f \bigl( x, \Ecal u \bigr) \dd x + \int_\Omega f^\infty \Bigl( x,
    \frac{\di E^s u}{\di \abs{E^s u}} \Bigr) \dd \abs{E^s u} \\
  &\qquad + \int_{\partial \Omega} f^\infty \bigl( x, u|_{\partial \Omega} \odot n_\Omega \bigr)
    \dd \Hcal^{d-1},  \qquad u \in \BD(\Omega),
\end{aligned}
\]
is sequentially lower semicontinuous with respect to weak*-convergence in the space $\BD(\Omega)$.
\end{theorem}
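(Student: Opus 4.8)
The natural strategy is to reduce the lower semicontinuity statement to a Jensen-type inequality for generalized (Young-measure) objects, and then to establish that inequality via a careful blow-up analysis. I would proceed as follows.

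\textbf{Step 1: Reduction to Young measures.} Let $u_j \toweakstar u$ in $\BD(\Omega)$. By the compactness theory for generalized Young measures on $\cl{\Omega}$, after passing to a subsequence the sequence $(Eu_j)$ generates a generalized Young measure $\nu = (\nu_x, \lambda_\nu, \nu_x^\infty)$, where $\nu_x$ are the oscillation measures, $\lambda_\nu \in \Mcal^+(\cl{\Omega})$ is the concentration measure, and $\nu_x^\infty$ are the concentration-angle measures. Using the representation of integral functionals by their generated Young measures, together with the linear growth bound (ii) and the continuity of $f$ and $f^\infty$ from (i) and (iv), one has $\liminf_j \Fcal(u_j) \geq \dprb{f,\nu} + \text{(boundary term)}$, where the duality pairing decomposes into the absolutely continuous part $\int_\Omega \dpr{f(x,\cdot),\nu_x}\dd x$ and the singular part $\int_{\cl\Omega}\dpr{f^\infty(x,\cdot),\nu_x^\infty}\dd\lambda_\nu$ (the boundary contribution being absorbed by working on $\cl\Omega$ after an extension of $u$). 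Meanwhile $\Fcal(u)$ is itself computed against the elementary Young measure generated by $Eu$. Thus it suffices to prove, for every $\BD$-Young measure $\nu$ generated by a weak*-convergent sequence, the two Jensen-type inequalities: the regular one $\dpr{f(x,\cdot),\nu_x}\geq f\bigl(x,[\nu_x]\bigr)$ for $\Lcal^d$-a.e.\ $x$, and the singular one $\dpr{f^\infty(x,\cdot),\nu_x^\infty}\geq f^\infty\bigl(x,[\nu_x^\infty]\bigr)$ for $\lambda_\nu$-a.e.\ $x$, where $[\cdot]$ denotes the barycenter.

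\textbf{Step 2: Localization and blow-up.} Both Jensen inequalities are local, so one fixes a point $x_0$ (Lebesgue for the first case, a point where $\lambda_\nu$ concentrates along a rank-one or ``symmetrized-rank-one'' direction in the second case) and performs a blow-up: one rescales $u_j$ around $x_0$ at an appropriate rate, obtaining a sequence of $\BD$-functions on a fixed domain whose generated Young measure is a ``tangent'' Young measure $\sigma$, homogeneous in the space variable. The regular case is essentially classical: the tangent measure is generated by a sequence with fixed (affine) boundary data and mean $[\nu_{x_0}]$, and symmetric-quasiconvexity (iii) applied via the definition with test functions $\psi\in\Crm^\infty_c$ yields the inequality, after the standard truncation/averaging argument to handle the linear growth and pass from the Young measure to admissible competitors. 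The singular case is the genuinely new part.

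\textbf{Step 3: Rigidity for the singular Jensen inequality — the main obstacle.} For the concentration part one must understand the structure of the blow-up limit at a point $x_0$ carried by $\lambda_\nu$. Heuristically the blow-up ``wants'' to be one-dimensional: the limiting object should be generated by functions $u$ whose symmetrized gradient $Eu$ is a measure supported on (and pointing in) a single direction. The key rigidity statement is: if $v\in\BD(\omega)$ and $Ev = \xi\otimes_{\sym} e \,\mu$ for a fixed matrix direction and a positive measure $\mu$ concentrated on hyperplanes orthogonal to $e$, then $v$ has a rigid one-dimensional structure (it is, up to an infinitesimal rigid motion, a function of $x\cdot e$ times $\xi$ plus rigid drift), which forces the concentration-angle measure $\nu_{x_0}^\infty$ to be supported on a line through the origin in the direction $[\nu^\infty_{x_0}]$; then $f^\infty(x_0,\cdot)$ being \emph{convex} along that line (the recession function of a symmetric-quasiconvex function is convex in the relevant directions) gives the Jensen inequality. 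The difficulty, and the reason an Alberti-type rank-one theorem is unavailable in $\BD$, is that a priori one does not know the polar $\di E^s u/\di\abs{E^s u}$ takes only rank-one (symmetrized) values $\lambda_\nu$-a.e.; so one cannot simply invoke such a structure theorem. The proposed workaround is an \emph{iteration} of the blow-up construction: one repeatedly blows up, each time improving the concentration direction and reducing the ``effective dimension'' of the support of the tangent Young measure, using the local rigidity lemmas for differential inclusions $Ev\in\{$symmetrized rank-one cone$\}$, until one reaches a genuinely one-dimensional tangent object to which convexity applies. Carrying out this iteration — proving that the blow-ups converge, that the rigidity lemmas are applicable at each stage with uniform control, and that the procedure terminates — is the technical heart of the argument and the step I expect to be hardest; the linear-growth truncation arguments and the Young-measure bookkeeping of Steps 1–2 are routine by comparison.
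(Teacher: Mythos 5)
Your Steps 1 and 2 follow the paper's architecture (reduce to Jensen-type inequalities for the generated BD-Young measure, then localize via regular and singular tangent Young measures), and the regular-point inequality is indeed routine. The gap is in Step 3, in two places.

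First, rigidity constrains only the \emph{barycenter} $[\sigma]=Ev$ of the singular tangent Young measure, not the angular measures $\nu_{x_0}^\infty$ themselves: the generating sequence may still oscillate and concentrate in arbitrary matrix directions. So your claim that the rigidity ``forces $\nu_{x_0}^\infty$ to be supported on a line through the origin,'' after which convexity of $f^\infty(x_0,\frarg)$ along that line finishes the proof, is false in general and the argument does not close that way. What is actually needed is the full symmetric-quasiconvexity inequality tested against a generating sequence of the tangent Young measure whose boundary values have been adjusted to those of the blow-up limit $v$; and since $v$ need not be affine but only a sum of \emph{two} one-directional profiles $h_1(x\cdot a)b+h_2(x\cdot b)a$ (both are genuinely needed, cf.\ $u=(\ONE_{\{x_2>0\}},\ONE_{\{x_1>0\}})^T$), one must first average it into an affine map by periodic extension over parallelotopes with face normals $a,b$ together with a compensating staircase correction. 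Your rigidity hypothesis also assumes what must be proved: the blow-up only gives $Ev=P_0\abs{Ev}$, with no a priori information that $\abs{Ev}$ is concentrated on hyperplanes.

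Second, the case that actually requires the new idea is $\dprn{\id,\nu_{x_0}^\infty}\neq a\odot b$, which cannot be discarded without an Alberti-type theorem in $\BD$ and to which no ``symmetrized rank-one'' rigidity applies; indeed $\Ecal v\in\spn\{P_0\}$ with $P_0\neq a\odot b$ does \emph{not} force $v$ to be affine (e.g.\ $v=(\ee^{x_1}\sin x_2,\,-\ee^{x_1}\cos x_2)^T$ with $P_0$ the identity). The paper's resolution is a Fourier-multiplier (Mihlin) ellipticity estimate showing that $Ev$ is absolutely continuous with respect to $\Lcal^d$, followed by a \emph{single} second blow-up (tangent measures of tangent measures are tangent measures), which yields an affine blow-up to which quasiconvexity applies directly. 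Your proposed iteration ``reducing the effective dimension until one reaches a one-dimensional object'' does not correspond to this mechanism and, as described, comes with no convergence or termination argument; as it stands, the singular Jensen inequality is not established.
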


In the above definition of $\Fcal$, the function $u|_{\partial \Omega} \in \Lrm^1(\partial \Omega,\Hcal^{d-1};\R^d)$ is the (inner) boundary trace of $u$ onto $\partial \Omega$, while $n_\Omega \colon \partial \Omega \to \Sbb^{d-1}$ is the boundary unit inner normal. If the boundary values of any admissible weakly* converging sequence are the same as the boundary values of the limit, then the boundary term may be omitted. The same is true if $f \geq 0$ since then we can only lose mass in the limit.

It follows from Reshetnyak's Continuity Theorem that the functional $\Fcal$ as defined above is the \enquote{correct} density extension to $\BD(\Omega)$ of the functional
\[
  \Fcal(u) := \int_\Omega f \bigl( x, \Ecal u \bigr) \dd x
    + \int_{\partial \Omega} f^\infty \bigl( x, u|_{\partial \Omega} \odot n_\Omega \bigr) \dd \Hcal^{d-1},
\]
defined for all $u \in \LD(\Omega)$, i.e.\ $u \in \BD(\Omega)$ with $E^s u = 0$. This statement is made precise in Corollary~\ref{cor:F_strictly_cont_ext}.

As immediate consequences of Theorem~\ref{thm:BD_lsc_teaser}, we can prove existence for some variational problems and show a relaxation theorem in $\BD(\Omega)$, see Corollaries~\ref{cor:min_existence} and~\ref{cor:relaxation}.

The strategy for the proof hinges on an idea that was first used in~\cite{Rind10?LSYM} to re-prove the standard lower semicontinuity theorem in the space $\BV$ of functions of bounded variation (see~\cite{AmbDal92RBVQ,FonMul93RQFB}) without Alberti's Rank-One Theorem~\cite{Albe93ROPD}. While still employing the celebrated blow-up technique of Fonseca and M\"{u}ller~\cite{FonMul92QCIL}, the proof in~\cite{Rind10?LSYM} replaces Alberti's Theorem with a rigidity result about solutions to the (under-determined) differential inclusion
\begin{equation}  \label{eq:gradient_DI}
  \nabla v \in \spn\{P\}  \qquad\text{pointwise a.e.,}\qquad  u \in \Wrm_\loc^{1,1}(\R^d;\R^m),
\end{equation}
where $P \in \R^{m \times d}$ is a \emph{fixed} matrix. While for $\BV$ this strategy merely provides a new proof of a known result, in $\BD$ we do not have an Alberti-type theorem at our disposal, and so we need to rely on this new approach in order to prove a general lower semicontinuity theorem.

The key point about Alberti's Theorem is that it provides us with crucial information about blow-ups of BV-functions at singular points. More precisely, this fundamental result ascertains that for $u \in \BV(\Omega;\R^m)$ we have
\[
  \rank \Bigl( \frac{\di D^s u}{\di \abs{D^s u}}(x_0) \Bigr) \leq 1
  \qquad\text{for $\abs{D^s u}$-almost every $x_0 \in \Omega$.}
\]
This allows us to conclude that at such points $x_0$, every blow-up limit can be written as a function which depends only on $x \cdot \xi$ for some direction $\xi \in \Sbb^{d-1}$ (in fact it is the same $\xi$ as in $\frac{\di D^s u}{\di \abs{D^s u}}(x_0) = a \otimes \xi$). The blow-up limit needs to be averaged in order to achieve affine boundary conditions for the application of quasiconvexity, and without the one-directionality of the blow-ups this would incur jumps over the gluing boundaries, which destroy the argument.

The central new observation in~\cite{Rind10?LSYM} is that all blow-ups at points $x_0$ where $\rank \bigl( \frac{\di D^s u}{\di \abs{D^s u}}(x_0) \bigr) \geq 2$ must in fact be affine, so we may apply quasiconvexity in this case as well (we do not even need the additional averaging step). This was called a \enquote{rigidity} argument, because at its heart is the phenomenon that all solutions to the differential inclusion~\eqref{eq:gradient_DI} have a very special structure, and hence we are in a \enquote{rigid} situation.

In $\BD(\Omega)$ the strategy is roughly similar, but faces the additional complication that the rigidity is much weaker: The natural distinction is whether $\frac{\di E^s u}{\di \abs{E^s u}}(x_0)$ can be written as a symmetric tensor product $a \odot b := (a \otimes b + b \otimes a)/2$ for some $a,b \in \R^d$ or not. However, in contrast to the gradient case it turns out that
\[
  \Ecal u = \frac{1}{2} \bigl( \nabla u + \nabla u^T \bigr) \in \spn\{P\}  \qquad\text{pointwise a.e.,}\qquad  u \in \LD_\loc(\R^d),
\]
where the fixed matrix $P \in \R_\sym^{d \times d}$ cannot be written in the form $a \odot b$, does \emph{not} imply that $u$ is affine (see Example~\ref{ex:weak_rigidity}). In particular, blow-ups $v$ of $u$ at points $x_0$ where $\frac{\di E^s u}{\di \abs{E^s u}}(x_0) \neq a \odot b$ for all $a,b \in \R^d$, do not necessarily have a constant multiple of Lebesgue measure as its symmetrized derivative $v$. Using Fourier Analysis and an ellipticity argument, it is however possible to show that $Ev$ is \emph{absolutely continuous} with respect to Lebesgue measure, and as regards blow-ups \enquote{an $\Lcal^d$-absolutely continuous measure is as good as a constant multiple of $\Lcal^d$}. This is so, because we may take a blow-up of the blow-up, which still is a blow-up to the original function (this will be used in the form that tangent measures to tangent measures are tangent measures), and this particular blow-up now indeed has a constant multiple of Lebesgue measure as symmetrized derivative, hence it is affine.

On the other hand, at points $x_0 \in \Omega$ where $\frac{\di E^s u}{\di \abs{E^s u}}(x_0) = a \odot b$ for some $a,b \in \R^d \setminus \{0\}$ with $a \neq b$, it turns out that the symmetrized derivative of any blow-up is the sum of a measure invariant under translations orthogonal to both $a$ and $b$, and possibly an absolutely continuous part with linear density. If this linear part is non-zero, we can use the same \enquote{iterated blow-up trick} mentioned before to get an affine blow-up, so we are again in the above case. If the linear part is zero, we can show that the blow-up limit is the sum of two one-directional functions (depending only on $x \cdot a$ and $x \cdot b$, respectively), and so again we have a well-behaved blow-up limit at our disposal, which may then be averaged (using parallelotopes with face normals $a$ and $b$ instead of the usual cubes) to get an affine function. The case $\frac{\di E^s u}{\di \abs{E^s u}}(x_0) = a \odot a$ for some $a \in \R^d \setminus \{0\}$ is somewhat degenerate, but can also be treated with essentially the same methods (in this case, the remainder is not necessarily linear, not even smooth, but still vanishes in a second blow-up). The pivotal Theorem~\ref{thm:good_blowups} details the construction of good blow-ups and can be considered the core of the present work.

Having thus arrived at an affine function in all of the above cases, we can apply the symmetric-quasiconvexity locally. Figure~\ref{fig:singular_blowups} (p.~\pageref{fig:singular_blowups}) gives an overview over the blow-up contruction whereas Figure~\ref{fig:averaging} (p.~\pageref{fig:averaging}) shows the averaging procedure. The case of two space dimensions is explored in greater detail in Section~\ref{ssc:rigidity_2D} to provide a few more concrete results and examples, even though this is not needed elsewhere.

Like in~\cite{Rind10?LSYM}, the proof is set in the framework of generalized Young measures (or DiPerna--Majda measures), as presented in~\cite{KriRin10CGGY}, the original idea is in~\cite{DiPMaj87OCWS,AliBou97NUIG}. We prove localization principles for Young measures in terms of so-called regular and singular \emph{tangent Young measures}, which encapsulate the blow-up process and contain local information about the Young measure under investigation at the blow-up point, see Propositions~\ref{prop:localize_reg},~\ref{prop:localize_sing}.

Young measures allow to express (the effect of) quasiconvexity locally in a very concise way, namely as Jensen-type inequalities, see Theorem~\ref{thm:BDY_Jensen} for a precise statement. Having established these with the aid of the construction of good blow-ups, the final step to conclude lower semicontinuity in $\BD$ is essentially a straightforward computation (see Theorem~\ref{thm:BD_lsc}). The final Section~\ref{sc:concl_remarks} contains some further remarks on why the use of Young measures (as opposed to a more elementary presentation) is advantageous in this work.

The paper is organized as follows: After fixing notation and proving some auxiliary results in Section~\ref{sc:setup}, the localization principles are the topic of Section~\ref{sc:localization}. Then, Section~\ref{sc:good_blowups} is devoted to proving the existence of good blow-ups and to investigate in more detail some differential inclusions involving $\Ecal u$ in two space dimensions. After the proof of the Jensen-type inequalities in Section~\ref{sc:Jensen}, finally in Section~\ref{sc:lsc} we establish the lower semicontinuity and relaxation theorems and state an existence result for minimizers of variational problems in $\BD(\Omega)$. We end with concluding remarks in Section~\ref{sc:concl_remarks}, and for the convenience of the reader in an appendix we give in full detail (and our notation) Preiss' existence proof for non-zero tangent measures.

\section*{Acknowledgements}

The author wishes to extend many thanks to Jan Kristensen for numerous stimulating discussions related to the topic of this paper and for reading preliminary versions of the manuscript. He is also indebted to Robert V.\ Kohn for a hardcopy of his PhD thesis. The support of the Oxford Centre for Nonlinear PDE (OxPDE) through the EPSRC Science and Innovation award to OxPDE (EP/E035027/1) is gratefully acknowledged. The results in this paper are part of the author's DPhil thesis at the University of Oxford.

\section{Setup and auxiliary results} \label{sc:setup}

\subsection{Notation and linear algebra}

In all of the following, $d \in \N$ will be the number of space dimensions, which we consider fixed. By $B(x_0,r)$ we denote the open ball around $x_0 \in \R^d$ with radius $r > 0$, the open unit ball in $\R^d$ is $\Bbb^d$, its volume is $\omega_d$, and $\Sbb^{d-1}$ is the unit sphere. By $\Omega$ we designate a generic open set in $\R^d$ on which no boundedness or boundary regularity is assumed, unless otherwise stated.

We equip the space $\R^{d \times d}$ of $(d \times d)$-dimensional square matrices with the Frobenius norm $\abs{A} := \sqrt{\sum_{i,j} (A_j^i)^2} = \sqrt{\trace(A^T A)}$ (the Euclidean norm in $\R^{d^2}$), where $A_j^i$ denotes the entry of $A$ in the $i$th row and $j$th column. The Frobenius norm is generated by the scalar product $A : B := \sum_{i,j} A_j^i B_j^i$, under which the space $\R^{d \times d}$ becomes a (real) Hilbert space. By $\R_\sym^{d \times d}$ and $\R_\skw^{d \times d}$ we denote the subspaces of symmetric and skew-symmetric matrices, respectively.

The \term{tensor product} of vectors $a,b \in \R^d$ is $a \otimes b := ab^T$ and the \term{symmetric tensor product} is $a \odot b := (a \otimes b + b \otimes a)/2$. We record the following lemma about symmetric tensor products in $\R^{2 \times 2}$:

\begin{lemma} \label{lem:sym_tensor_prod}
Let $M \in \R_\sym^{2 \times 2}$ be a non-zero symmetic matrix.
\begin{itemize}
  \item[(i)] If $\rank M = 1$, then $M = \pm a \odot a = \pm a \otimes a$ for some vector $a \in \R^2$.
  \item[(ii)] If $\rank M = 2$, then $M = a \odot b$ for some vectors $a,b \in \R^2$ if and only if the two (non-zero) eigenvalues of $M$ have opposite signs.
\end{itemize}
\end{lemma}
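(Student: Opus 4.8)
Both parts are elementary linear algebra over $\R^2$; the plan is to use the spectral theorem for symmetric matrices throughout, together with the explicit formula $\det(a \odot b) = -\tfrac14\,(a_1 b_2 - a_2 b_1)^2$, which I would establish by a direct $2 \times 2$ computation. The only mildly delicate point is organizing the "only if" direction of (ii) so that one extracts sign information about the eigenvalues from the determinant.

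\textbf{Part (i).} Suppose $\rank M = 1$. Since $M$ is symmetric, $\R^2 = \ker M \oplus (\ker M)^\perp$ and $\ran M = (\ker M)^\perp$; as $\rank M = 1$, the space $(\ker M)^\perp$ is spanned by a single unit vector $v$, and $Mv \in \ran M = \spn\{v\}$, so $Mv = \lambda v$ with $\lambda \neq 0$. Consequently $M = \lambda\, v \otimes v$. Setting $a := \sqrt{\abs{\lambda}}\, v$ gives $M = \operatorname{sgn}(\lambda)\, a \otimes a = \pm\, a \otimes a$, and since $a \otimes a$ is already symmetric, $a \otimes a = a \odot a$. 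This proves the claim.

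\textbf{Part (ii), "if".} Assume the nonzero eigenvalues $\lambda_1, \lambda_2$ of $M$ have opposite signs, say $\lambda_1 > 0 > \lambda_2$, with orthonormal eigenvectors $v_1, v_2$, so $M = \lambda_1\, v_1 \otimes v_1 + \lambda_2\, v_2 \otimes v_2$. Write $\lambda_1 = \mu^2$ and $\lambda_2 = -\nu^2$ with $\mu, \nu > 0$, and put $a := \mu v_1 + \nu v_2$, $b := \mu v_1 - \nu v_2$. Expanding $a \otimes b + b \otimes a$ and using $v_i \otimes v_i$ symmetry, the cross terms $v_1 \otimes v_2$ and $v_2 \otimes v_1$ cancel, leaving $a \otimes b + b \otimes a = 2\mu^2\, v_1 \otimes v_1 - 2\nu^2\, v_2 \otimes v_2$, i.e.\ $a \odot b = M$.

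\textbf{Part (ii), "only if".} Suppose $M = a \odot b$ with $\rank M = 2$. A direct computation with $a = (a_1,a_2)$, $b = (b_1,b_2)$ gives
\[
  \det(a \odot b) = a_1 a_2 b_1 b_2 - \tfrac14 (a_1 b_2 + a_2 b_1)^2 = -\tfrac14 (a_1 b_2 - a_2 b_1)^2 \leq 0.
\]
Since $\rank M = 2$ forces $\det M \neq 0$, we get $\det M < 0$, hence $\lambda_1 \lambda_2 = \det M < 0$, so the two eigenvalues have opposite signs. I expect no real obstacle here; the main thing to get right is the sign bookkeeping in the determinant identity, and noting that rank $2$ is exactly what promotes the inequality $\det M \le 0$ to the strict $\det M < 0$ needed to conclude opposite signs.
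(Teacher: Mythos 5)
Your proof is correct, but it takes a genuinely different (and arguably cleaner) route than the paper's. For the ``only if'' direction of (ii), the paper diagonalizes $M$ by an orthogonal conjugation, writes out $a \odot b = \operatorname{diag}(\lambda_1,\lambda_2)$ componentwise, and extracts the sign condition from the off-diagonal equation $a_1 b_2 + a_2 b_1 = 0$; you instead prove the identity $\det(a \odot b) = -\tfrac14 (a_1 b_2 - a_2 b_1)^2 \leq 0$ once and for all, and then rank $2$ immediately forces $\det M = \lambda_1\lambda_2 < 0$. Your determinant identity is the more reusable fact (it also shows directly that full-rank symmetric tensor products are exactly the indefinite ones), while the paper's componentwise computation has the minor advantage of not requiring any separate algebraic identity. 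For the ``if'' direction, the paper constructs explicit vectors $a = (\gamma,1)$, $b = (\lambda_1\gamma^{-1},\lambda_2)$ in the diagonalizing coordinates with $\gamma = \sqrt{-\lambda_1/\lambda_2}$, whereas your choice $a = \mu v_1 + \nu v_2$, $b = \mu v_1 - \nu v_2$ (with $\lambda_1 = \mu^2$, $\lambda_2 = -\nu^2$) is coordinate-free and makes the cancellation of cross terms transparent. For (i), the paper factors a rank-one matrix as $c \otimes d$ and uses symmetry to show $c \parallel d$, while you invoke the spectral theorem; both are standard. No gaps; all your computations check out.
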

\begin{proof}
\proofstep{Ad (i).} Every rank-one matrix $M$ can be written as a tensor product $M = c \otimes d$ for some vectors $c,d \in \R^2 \setminus \{0\}$. By the symmetry, we get $c_1 d_2 = c_2 d_1$, which implies that the vectors $c$ and $d$ are multiples of each other. We therefore find $a \in \R^2$ with $M = \pm a \otimes a$.

\proofstep{Ad (ii).} Assume first that $M = a \odot b$ for some vectors $a,b \in \R^2$ and take an orthogonal matrix $Q \in \R^{2 \times 2}$ such that $QMQ^T$ is diagonal. Moreover,
\[
  QMQ^T = \frac{1}{2}Q \bigl( a \otimes b + b \otimes a \bigr) Q^T
  = \frac{1}{2} \bigl( Qa \otimes Qb + Qb \otimes Qa \bigr) = Qa \odot Qb,
\]
whence we may always assume without loss of generality that $M$ is already diagonal,
\[
  a \odot b = M = \begin{pmatrix} \lambda_1 &  \\  & \lambda_2 \end{pmatrix},
\]
where $\lambda_1,\lambda_2 \neq 0$ are the two eigenvalues of $M$. Writing this out componentwise, we get
\[
  a_1 b_1 = \lambda_1, \qquad a_2 b_2 = \lambda_2, \qquad a_1 b_2 + a_2 b_1 = 0.
\]
As $\lambda_1, \lambda_2 \neq 0$, also $a_1, a_2, b_1, b_2 \neq 0$, and hence
\[
  0 = a_1 b_2 + a_2 b_1 = \frac{a_1}{a_2} \lambda_2 + \frac{a_2}{a_1} \lambda_1.
\]
Thus, $\lambda_1$ and $\lambda_2$ must have opposite signs.

For the other direction, by transforming as before we may assume again that $M$ is diagonal, $M = \Bigl( \begin{smallmatrix} \lambda_1 &  \\ & \lambda_2 \end{smallmatrix} \Bigr)$, and that $\lambda_1$ and $\lambda_2$ do not have the same sign. Then, with $\gamma := \sqrt{-\lambda_1/\lambda_2}$,
we define
\[
  a := \begin{pmatrix} \gamma \\ 1 \end{pmatrix},  \qquad
  b := \begin{pmatrix} \lambda_1 \gamma^{-1} \\ \lambda_2 \end{pmatrix}.
\]
For $\lambda_1 > 0$, $\lambda_2 < 0$ say (the other case is analogous),
\[
  \lambda_1 \gamma^{-1} + \lambda_2 \gamma
  = \lambda_1 \sqrt{\frac{\abs{\lambda_2}}{\lambda_1}} - \abs{\lambda_2} \sqrt{\frac{\lambda_1}{\abs{\lambda_2}}}
  = 0,
\]
and therefore
\[
  a \odot b
  = \frac{1}{2} \begin{pmatrix} \lambda_1 & \lambda_2 \gamma \\ \lambda_1 \gamma^{-1} & \lambda_2 \end{pmatrix}
  + \frac{1}{2} \begin{pmatrix} \lambda_1 & \lambda_1 \gamma^{-1} \\ \lambda_2 \gamma & \lambda_2 \end{pmatrix}
  = M.
\]
This proves the claim.
\end{proof}


\subsection{Measure theory}

In the following, we briefly gather some of the notions from measure theory employed in this paper. More information can for example be found in~\cite{FonLeo07MMCV,AmFuPa00FBVF,Matt95GSME}.

The space $\Mbf_\loc(\R^d;\R^N)$ contains all $\R^N$-valued set functions that are defined on the relatively compact Borel subsets, and that are $\sigma$-additive and finite when restricted to the Borel $\sigma$-algebra on a compact subset of $\R^d$. We call its elements vector-valued \term{local (Radon) measures}. Most often, in the previous notation $\R^N$ is just as a placeholder for \enquote{$\R^{d \times d}$}. The subspace $\Mbf(\R^d;\R^N)$ contains all vector-valued \term{finite (Radon) measures} on the Borel $\sigma$-algebra on $\R^d$ with values in $\R^N$. Positive measures are contained in the analogous spaces $\Mbf_\loc^+(\R^d)$ and $\Mbf^+(\R^d)$, respectively. A probability measure is a positive measure $\mu \in \Mbf^+(\R^d)$ with $\mu(\R^d) = 1$, we write $\mu \in \Mbf^1(\R^d)$. We will also employ the spaces $\Mbf(V;\R^N)$, $\Mbf^+(V;\R^N)$, $\Mbf^1(V;\R^N)$ with a bounded Borel set $V \subset \R^d$ replacing $\R^d$; all of the following statements, with the appropriate adjustments, also hold for these spaces.

For every local measure $\mu \in \Mbf_\loc(\R^d;\R^N)$, we denote by $\abs{\mu} \in \Mbf_\loc^+(\R^d)$ its \term{total variation measure}. The restriction of a (local) measure $\mu \in \Mbf_\loc(\R^d;\R^N)$ to a Borel set $A \subset \R^d$ is written as $\mu \restrict A$ and defined by $(\mu \restrict A)(B) := \mu(B \cap A)$ for all relatively compact Borel sets $B \subset \R^d$. For a positive measure $\mu \in \Mbf_\loc^+(\R^d)$, the support $\supp \mu$ is the set of all $x \in \R^d$ such that $\mu(B(x,r)) > 0$ for all $r > 0$, which is always a closed set. For a vector measure $\mu \in \Mbf_\loc(\R^d;\R^N)$, the support of $\mu$ is simply the support of $\abs{\mu}$.

Lebesgue measure in $\R^d$ is denoted by $\Lcal^d$, sometimes augmented to $\Lcal_x^d$ to give a name to the integration variable. For a Lebesgue-measurable set $A \subset \R^d$, we will often simply write $\abs{A}$ instead of $\Lcal^d(A)$. The symbol $\Hcal^k$ stands for the $k$-dimensional Hausdorff outer measure, $k \in [0,\infty)$. When restricted to a $\Hcal^k$-rectifiable set $S \subset \R^d$ (see Section~2.9 of~\cite{AmFuPa00FBVF}, we only need the fact that Lipschitz boundaries are $\Hcal^k$-rectifiable), $\Hcal^k \restrict S$ is a local Radon measure.

The pairing $\dpr{f,\mu}$ between a Borel measurable function $f \colon \R^d \to \R^N$ and a positive measure $\mu \in \Mbf^+(\R^d)$, or, if $f$ has compact support also with $\mu \in \Mbf_\loc^+(\R^d)$, is defined as
\[
  \dpr{f,\mu} := \int f \dd \mu   \quad\text{($\in \R^N$),}
\]
provided this integral exists.

Every measure $\mu \in \Mbf_\loc(\R^d;\R^N)$ has a (unique) \term{Lebesgue--Radon--Nikod\'{y}m decomposition} $\mu = \frac{\di \mu}{\di \lambda} \lambda + \mu^s$ with respect to a positive measure $\lambda \in \Mbf_\loc^+(\R^d)$, i.e.\
\[
  \mu(A) = \int_A \frac{\di \mu}{\di \lambda} \dd \lambda + \mu^s(A)
\]
for all relatively compact Borel sets $A \subset \R^d$. In this decomposition, $\mu^s$ and $\lambda$ are mutually singular, i.e.\ concentrated on mutually negligible sets. The function $\frac{\di \mu}{\di \lambda} \in \Lrm_\loc^1(\R^d,\lambda;\R^N)$ is called the \term{density} of $\mu$ with respect to $\lambda$ and may be computed by
\[
  \frac{\di \mu}{\di \lambda}(x_0) = \lim_{r \todown 0} \frac{\mu(B(x_0,r))}{\lambda(B(x_0,r))}
    \qquad\text{for $\lambda$-a.e.\ $x_0 \in \supp \lambda$.}
\]
If not otherwise specified, $\mu^s$ will always mean the singular part of the measure $\mu$ with respect to Lebesgue measure. The function $\frac{\di \mu}{\di \abs{\mu}} \in \Lrm_\loc^1(\R^d,\abs{\mu};\R^N)$ is called the \term{polar function} of $\mu$ and satisfies $\absb{\frac{\di \mu}{\di \abs{\mu}}(x)} = 1$ at $\abs{\mu}$-almost every $x \in \R^d$.

Several times we will employ the \term{pushforward} $T_* \mu := \mu \circ T^{-1} \in \Mbf_\loc(\R^d;\R^N)$ of a local measure $\mu \in \Mbf_\loc(\R^d;\R^N)$ under an affine map $T \colon \R^d \to \R^d$, $x \mapsto x_0 + L x$, where $x_0 \in \R^d$ and $L \in \R^{d \times d}$ is an invertible matrix (of course, pushforwards are defined for more general $T$, but we will not need those). For a measurable function $f \colon \R^d \to \R^N$ and $\mu \in \Mbf_\loc^+(\R^d)$, we have the transformation rule
\[
  \dprb{f, T_*\mu} = \int f \dd (T_*\mu) = \int f \circ T \dd \mu = \dprb{f \circ T, \mu}
\]
provided one, hence both, of these integrals are well-defined. Also, with $\det T := \det L$, we have the following formulas for densities:
\begin{equation} \label{eq:pushforward_density_trafo}
  \frac{\di T_*\mu}{\di \Lcal^d} = \abslr{\det T}^{-1}\frac{\di \mu}{\di \Lcal^d} \circ T^{-1},  \qquad
  \frac{\di T_*\mu}{\di \abs{T_*\mu}} = \frac{\di \mu}{\di \abs{\mu}} \circ T^{-1}.
\end{equation}
Mostly, we will use pushforwards under the blow-up transformation $T^{(x_0,r)}(x) := (x-x_0)/r$, where $x_0 \in \R^d$ and $r > 0$. For this particular transformation we have $\abs{\det T^{(x_0,r)}}^{-1} = r^d$.

The preceding spaces of measures have several different notions of convergence that are relevant for our theory: The \term{norm (or strong) convergence} of a sequence $(\mu_j) \subset \Mbf(\R^d;\R^N)$ to $\mu \in \Mbf(\R^d;\R^N)$ means that $\abs{\mu_j - \mu}(\R^d) \to 0$.

By the Riesz Representation Theorem, we may consider $\Mbf_\loc(\R^d;\R^N)$ as the dual space to the locally convex space $\Crm_c(\R^d;\R^N)$, and $\Mbf(\Omega;\R^N)$ as the dual space to the Banach space $\Crm_0(\Omega;\R^N)$. These dualities induce the \term{(local) weak* convergence} $\mu_j \toweakstar \mu$ in $\Mbf_\loc(\R^d;\R^N)$ defined as $\dpr{\psi,\mu_j} \to \dpr{\psi,\mu}$ (in $\R^N$) for all $\psi \in \Crm_c(\R^d)$ as well as the \term{weak* convergence} $\mu_j \toweakstar \mu$ in $\Mbf(\Omega;\R^N)$ meaning $\dpr{\psi,\mu_j} \to \dpr{\psi,\mu}$ for all $\psi \in \Crm_0(\Omega)$. Both convergences (we only work with convergences here, not with topologies) have good compactness properties. In particular, every sequence $(\mu_j) \subset \Mbf_\loc(\R^d;\R^N)$ satisfying $\sup_j \abs{\mu_j}(K) < \infty$ for all compact $K \subset \R^d$ has a (locally) weakly* converging subsequence. Likewise, if for a sequence $(\mu_j) \subset \Mbf(\Omega;\R^N)$ we have $\sup_j \abs{\mu_j}(\Omega) < \infty$, then this sequence is (sequentially) weakly* relatively compact.

Finally, with the \term{area functional} $\langle\frarg\rangle \colon \Mbf(\Omega;\R^N) \to \R$, defined by 
\[
  \langle \mu \rangle := \int_\Omega \sqrt{1+\absB{\frac{\di \mu}{\di \Lcal^d}}^2} \dd x + \abs{\mu^s}(\Omega),
  \qquad \mu \in \Mbf(\Omega;\R^N),
\]
we define \term{$\langle\frarg\rangle$-strict convergence} in $\Mbf(\Omega;\R^N)$ to comprise $\mu_j \toweakstar \mu$ and $\langle \mu_j \rangle \to \langle \mu \rangle$, see~\cite{KriRin10RSIF,KriRin10CGGY} and also the Reshetnyak Continuity Theorem~\ref{thm:reshetnyak} for a discussion why $\langle\frarg\rangle$-strict convergence is important here. It can be shown (by mollification) that smooth measures are dense in $\Mbf(\Omega;\R^N)$ with respect to the $\langle\frarg\rangle$-strict convergence. Notice that by Reshetnyak's Continuity Theorem~\ref{thm:reshetnyak} below, $\langle\frarg\rangle$-strict convergence is stronger than the usual notion of strict convergence.

\subsection{Tangent measures} \label{ssc:Tan}

Tangent measures are a powerful tool in Geometric Measure Theory for investigating the local structure of Radon measures. In contrast to the previous work~\cite{Rind10?LSYM}, which employed the restricted notion of tangent measures from Section~2.7 in~\cite{AmFuPa00FBVF}, we here use Preiss' original definition~\cite{Prei87GMDR}. This has several advantages from a technical point of view (in particular, we can use the general theory for tangent measures), and is also the more elegant approach from the conceptual point of view. General information on tangent measures can for example be found in Chapter~14 of~\cite{Matt95GSME} and also in~\cite{Prei87GMDR}.

Let $T^{(x_0,r)}(x) := (x-x_0)/r$ for $x_0 \in \R^d$ and $r > 0$. For a vector-valued Radon measure $\mu \in \Mbf_\loc(\R^d;\R^N)$ and $x_0 \in \R^d$, a \term{tangent measure} to $\mu$ at $x_0$ is any weak* limit in the space $\Mbf_\loc(\R^d;\R^N)$ of the rescaled measures $c_n T^{(x_0,r_n)}_* \mu$ for some sequence $r_n \todown 0$ of radii and some strictly positive rescaling constants $c_n > 0$. The set of all such tangent measures is denoted by $\Tan(\mu,x_0)$ and the sequence $c_n T^{(x_0,r_n)}_* \mu$ is called a \term{blow-up sequence}. From the definition it follows that $\Tan(\mu,x_0) = \{0\}$ for all $x_0 \notin \supp \mu$. Preiss originally excluded the zero measure from $\Tan(\mu,x_0)$ explicitly, but for us it has some technical advantages to include it.

Is is a fundamental result of Preiss that the set $\Tan(\mu,x_0)$ contains non-zero measures at $\abs{\mu}$-almost every $x_0 \in \supp \mu$ (or, equivalently, at $\abs{\mu}$-almost every $x_0 \in \R^d$). This is proved in Theorem~2.5 of~\cite{Prei87GMDR}, but since this is the only result from Preiss' paper needed here, a fully-detailed proof is given in the appendix for the convenience of the reader.

One can show, see Remark~14.4~(i) of~\cite{Matt95GSME}, that for any non-zero $\tau \in \Tan(\mu,x_0)$ we may always choose the rescaling constants $c_n$ in the blow-up sequence $c_n T_*^{(x_0,r_n)} \mu \toweakstar \tau$ to be
\[
  c_n := c \bigl[ \mu(x_0 + r_n \cl{U}) \bigr]^{-1}
\]
for any bounded open set $U \subset \R^d$ containing the origin such that $\tau(U) > 0$, and some constant $c = c(U) > 0$. This involves passing to a (non-relabeled) subsequence if necessary.

A very special property of tangent measures is that at $\abs{\mu}$-almost every $x_0 \in \R^d$ and for all sequences $r_n \todown 0$, $c_n > 0$, it holds that
\begin{equation} \label{eq:Tan_abs}
  \tau = \wslim_{n\to\infty} c_n T_*^{(x_0,r_n)} \mu  \quad\text{if and only if}\quad
  \abs{\tau} = \wslim_{n\to\infty} c_n T_*^{(x_0,r_n)} \abs{\mu},
\end{equation}
which in particular implies
\begin{equation} \label{eq:Tan_density}
  \Tan(\mu,x_0) = \frac{\di \mu}{\di \abs{\mu}}(x_0) \cdot \Tan(\abs{\mu},x_0).
\end{equation}
See e.g.\ Theorem~2.44 in~\cite{AmFuPa00FBVF} for a proof (with a different definition for tangent measures; the proof, however, carries over).

If $\mu \in \Mbf_\loc^+(\R^d)$ is absolutely continuous with respect to a positive measure $\lambda \in \Mbf_\loc^+(\R^d)$, then $\Tan(\mu,x_0) = \Tan(\lambda,x_0)$ for $\lambda$-almost all $x_0 \in \R^d$. This fact is proved in Lemma~14.6 of~\cite{Matt95GSME} and is particularly powerful in conjunction with the following result, see Lemma~14.5 of~\cite{Matt95GSME}: For a Borel set $E \subset \R^d$, at all $\mu$-density points $x_0 \in \supp \mu$ of $E$, i.e.\ all points $x_0 \in \supp \mu$ such that
\[
  \lim_{r \todown 0} \frac{\mu(B(x_0,r) \setminus E)}{\mu(B(x_0,r))} = 0,
\]
it holds that
\[
  \Tan(\mu,x_0) = \Tan(\mu \restrict E,x_0).
\]
In particular, this relation holds for $\mu$-almost every $x_0 \in E$.

As an application, we can first cut off the singular part of an arbitrary measure $\mu \in \Mbf_\loc(\R^d;\R^N)$, then use the first fact on the remaining (absolutely continuous) part, and also~\eqref{eq:Tan_density}, to see
\[
  \Tan(\mu,x_0) = \setBB{ \alpha \, \frac{\di \mu}{\di \Lcal^d}(x_0) \, \Lcal^d }{ \alpha \in \R }
  \qquad \text{for $\Lcal^d$-a.e.\ $x_0 \in \R^d$.}
\]
In particular, at such $x_0$ there exists a sequence $r_n \todown 0$ satisfying
\[
  r_n^{-d} T_*^{(x_0,r_n)} \mu  \quad\toweakstar\quad  \frac{\di \mu}{\di \Lcal^d}(x_0) \, \Lcal^d
  \quad \in \Tan(\mu,x_0).
\]

The next fact, that tangent measures to tangent measures are again tangent measures, is very important for our theory and we state it explicitly as a lemma:

\begin{lemma} \label{lem:TanTan_eq_Tan}
Let $\mu \in \Mbf_\loc(\R^d;\R^N)$. For $\abs{\mu}$-almost every $x_0 \in \R^d$ and every $\tau \in \Tan(\mu,x_0)$, it holds that $\Tan(\tau,y_0) \subset \Tan(\mu,x_0)$ for all $y_0 \in \R^d$.
\end{lemma}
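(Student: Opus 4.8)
The plan is to use the well-known characterization of tangent measures via a diagonal/nested blow-up argument, which is essentially Theorem~14.16 in~\cite{Matt95GSME}; I will reproduce the argument in the present notation. Fix $\mu \in \Mbf_\loc(\R^d;\R^N)$. By Preiss' theorem, at $\abs{\mu}$-almost every $x_0$ the set $\Tan(\mu,x_0)$ contains a non-zero measure, and moreover one can arrange (again for $\abs{\mu}$-a.e.\ $x_0$) that the rescaling constants in any blow-up sequence are of the normalized form $c_n = c\,[\mu(x_0 + r_n\cl U)]^{-1}$; so it suffices to treat such $x_0$. Let $\tau \in \Tan(\mu,x_0)$, say $\tau = \wslim_n c_n T_*^{(x_0,r_n)}\mu$ with $r_n \downarrow 0$ and $c_n > 0$, and let $y_0 \in \R^d$ and $\sigma \in \Tan(\tau,y_0)$, say $\sigma = \wslim_k b_k T_*^{(y_0,s_k)}\tau$ with $s_k \downarrow 0$, $b_k > 0$.

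The key computation is that blow-up transformations compose affinely: a direct check from the definitions gives
\[
  T^{(y_0,s)} \circ T^{(x_0,r)} = T^{(x_0 + r y_0,\, r s)},
\]
and hence, using the pushforward transformation rule, $T_*^{(y_0,s)} \bigl( T_*^{(x_0,r)} \mu \bigr) = T_*^{(x_0 + r y_0,\, r s)} \mu$. Therefore $b_k c_n T_*^{(x_0 + r_n y_0,\, r_n s_k)}\mu = b_k T_*^{(y_0,s_k)} \bigl( c_n T_*^{(x_0,r_n)}\mu \bigr)$. The idea is now to pick, for each $k$, an index $n = n(k)$ so large that the inner measure $c_{n(k)} T_*^{(x_0,r_{n(k)})}\mu$ is weak*-close to $\tau$ on the ball $B(0,1/s_k)$ (so that after the further rescaling by $T^{(y_0,s_k)}$ and $b_k$ it is close to $b_k T_*^{(y_0,s_k)}\tau$), and simultaneously $r_{n(k)} s_k \to 0$; a standard metrizability argument for weak*-convergence of locally finite measures on the separable space $\Crm_c(\R^d;\R^N)$ makes this diagonal choice possible. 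Then
\[
  b_k c_{n(k)} \, T_*^{\bigl(x_0 + r_{n(k)} y_0,\; r_{n(k)} s_k\bigr)} \mu \; \toweakstar \; \sigma.
\]
The centres $x_0 + r_{n(k)} y_0 \to x_0$, which forces a final perturbation step.

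To conclude I would absorb the drifting centres: comparing $T_*^{(x_0 + r_{n(k)} y_0,\, \rho_k)}\mu$ with $T_*^{(x_0,\rho_k)}\mu$ where $\rho_k := r_{n(k)} s_k$, the two differ by a translation by $y_0 \rho_k / \rho_k = y_0$... wait, more precisely the translation is by the vector $(x_0 + r_{n(k)} y_0 - x_0)/\rho_k = y_0/s_k$, which is not small; so instead one should note that the centre sequence is allowed to vary in the definition only if we invoke the standard fact (Matt95, Ch.~14) that tangent measures are unchanged under such vanishing centre shifts, OR — cleaner — one arranges the diagonal extraction so that the relevant test-function estimates are performed directly for the shifted blow-up $b_k c_{n(k)} T_*^{(x_0,\rho_k)}\mu$ and one controls the error coming from replacing $x_0$ by $x_0 + r_{n(k)} y_0$ by continuity of translation in $\Crm_c$. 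Either way, after this we obtain a genuine blow-up sequence of $\mu$ at $x_0$ converging weak* to $\sigma$, i.e.\ $\sigma \in \Tan(\mu,x_0)$. The main obstacle, and the only genuinely delicate point, is organizing the diagonal extraction so that all three requirements — weak*-proximity of the inner measure to $\tau$ on an exhausting sequence of balls, $\rho_k \downarrow 0$, and the vanishing of the centre-shift error — hold simultaneously; this is where the normalized form of the constants $c_n$ from Section~\ref{ssc:Tan} and the local uniform bounds $\sup_n (c_n T_*^{(x_0,r_n)}\abs\mu)(K) < \infty$ (which follow from $\tau$ being a locally finite limit) are used to keep everything quantitative.
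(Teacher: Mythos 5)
Your composition formula $T^{(y_0,s)} \circ T^{(x_0,r)} = T^{(x_0+ry_0,\,rs)}$ and the diagonal extraction are fine, but the proof has a genuine gap at exactly the point you flag as ``the only genuinely delicate point'': the recentering. The composed blow-ups are centred at the moving points $x_0 + r_{n(k)}y_0$, and neither of your two proposed fixes can repair this. The shift is \emph{not} vanishing in the relevant sense: measured in the blow-up coordinates at scale $\rho_k = r_{n(k)}s_k$, it is $y_0/s_k \to \infty$, so the measures $b_kc_{n(k)}T_*^{(x_0+r_{n(k)}y_0,\rho_k)}\mu$ and $b_kc_{n(k)}T_*^{(x_0,\rho_k)}\mu$ test a given $\phi \in \Crm_c$ against portions of $\mu$ living in regions that are far apart at scale $\rho_k$; no continuity-of-translation argument makes their difference small. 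This is not a technicality: the conclusion of the lemma genuinely fails at exceptional points, so any argument that works pointwise at every $x_0 \in \supp\mu$ (yours uses the a.e.\ hypothesis only to normalize the constants $c_n$, which is not where it is needed) must be wrong somewhere.

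The missing ingredient is the actual content of Theorem~14.16 in~\cite{Matt95GSME}, which the paper simply cites rather than reproves: one first shows, by a separate measure-theoretic argument, that for $\abs{\mu}$-a.e.\ $x_0$ the set $\Tan(\mu,x_0)$ is closed under the operations $\tau \mapsto c\,T_*^{(z,r)}\tau$ for $z \in \supp\tau$ and $r>0$. That proof runs through a countable dense family of test functions, defines the corresponding ``bad sets'', and uses that $\mu$-a.e.\ point of a Borel set is a $\mu$-density point of it; this is where the a.e.\ restriction genuinely enters. Once that closedness is known, each $b_kT_*^{(y_0,s_k)}\tau$ already lies in $\Tan(\mu,x_0)$ (with centre $x_0$, no drift), and a metrizability-plus-diagonal argument of the kind you describe then yields $\sigma \in \Tan(\mu,x_0)$. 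You would also need to dispose of the case $y_0 \notin \supp\tau$ separately, where $\Tan(\tau,y_0)=\{0\}$ and the claim holds because the paper's convention includes the zero measure in $\Tan(\mu,x_0)$ — this is precisely the remark the paper appends to its citation. As written, your argument establishes only that $\sigma$ is a limit of rescalings of $\mu$ about a sequence of centres converging to $x_0$, which is strictly weaker than membership in $\Tan(\mu,x_0)$.
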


A proof of this can be found in Theorem~14.16 of~\cite{Matt95GSME}. Note that since we imposed that $\Tan(\mu,x_0)$ contains the zero-measure for \emph{every} $x_0 \in \R^d$, in the statement above we can allow $y_0$ arbitrary instead of just from $\supp \tau$ as in \emph{loc.~cit.}

\subsection{Functions of bounded deformation}  \label{ssc:BD}

From now on, let $\Omega$ be an open domain with Lipschitz boundary (some extensions to non-Lipschitz $\Omega$ are possible, but we omit details here for simplicity). For a function $u \in \Wrm_\loc^{1,1}(\Omega;\R^d)$ define the \term{symmetrized gradient} (or \term{deformation tensor}) via
\begin{equation} \label{eq:sym_grad}
  \Ecal u := \frac{1}{2} \bigl( \nabla u + \nabla u^T \bigr),  \qquad \Ecal u \in \Lrm_\loc^1(\Omega;\R_\sym^{d \times d}).
\end{equation}
The space $\BD(\Omega)$ of \term{functions of bounded deformation} is the space of functions $u \in \Lrm^1(\Omega;\R^d)$ such that the distributional \term{symmetrized derivative}
\[
  Eu = \frac{1}{2} \bigl( Du + Du^T \bigr)
\]
is (representable as) a finite Radon measure, $Eu \in \Mbf(\Omega;\R_\sym^{d \times d})$. The space $\BD(\Omega)$ is a Banach space under the norm
\[
  \norm{u}_{\BD(\Omega)} := \norm{u}_{\Lrm^1(\Omega;\R^d)} + \abs{Eu}(\Omega).
\]
Of course, technically we work with equivalence classes of functions equal almost everywhere, but this will be mostly implicit.

We split $Eu$ according to the Lebesgue--Radon--Nikod\'{y}m decomposition
\[
  Eu = E^a u + E^s u = \Ecal u \, \Lcal^d + E^s u,
\]
where (in analogy to before) $\Ecal u = \frac{\di Eu}{\di \Lcal^d} \in \Lrm^1(\Omega;\R_\sym^{d \times d})$ denotes the Radon--Nikod\'{y}m derivative of $Eu$ with respect to Lebesgue measure and $E^s u$ is singular. We call $\Ecal u$ the \term{approximate symmetrized gradient} (the reason for the word \enquote{approximate} can be found in Section~4 of~\cite{AmCoDa97FPFB}).

The subspace $\LD(\Omega)$ of $\BD(\Omega)$ consists of all $\BD$-functions such that $Eu$ is absolutely continuous with respect to Lebesgue measure (i.e.\ $E^s u = 0$). Note that even in this case one has to distinguish between the \emph{measure} $Eu = E^a u$ and its \emph{density} $\Ecal u$, in particular with respect to pushforwards, cf.~\eqref{eq:pushforward_density_trafo}. The space $\BD_\loc(\R^d)$ is the space of functions $u \in \Lrm_\loc^1(\R^d;\R^d)$ such that the restriction of $u$ to every relatively compact open subset $U \subset \R^d$ lies in $\BD(U)$.

Since there is no Korn inequality in $\Lrm^1$, see~\cite{Orns62NIDO,CoFaMa05NACE}, it follows that $\Wrm^{1,1}(\Omega;\R^d)$ is a proper subspace of $\LD(\Omega)$ and also that the space $\BV(\Omega;\R^d)$ of functions of bounded variation, i.e.\ the space of $\Lrm^1$-functions such that the distributional derivative is representable as a finite Radon measure (see~\cite{AmFuPa00FBVF}), is a proper subspace of $\BD(\Omega)$.

A \term{rigid deformation} is a skew-symmetric affine map $u \colon \R^d \to \R^d$, i.e.\ $u$ is of the form
\[
  u(x) = u_0 + Rx,  \qquad \text{where $u_0 \in \R^d$, $R \in \R_\skw^{d \times d}$.}
\]
The following lemma is well-known and will be used many times in the sequel, usually without mentioning. We reproduce its proof here, because the central formula~\eqref{eq:Wu_identity} will be of use later.

\begin{lemma} \label{lem:E_kernel}
The kernel of the linear operator $\Ecal \colon \Crm^1(\R^d;\R^d) \to \Crm(\R^d;\R_\sym^{d \times d})$ given in~\eqref{eq:sym_grad} is the space of rigid deformations.
\end{lemma}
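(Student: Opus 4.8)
The plan is to prove both inclusions. That every rigid deformation lies in $\ker \Ecal$ is immediate: if $u(x) = u_0 + Rx$ with $R \in \R_\skw^{d \times d}$, then $\nabla u \equiv R$, hence $\Ecal u = (R + R^T)/2 = 0$. The substance is the reverse inclusion, and the tool is the pointwise identity expressing the second derivatives of $u$ through first derivatives of its symmetrized gradient,
\begin{equation} \label{eq:Wu_identity}
  \partial_j \partial_k u^i = \partial_j (\Ecal u)_k^i + \partial_k (\Ecal u)_j^i - \partial_i (\Ecal u)_k^j,
  \qquad i,j,k \in \{1,\dots,d\},
\end{equation}
which one verifies by inserting $(\Ecal u)_j^i = \tfrac12(\partial_j u^i + \partial_i u^j)$ and cancelling the mixed terms in pairs via the symmetry of second partial derivatives (this is the classical strain-compatibility relation of linearized elasticity).

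Given $u \in \ker \Ecal$, i.e.\ $\Ecal u \equiv 0$, the right-hand side of \eqref{eq:Wu_identity} vanishes, so all second derivatives of $u$ are zero and $u$ must be affine. Since $u$ is only assumed $\Crm^1$, I would make the double differentiation rigorous by mollification: for a standard mollifier $\rho_\epsilon$ set $u_\epsilon := u \conv \rho_\epsilon \in \Crm^\infty$, note $\Ecal u_\epsilon = (\Ecal u) \conv \rho_\epsilon \equiv 0$ on any fixed ball once $\epsilon$ is small, apply \eqref{eq:Wu_identity} to the smooth function $u_\epsilon$ to obtain $\DD^2 u_\epsilon \equiv 0$ there, whence $u_\epsilon(x) = a_\epsilon + L_\epsilon x$ for some $a_\epsilon \in \R^d$, $L_\epsilon \in \R^{d \times d}$; the constraint $\Ecal u_\epsilon = 0$ forces $L_\epsilon + L_\epsilon^T = 0$. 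Letting $\epsilon \todown 0$, $u_\epsilon \to u$ locally uniformly (as $u$ is continuous), so on each ball $u$ agrees with an affine map $a + Lx$, and passing to the limit in $L_\epsilon + L_\epsilon^T = 0$ gives $L \in \R_\skw^{d \times d}$. These local affine representations coincide on overlaps, so $u(x) = a + Lx$ on all of $\R^d$ with $L$ skew-symmetric, i.e.\ $u$ is a rigid deformation.

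The only real obstacle is precisely this regularity gap: $u \in \Crm^1$ does not a priori admit second derivatives, so one cannot differentiate \eqref{eq:sym_grad} twice directly. The mollification above circumvents it cleanly; equivalently, one may read \eqref{eq:Wu_identity} in the sense of distributions and invoke that a continuous function all of whose distributional second derivatives vanish is affine. Everything else is elementary linear algebra.
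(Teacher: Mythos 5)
Your proof is correct and follows essentially the same route as the paper: the compatibility identity you use is, after writing $\partial_j\partial_k u^i = \partial_j(\Ecal u)^i_k + \partial_j(\Wcal u)^i_k$ with $\Wcal u := (\nabla u - \nabla u^T)/2$, exactly the paper's formula~\eqref{eq:Wu_identity}, and the paper likewise reads it in the sense of distributions to bridge the $\Crm^1$-regularity gap before concluding that $\nabla u$ is constant and hence $u$ is a rigid deformation.
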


\begin{proof}
It is obvious that $\Ecal u$ vanishes for a rigid deformation $u$.

For the other direction, let $u \in \Crm^1(\R^d;\R^d)$ with $\Ecal u \equiv 0$, and define
\[
  \Wcal u := \frac{1}{2} \bigl( \nabla u - \nabla u^T \bigr).
\]
Then, for all $i,j,k = 1,\ldots,d$, we have in the sense of distributions,
\begin{equation} \label{eq:Wu_identity}
\begin{aligned}
  \partial_k \Wcal u_j^i &= \frac{1}{2} \bigl( \partial_{kj} u^i - \partial_{ki} u^j \bigr) \\
  &= \frac{1}{2} \bigl( \partial_{jk} u^i + \partial_{ji} u^k \bigr)
    - \frac{1}{2} \bigl( \partial_{ij} u^k + \partial_{ik} u^j \bigr) \\
  &= \partial_j \Ecal u_k^i - \partial_i \Ecal u_k^j \equiv 0.
\end{aligned}
\end{equation}
As $\nabla u = \Ecal u + \Wcal u$, this entails that $\nabla u$ is a constant, hence $u$ is affine and it is clear that it in fact must be a rigid deformation.
\end{proof}

It is an easy consequence of the previous lemma that $u \in \BD_\loc(\R^d)$ with $Eu = A \Lcal^d$, where $A \in \R_\sym^{d \times d}$ is a fixed symmetric matrix, is an affine function. More precisely, $u(x) = u_0 + (A+R)x$ for some $u_0 \in \R^d$ and $R \in \R_\skw^{d \times d}$ skew-symmetric.

As notions of convergence in $\BD(\Omega)$ we have the \term{norm convergence}, the \term{weak* convergence} $u_j \toweakstar u$ in $\BD(\Omega)$ if $u_j \to u$ strongly in $\Lrm^1$ and $Eu_j \toweakstar Eu$ in the sense of finite measures, and the \term{$\langle\frarg\rangle$-strict convergence}, defined like weak* convergence, but additionally requiring that $\langle Eu_j \rangle(\Omega) \to \langle Eu \rangle(\Omega)$. If $\sup_j \norm{Eu_j}_{\BD(\Omega)} < \infty$, then there exists a weakly* converging subsequence.

In $\BD_\loc(\R^d)$ we let \term{weak* convergence} mean $u_j \to u$ in $\Lrm_\loc^1$ (i.e.\ in $\Lrm^1$ on all compact subsets of $\R^d)$ and $Eu_j \toweakstar Eu$ in $\Mbf_\loc(\R^d;\R^{d \times d})$. If $(u_j) \subset \BD_\loc(\R^d)$ and $\sup_j \norm{u_j}_{\BD(U)} < \infty$ for all relatively compact open $U \subset \R^d$, then there exists a weakly* converging subsequence.

Since $\Omega$ has a Lipschitz boundary, the \term{trace} $u|_{\partial \Omega}$ of $u$ onto $\partial \Omega$ is well-defined in the sense that there exists a bounded linear operator $u \mapsto u|_{\partial \Omega}$ mapping $\BD(\Omega)$ (surjectively) onto $\Lrm^1(\partial \Omega,\Hcal^{d-1};\R^d)$ (the space of $\Hcal^{d-1}$-integrable functions on $\partial \Omega$ with values in $\R^d$) that coincides with the natural trace for all $u \in \BD(\Omega) \cap \Crm(\Omega;\R^d)$, see Theorem~II.2.1 of~\cite{TemStr80FBD}.

If $u \in \BD(\Omega)$ with $u|_{\partial \Omega} = 0$, then we also have the Poincar\'{e} inequality
\[
  \norm{u}_{\BD(\Omega)} \leq C \abs{Eu}(\Omega),
\]
where $C = C(\Omega)$ only depends on the domain $\Omega$, see Proposition~II.2.4 in~\cite{Tema85MPP}. Moreover, it is shown for example in~\cite{TemStr80FBD} (or see Remark~II.2.5 of~\cite{Tema85MPP}) that for each $u \in \BD(\Omega)$ there exists a rigid deformation $r$ such that
\[
  \norm{u+r}_{\Lrm^{d/(d-1)}(\Omega;\R^d)} \leq C \abs{Eu}(\Omega),
\]
where again $C = C(\Omega)$.

More information on $\BD(\Omega)$ and applications can be found in~\cite{Tema85MPP,AmCoDa97FPFB,FucSer00VMPP} and also in~\cite{Suqu78ERSE,Suqu79EFEP,MaStCh79SPDP,Kohn79NEDT,TemStr80FBD,Kohn82NIED}.

\subsection{Integrands} \label{ssc:integrands}

For $f \in \Crm(\Omega \times \R^N)$, where $\Omega \subset \R^d$ is an open set, define the transformation
\[
  (Sf)(x,\hat{A}) := (1-\abs{\hat{A}})f \biggl( x, \frac{\hat{A}}{1-\abs{\hat{A}}} \biggr),
  \qquad\text{$x \in \Omega$, $\hat{A} \in \Bbb^N$.}
\]
Then $Sf \in \Crm(\Omega \times \Bbb^N)$, and we let
\begin{align*}
  \Ebf(\Omega;\R^N) := \setb{ f \in \Crm(\Omega \times \R^N) }{ &\text{$Sf$ extends into a bounded,} \\
  &\text{continuous function on $\cl{\Omega \times \Bbb^N}$} }.
\end{align*}
In particular, all $f \in \Ebf(\Omega;\R^N)$ have \term{linear growth at infinity}, that is there exists $M > 0$ such that
\[
  \abs{f(x,A)} \leq M(1+\abs{A})  \qquad\text{for all $x \in \cl{\Omega}$, $A \in \R^N$;}
\]
the smallest such $M$ is called the \term{linear growth constant} of $f$. Also, by definition, for each $f \in \Ebf(\Omega;\R^N)$ the limit
\begin{equation} \label{eq:f_infty_def}
  \qquad f^\infty(x,A) := \lim_{\substack{\!\!\!\! x' \to x \\ \!\!\!\! A' \to A \\ \; t \to \infty}}
    \frac{f(x',tA')}{t},  \qquad \text{$x \in \cl{\Omega}$, $A \in \R^N$,}
\end{equation}
exists and defines a positively $1$-homogeneous function (i.e.\ $f(x,\theta A) = \theta f(x,A)$ for all $\theta \geq 0$), called the \term{recession function} of $f$. The norm
\[
  \norm{f}_{\Ebf(\Omega;\R^N)} := \norm{Sf \colon \cl{\Omega \times \Bbb^N}}_\infty,
  \qquad f \in \Ebf(\Omega;\R^N)
\]
turns $\Ebf(\Omega;\R^N)$ into a Banach space.

More generally, for functions $h \colon \R^N \to \R$ with linear growth at infinity, we define the \term{generalized recession function} $h^\# \colon \R^N \to \R$ by
\begin{equation} \label{eq:f_sharp_def}
  h^\#(A) := \limsup_{\substack{\!\!\!\! A' \to A \\ \; t \to \infty}} \frac{h(tA')}{t},  \qquad A \in \R^N,
\end{equation}
which again is positively $1$-homogeneous. We also use the recession function $h^\infty$ as in~\eqref{eq:f_infty_def} (without $x$-dependence) if it is defined.

As shown in Lemma~2.3 of~\cite{AliBou97NUIG}, for an upper semicontinuous function $f \colon \Omega \times \R^N \to \R$ with linear growth at infinity, we may find a decreasing sequence $(f_k) \subset \Ebf(\Omega;\R^N)$ with
\[
  \inf_{k \in \N} f_k = \lim_{k \to \infty} f_k = f, \qquad
  \inf_{k \in \N} f_k^\infty = \lim_{k \to \infty} f_k^\infty = f^\#
  \qquad\text{(pointwise).}
\]
Furthermore, the linear growth constants of the $f_k$ can be chosen to be bounded by the linear growth constant of $f$.

The space $\Ebf_c(\R^d;\R^N)$ is defined similarly to $\Ebf(\Omega;\R^N)$, but additionally we require that for each element $f \in \Ebf_c(\R^d;\R^N) \subset \Crm(\R^d \times \R^N)$ there exists a compact set $K \subset \R^d$ such that $\supp f(\frarg,A) \subset K$ for all $A \in \R^N$. In this work, we will mostly employ the spaces $\Ebf(\Omega;\R_\sym^{d \times d})$ and $\Ebf_c(\R^d;\R_\sym^{d \times d})$, where $\R^N$ is replaced by $\R_\sym^{d \times d}$. Clearly, all the aforementioned results also hold for these spaces.

The following is a variant of the well-known Reshetnyak Continuity Theorem, see~\cite{Resh68WCCA} for the original version and the appendix of~\cite{KriRin10RSIF} for a proof of the present extension.

\begin{theorem}[Reshetnyak Continuity Theorem] \label{thm:reshetnyak}
Let $\mu_j,\mu \in \Mbf(\R^d;\R^N)$ and assume $\mu_j \to \mu$ with respect to the $\langle\frarg\rangle$-strict convergence. Then,
\begin{align*}
  &\int f \Bigl( x, \frac{\di \mu_j}{\di \Lcal^d}(x) \Bigr) \dd x + \int f^\infty \Bigl( x,
    \frac{\di \mu_j^s}{\di \abs{\mu_j^{s}}}(x) \Bigr) \dd \abs{\mu_j^{s}}(x) \\
  &\qquad \to \int f \Bigl( x, \frac{\di \mu}{\di \Lcal^d}(x) \Bigr) \dd x + \int f^\infty \Bigl(
    x, \frac{\di \mu^s}{\di \abs{\mu^{s}}}(x) \Bigr) \dd \abs{\mu^{s}}(x)
\end{align*}
for all $f \in \Ebf(\R^d;\R^N)$.
\end{theorem}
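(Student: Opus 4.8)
The plan is to deduce the statement from the \emph{classical} Reshetnyak continuity theorem — the special case of integrands that are positively $1$-homogeneous in the second variable, see~\cite{Resh68WCCA} or~\cite{AmFuPa00FBVF} — by passing to the \enquote{graph} of the measure. Since the measures that occur in this paper are carried by a fixed bounded open set, one may assume $\mu_j,\mu$ are concentrated in a bounded open $\Omega \subset \R^d$ and read $\langle\mu\rangle$ over $\Omega$ (otherwise $\langle\frarg\rangle$ would be identically $+\infty$; the general case follows by exhaustion, using that $\langle\frarg\rangle$-strict convergence precludes escape of mass to infinity). To $\mu \in \Mbf(\Omega;\R^N)$ I would associate the lift
\[
  \tilde\mu := \mu \oplus \bigl(\Lcal^d \restrict \Omega\bigr) \in \Mbf(\Omega;\R^{N+1}),
\]
whose first $N$ components are those of $\mu$ and whose last component is $\Lcal^d \restrict \Omega$. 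Because $\Lcal^d \restrict \Omega$ is purely absolutely continuous, $\tilde\mu^s = \mu^s \oplus 0$ and $\frac{\di\tilde\mu}{\di\Lcal^d} = \bigl(\frac{\di\mu}{\di\Lcal^d},1\bigr)$, so that
\[
  \abs{\tilde\mu}(\Omega) = \int_\Omega \sqrt{1 + \absB{\tfrac{\di\mu}{\di\Lcal^d}}^2}\dd x + \abs{\mu^s}(\Omega) = \langle\mu\rangle.
\]
Moreover $\tilde\mu_j \toweakstar \tilde\mu$ is immediate from $\mu_j \toweakstar \mu$ (the last component is fixed), and the hypothesis $\langle\mu_j\rangle \to \langle\mu\rangle$ becomes $\abs{\tilde\mu_j}(\Omega) \to \abs{\tilde\mu}(\Omega)$. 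Hence $\tilde\mu_j \to \tilde\mu$ \emph{strictly} in the ordinary sense — this is precisely why $\langle\frarg\rangle$-strict, rather than ordinary strict, convergence is the right hypothesis.

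Next I would repackage the two-piece integrand $f \in \Ebf(\Omega;\R^N)$ as a single integrand $g$ on $\Omega \times \Sbb^N$, with $\Sbb^N \subset \R^{N+1}$ the unit sphere: for $b = (b',b_{N+1})$ set
\[
  g(x,b) := \abs{b_{N+1}}\, f\bigl(x, b'/\abs{b_{N+1}}\bigr) \ \text{ if } b_{N+1} \neq 0,
  \qquad g(x,b) := f^\infty(x,b') \ \text{ if } b_{N+1} = 0.
\]
Up to the standard homeomorphism of the closed upper hemisphere with $\cl{\Bbb^N}$ this is just the transform $Sf$ transported to the sphere. A short computation with the polar function of $\tilde\mu$ — which is $\bigl(\tfrac{\di\mu}{\di\Lcal^d},1\bigr)/\sqrt{1+\abs{\di\mu/\di\Lcal^d}^2}$ on the absolutely continuous part (last coordinate strictly positive) and $\bigl(\tfrac{\di\mu^s}{\di\abs{\mu^s}},0\bigr)$ on the singular part — together with $\abs{\tilde\mu^s} = \abs{\mu^s}$ yields
\[
  \int_\Omega g\Bigl(x, \tfrac{\di\tilde\mu}{\di\abs{\tilde\mu}}\Bigr)\dd\abs{\tilde\mu}
  = \int_\Omega f\Bigl(x,\tfrac{\di\mu}{\di\Lcal^d}\Bigr)\dd x + \int_\Omega f^\infty\Bigl(x,\tfrac{\di\mu^s}{\di\abs{\mu^s}}\Bigr)\dd\abs{\mu^s},
\]
and likewise with $\mu$ replaced by $\mu_j$. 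So both sides of the assertion are of the form $\int g(x,\frac{\di\tilde\mu}{\di\abs{\tilde\mu}})\dd\abs{\tilde\mu}$, and it remains only to know that $g$ is an admissible test integrand.

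The one step that genuinely needs care — and the only place the hypothesis $f \in \Ebf(\Omega;\R^N)$ is used — is checking that $g \in \Crm_b(\Omega \times \Sbb^N)$; after that, the classical Reshetnyak theorem applied to the strictly convergent sequence $\tilde\mu_j \to \tilde\mu$ immediately closes the argument. Boundedness follows from linear growth, $\abs{g(x,b)} \leq M(\abs{b_{N+1}} + \abs{b'}) \leq \sqrt2\,M$; joint continuity off the equator is clear because $f$ is continuous; and at an equator point $(x_0,(b_0',0))$ one writes $b'/\abs{b_{N+1}} = t\nu$ with $t = \abs{b'}/\abs{b_{N+1}} \to \infty$ and $\nu = b'/\abs{b'} \to b_0'$, so that $g(x,b) = \abs{b'}\cdot f(x,t\nu)/t \to f^\infty(x_0,b_0')$ by the very definition~\eqref{eq:f_infty_def} of the recession function and its joint continuity — exactly the properties built into $\Ebf(\Omega;\R^N)$ (an integrand not in $\Ebf$ is what would fail here). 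I expect this equator-continuity check to be the main obstacle; everything else is bookkeeping. Finally, note that, unlike the \emph{lower}-semicontinuity form of Reshetnyak's theorem, no convexity of $f$ in $A$ is required and one gets genuine convergence, simply because strictness of $\tilde\mu_j \to \tilde\mu$ is a symmetric hypothesis.
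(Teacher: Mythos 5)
Your argument is correct, and it is essentially the proof the paper delegates to the literature: Theorem~\ref{thm:reshetnyak} is stated here without proof, with a pointer to the appendix of~\cite{KriRin10RSIF}, where exactly this perspective construction --- lifting $\mu$ to $\mu \oplus \Lcal^d$ so that $\langle\mu\rangle$ becomes the total variation of the lift, and transporting $Sf$ to a bounded continuous positively $1$-homogeneous integrand on the (upper hemisphere of the) sphere --- reduces the claim to the classical Reshetnyak continuity theorem for weakly* plus strictly convergent sequences. The one caveat is cosmetic and lies in the statement rather than in your proof: over all of $\R^d$ the area functional as defined is identically $+\infty$, so the hypothesis must be read on a bounded set carrying the measures, exactly as you do.
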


Since $\LD(\Omega)$ is $\langle\frarg\rangle$-strictly dense in $\BD(\Omega)$ (by a mollification argument), this immediately implies the following result:

\begin{corollary} \label{cor:F_strictly_cont_ext}
Let $f \in \Ebf(\Omega;\R_\sym^{d \times d})$. Then, the $\langle\frarg\rangle$-strictly continuous extension of the functional
\[
  \Fcal(u) := \int_\Omega f \bigl( \Ecal u(x) \bigr) \dd x,  \qquad u \in \LD(\Omega)
\]
onto the space $u \in \BD(\Omega)$ is
\[
  \overline{\Fcal}(u) := \int_\Omega f \bigl( x,\Ecal u(x) \bigr) \dd x + \int_\Omega f^\infty \Bigl( x,
  \frac{\di E^s u}{\di \abs{E^s u}}(x) \Bigr) \dd \abs{E^s u}(x),
\]
where now $u \in \BD(\Omega)$.
\end{corollary}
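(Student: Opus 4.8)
The plan is to deduce the corollary as a soft consequence of three ingredients already at our disposal: (a) the $\langle\frarg\rangle$-strict density of $\LD(\Omega)$ in $\BD(\Omega)$ recalled just above (the mollification argument); (b) the Reshetnyak Continuity Theorem~\ref{thm:reshetnyak}; and (c) the elementary fact that a $\langle\frarg\rangle$-strictly continuous functional on $\BD(\Omega)$ is uniquely determined by its restriction to any $\langle\frarg\rangle$-strictly dense subset. First I would observe that $\overline{\Fcal}$ genuinely extends $\Fcal$: for $u \in \LD(\Omega)$ one has $E^s u = 0$, so $\abs{E^s u}(\Omega) = 0$ and the singular term in $\overline{\Fcal}$ drops out, leaving $\overline{\Fcal}(u) = \int_\Omega f(x,\Ecal u(x)) \dd x = \Fcal(u)$. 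It then remains to check that $\overline{\Fcal}$ is $\langle\frarg\rangle$-strictly continuous on all of $\BD(\Omega)$ and that it is the only such extension.

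For continuity, let $u_j \to u$ with respect to $\langle\frarg\rangle$-strict convergence in $\BD(\Omega)$. By the very definition of this convergence, $Eu_j \toweakstar Eu$ in $\Mbf(\Omega;\R_\sym^{d \times d})$ and $\langle Eu_j \rangle(\Omega) \to \langle Eu \rangle(\Omega)$, i.e.\ $Eu_j \to Eu$ with respect to $\langle\frarg\rangle$-strict convergence \emph{of measures}. Since $\Omega$ is bounded and $f \in \Ebf(\Omega;\R_\sym^{d \times d})$, the transform $Sf$ is bounded and continuous on the compact set $\cl{\Omega} \times \cl{\Bbb^N}$; extending it to $\R^d \times \cl{\Bbb^N}$ by Tietze's extension theorem (preserving the bound) and inverting $S$ produces $\tilde f \in \Ebf(\R^d;\R_\sym^{d \times d})$ agreeing with $f$, and with $\tilde f^\infty$ agreeing with $f^\infty$, on $\cl{\Omega} \times \R_\sym^{d \times d}$. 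Applying Theorem~\ref{thm:reshetnyak} with integrand $\tilde f$ to the measures $Eu_j, Eu$ (equivalently, invoking the version of that theorem valid for measures on the bounded Borel set $\Omega$, as noted in Section~\ref{sc:setup}) yields
\[
  \int_\Omega f(x,\Ecal u_j) \dd x + \int_\Omega f^\infty\Bigl(x,\frac{\di E^s u_j}{\di \abs{E^s u_j}}\Bigr) \dd \abs{E^s u_j}
  \;\longrightarrow\;
  \int_\Omega f(x,\Ecal u) \dd x + \int_\Omega f^\infty\Bigl(x,\frac{\di E^s u}{\di \abs{E^s u}}\Bigr) \dd \abs{E^s u},
\]
which is exactly $\overline{\Fcal}(u_j) \to \overline{\Fcal}(u)$.

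For uniqueness, note that $\langle\frarg\rangle$-strict convergence in $\BD(\Omega)$ implies weak* convergence there, and weak* limits in $\BD(\Omega)$ are unique (being a strong $\Lrm^1$-limit together with a weak* limit of symmetrized derivatives); hence limits in the $\langle\frarg\rangle$-strict sense are unique, and a $\langle\frarg\rangle$-strictly continuous functional is determined on the closure of any set by its values on that set. Since $\LD(\Omega)$ is $\langle\frarg\rangle$-strictly dense in $\BD(\Omega)$, it follows that $\overline{\Fcal}$ is \emph{the} $\langle\frarg\rangle$-strictly continuous extension of $\Fcal|_{\LD(\Omega)}$, as claimed.

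The only point requiring genuine care — and the one I would flag as the main (though still routine) obstacle — is the bookkeeping that lets Theorem~\ref{thm:reshetnyak}, stated for measures on $\R^d$, be applied to measures carried by $\cl{\Omega}$ and an integrand defined only on $\cl{\Omega} \times \R_\sym^{d \times d}$: one must verify that the extension of $f$ off $\cl{\Omega}$ (or, alternatively, the restriction of Reshetnyak's theorem to the bounded Borel set $\Omega$) is legitimate and does not disturb $\langle\frarg\rangle$-strictness, which in turn rests on the compatibility of the definitions of $\langle\frarg\rangle$-strict convergence in $\BD(\Omega)$ and in $\Mbf(\Omega;\R_\sym^{d \times d})$. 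Everything else is immediate.
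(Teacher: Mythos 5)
Your proposal is correct and follows exactly the route the paper intends: the paper derives the corollary "immediately" from the Reshetnyak Continuity Theorem~\ref{thm:reshetnyak} together with the $\langle\frarg\rangle$-strict density of $\LD(\Omega)$ in $\BD(\Omega)$, which is precisely your argument, with the extension/uniqueness bookkeeping spelled out. The domain issue you flag is already covered by the paper's remark that the measure-theoretic statements (including Reshetnyak) hold verbatim on bounded Borel sets $V \subset \R^d$ in place of $\R^d$.
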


Of course, the previous result also holds with an additional boundary term.

\begin{remark}
Corollary~\ref{cor:F_strictly_cont_ext} strongly suggests that $\overline{\Fcal}$ as defined above is the right candidate for the weakly* lower semicontinuous envelope of $\Fcal$. That this is indeed true is the content of Corollary~\ref{cor:relaxation}.
\end{remark}

Finally, a function $f \colon \cl{\Omega} \times \R^N \to \R$ is a \term{Carath\'{e}odory integrand} if it is Borel measurable in its first and continuous in its second argument.

\subsection{Symmetric quasiconvexity}  \label{ssc:sym_qc}

A locally bounded Borel function $h \colon \R_\sym^{d \times d} \to \R$ is called \term{symmetric-quasiconvex} if
\[
  h(A) \leq \dashint_{\omega} h \bigl( A + \Ecal \psi(z) \bigr) \dd z
\]
for all $A \in \R_\sym^{d \times d}$ and all $\psi \in \Crm_c^\infty(\omega;\R^d)$, where $\omega \subset \R^d$ is an arbitrary bounded Lipschitz domain (by standard covering arguments it suffices to check this for one particular choice of $\omega$ only). Notice that if $h$ is upper semicontinuous and has linear growth at infinity, we may replace the space $\Crm_c^\infty(\omega;\R^d)$ by $\LD_0(\omega)$ ($\LD$-functions with zero boundary values in the sense of trace) in the above definition, see~\cite[Remark~3.2]{BaFoTo00RTSF}. Section~4 of~\cite{Ebob00LSIF} contains an example of a symmetric-quasiconvex function that is not convex.

Using one-directional oscillations one can prove that if the function $h \colon \R_\sym^{d \times d} \to \R$ is symmetric-quasiconvex, then it holds that
\begin{equation} \label{eq:sym_qc_aodotb_conv}
  h( \theta A_1 + (1-\theta) A_2 ) \leq \theta h(A_1) + (1-\theta) h(A_2)
\end{equation}
whenever $A_1,A_2 \in \R_\sym^{d \times d}$ with $A_2 - A_1 = a \odot b$ for some $a,b \in \R^d$ and $\theta \in [0,1]$; also see Proposition~3.4 in~\cite{FonMul99AQLS} for a more general statement in the framework of $\Acal$-quasiconvexity.

If we consider $\R_\sym^{d \times d}$ to be identified with $\R^{d(d+1)/2}$ and $h \colon \R_\sym^{d \times d} \to \R$ with $\tilde{h} \colon \R^{d(d+1)/2} \to \R$, then the convexity in~\eqref{eq:sym_qc_aodotb_conv} implies that $\tilde{h}$ is separately convex and so, by a well-known result, even locally Lipschitz, see for example Lemma~2.2 in~\cite{BaKiKr00RQE}. If additionally $h$ has linear growth at infinity, then the formula from \emph{loc.~cit.} even implies that $h$ is globally Lipschitz. In particular,~\eqref{eq:f_sharp_def} becomes
\[
  h^\#(A) := \limsup_{t \to \infty} \frac{h(tA)}{t},  \qquad A \in \R_\sym^{d \times d}.
\]
Likewise, for $f \colon \cl{\Omega} \times \R_\sym^{d \times d} \to \R$ that is symmetric-quasiconvex in its second variable and has linear growth at infinity, the definition of the recession function $f^\infty$ from~\eqref{eq:f_infty_def} simplifies to
\begin{equation} \label{eq:f_infty_def_Lip}
  \qquad f^\infty(x,A) := \lim_{\substack{\!\!\!\! x' \to x \\ \; t \to \infty}}
    \frac{f(x',tA)}{t},  \qquad \text{$x \in \cl{\Omega}$, $A \in \R_\sym^{d \times d}$.}
\end{equation}

Notice that from Fatou's Lemma we get that the recession function $f^\#$, and hence also $f^\infty$ if it exists, is symmetric-quasiconvex whenever $f$ is, this is completely analogous to the situation for ordinary quasiconvexity. Hence, $f^\#$ and $f^\infty$ are also continuous on $\R_\sym^{d \times d} \setminus \{0\}$ in this situation.

\subsection{Young measures} \label{ssc:YM}

Generalized Young measures were introduced by DiPerna and Majda in~\cite{DiPMaj87OCWS}, we here follow the framework of~\cite{KriRin10CGGY}, which itself is based upon Alibert and Bouchitt\'{e}'s reformulation~\cite{AliBou97NUIG} of the theory.

A \term{(generalized) Young measure} on the open set $\Omega \subset \R^d$ and with values in $\R^N$ is a triple $(\nu_x,\lambda_\nu,\nu_x^\infty)$ consisting of
\begin{itemize}
  \item[(i)] a parametrized family of probability measures $(\nu_x)_{x \in \cl{\Omega}} \subset \Mbf^1(\R^N)$,
  \item[(ii)] a positive finite measure $\lambda_\nu \in \Mbf^+(\cl{\Omega})$ and
  \item[(iii)] a parametrized family of probability measures $(\nu_x^\infty)_{x \in \cl{\Omega}} \subset \Mbf^1(\Sbb^{N-1})$.
\end{itemize}
Moreover, we require that
\begin{itemize}
  \item[(iv)] the map $x \mapsto \nu_x$ is weakly* measurable with respect to $\Lcal^d$, i.e.\ the function $x \mapsto \dpr{f(x,\frarg),\nu_x}$ is $\Lcal^d$-measurable for all bounded Borel functions $f \colon \cl{\Omega} \times \R^N \to \R$,
  \item[(v)] the map $x \mapsto \nu_x^\infty$ is weakly* measurable with respect to $\lambda_\nu$, and
  \item[(vi)] $x \mapsto \dprn{\abs{\frarg},\nu_x} \in \Lrm^1(\Omega)$.
\end{itemize}
The set $\Ybf(\Omega;\R^N)$ contains all these Young measures. Similarly, we define the space $\Ybf_\loc(\R^d;\R^N)$, but with $\lambda_\nu$ only a local measure and $x \mapsto \dprn{\abs{\frarg},\nu_x} \in \Lrm_\loc^1(\Omega)$.

The \term{duality product} between a function $f \in \Ebf(\Omega;\R^N)$ and a Young measure $\nu \in \Ybf(\Omega;\R^N)$, or $f \in \Ebf_c(\R^d;\R^N)$ and $\nu \in \Ybf_\loc(\R^d;\R^N)$, is given by
\[
  \ddprb{f,\nu} := \int \dprb{f(x,\frarg),\nu_x} \dd x
  + \int \dprb{f^\infty(x,\frarg),\nu_x^\infty} \dd \lambda_\nu(x).
\]
Via this duality product, the space $\Ybf(\Omega;\R^N)$ is part of the dual space to $\Ebf(\Omega;\R^N)$, and hence we say that a sequence of Young measures $(\nu_j) \subset \Ybf(\Omega;\R^N)$ \term{converges weakly*} to $\nu \in \Ybf(\Omega;\R^N)$ if $\ddprn{f,\nu_j} \to \ddprn{f,\nu}$ for all $f \in \Ebf(\Omega;\R^N)$. In $\Ybf_\loc(\R^d;\R^N)$, we use weak* convergence relative to $\Ebf_c(\R^d;\R^N)$, i.e.\ $\nu_j \toweakstar \nu$ in $\Ybf_\loc(\R^d;\R^N)$ if $\ddprn{f,\nu_j} \to \ddprn{f,\nu}$ for all $f \in \Ebf_c(\R^d;\R^N)$.

Fundamental for all Young measure theory are the following two compactness statements, for which a proof can be found in~\cite[Corollary~2]{KriRin10CGGY} (the proof only covers (i), but easily generalizes to (ii) as well):

\begin{lemma}[Compactness] \label{lem:YM_compactness}
The following two statements are true:
\begin{itemize}
  \item[(i)] Let $(\nu_j) \subset \Ybf(\Omega;\R^N)$ be a sequence of Young measures satisfying
\[
  \qquad \supmod_j \, \ddprb{\ONE \otimes \abs{\frarg}, \nu_j} < \infty.
\]
Then, there exists a subsequence (not relabeled) with $\nu_j \toweakstar \nu$ in $\Ybf(\Omega;\R^N)$.
  \item[(ii)] Let $(\nu_j) \subset \Ybf_\loc(\R^d;\R^N)$ be a sequence of Young measures satisfying
\[
  \qquad \supmod_j \, \ddprb{\phi \otimes \abs{\frarg}, \nu_j} < \infty
  \qquad\text{for all $\phi \in \Crm_c(\R^d)$.}
\]
Then, there exists a subsequence (not relabeled) with $\nu_j \toweakstar \nu$ in $\Ybf_\loc(\R^d;\R^N)$.
\end{itemize}
\end{lemma}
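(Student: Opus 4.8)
The plan is to view each $\nu_j$ as an element of the dual space $\Ebf(\Omega;\R^N)^*$ through the pairing $f \mapsto \ddprn{f,\nu_j}$, to extract a weakly* convergent subsequence by a sequential Banach--Alaoglu argument, and then to verify that the limiting functional is represented by a genuine Young measure. This last step, which amounts to the weak*-sequential closedness of the set of Young measures inside $\Ebf(\Omega;\R^N)^*$, is where the real content lies.

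\emph{Setup and extraction.} First I would record that $\Ebf(\Omega;\R^N)$ is separable: $f \mapsto Sf$ is an isometry (with $\norm{f}_{\Ebf} = \norm{Sf}_\infty$) onto $\Crm(\cl{\Omega \times \Bbb^N})$, which is separable as a space of continuous functions on a compact metric space. From $(Sf)(x,\hat A) = (1-\abs{\hat A}) f\bigl(x, \hat A/(1-\abs{\hat A})\bigr)$ one reads off the pointwise bounds $\abs{f(x,A)} \le \norm{f}_{\Ebf}(1+\abs{A})$ and, letting $\abs{\hat A} \to 1$ along a fixed direction, $\abs{f^\infty(x,A)} \le \norm{f}_{\Ebf}\abs{A}$. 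Since $(\nu_j)_x^\infty \in \Mbf^1(\Sbb^{N-1})$ gives $\dprn{\abs{\frarg},(\nu_j)_x^\infty} = 1$, we obtain
\[
  \absb{\ddprb{f,\nu_j}}
  \le \norm{f}_{\Ebf}\biggl( \int_\Omega \bigl(1 + \dprn{\abs{\frarg},(\nu_j)_x}\bigr) \dd x + \lambda_{\nu_j}(\cl{\Omega}) \biggr)
  = \norm{f}_{\Ebf}\bigl( \abs{\Omega} + \ddprb{\ONE \otimes \abs{\frarg},\nu_j} \bigr),
\]
so the hypothesis makes $(\ddprn{\frarg,\nu_j})_j$ bounded in $\Ebf(\Omega;\R^N)^*$. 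By the sequential version of Banach--Alaoglu (using separability), a subsequence, not relabeled, satisfies $\ddprn{f,\nu_j} \to L(f)$ for all $f \in \Ebf(\Omega;\R^N)$, for some $L \in \Ebf(\Omega;\R^N)^*$.

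\emph{The limit is a Young measure.} Here I would pass two structural properties to the limit. First, $L$ is \emph{positive}: if $f \ge 0$ on $\cl{\Omega} \times \R^N$ then $f^\infty \ge 0$ as well, so $\ddprn{f,\nu_j} \ge 0$ for every $j$ and hence $L(f) \ge 0$. Second, the \emph{normalization} $L(\phi \otimes \ONE) = \int_\Omega \phi \dd x$ holds for all $\phi \in \Crm(\cl{\Omega})$, because $(\phi \otimes \ONE)^\infty \equiv 0$ and $\dprn{\ONE,(\nu_j)_x} = 1$ a.e. Transporting $L$ through $S$ represents it by a positive finite measure $\sigma$ on $\cl{\Omega \times \Bbb^N}$; disintegrating $\sigma$ over its $\cl{\Omega}$-marginal, splitting off the portion carried by $\cl{\Omega} \times \Sbb^{N-1}$ to obtain $\lambda_\nu \in \Mbf^+(\cl{\Omega})$ together with the family $(\nu_x^\infty) \subset \Mbf^1(\Sbb^{N-1})$, and undoing $S$ on the portion carried by $\cl{\Omega} \times \Bbb^N$ to obtain the family $(\nu_x) \subset \Mbf^1(\R^N)$ (the total mass of $\nu_x$ being $1$ precisely by the normalization), one checks that the resulting triple satisfies (i)--(vi) and that $L = \ddprn{\frarg,\nu}$, whence $\nu_j \toweakstar \nu$ in $\Ybf(\Omega;\R^N)$. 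This disintegration-and-representation step --- essentially the representation theorem for generalized Young measures in~\cite{KriRin10CGGY} --- is the step I expect to be the main obstacle, since it is here that the precise structure of the triple (interior part a probability measure, concentration part a probability measure on the sphere weighted by $\lambda_\nu$) must be recovered from an abstract dual element.

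\emph{The local version.} For (ii) I would exhaust $\R^d$ by the balls $B_k := B(0,k)$ and, for each $k$, pick $\phi_k \in \Crm_c(\R^d)$ with $\ONE_{\cl{B_k}} \le \phi_k$. Since $\ddprb{\phi_k \otimes \abs{\frarg},\nu_j} = \int \phi_k \dprn{\abs{\frarg},(\nu_j)_x} \dd x + \int \phi_k \dd \lambda_{\nu_j}$ is bounded uniformly in $j$ by hypothesis, the restriction of each $\nu_j$ to $\cl{B_k}$ meets the hypothesis of (i) on the bounded domain $B_k$; applying (i) on $B_1$, then on $B_2$ along the resulting subsequence, and so on, a diagonal extraction produces a single subsequence whose restrictions converge on every $B_k$. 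The limits agree on overlaps and thus assemble into a single $\nu \in \Ybf_\loc(\R^d;\R^N)$, and since each $f \in \Ebf_c(\R^d;\R^N)$ has $\supp f(\frarg,A) \subset \cl{B_k}$ for some $k$, this yields $\ddprn{f,\nu_j} \to \ddprn{f,\nu}$, i.e.\ $\nu_j \toweakstar \nu$ in $\Ybf_\loc(\R^d;\R^N)$.
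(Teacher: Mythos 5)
Your proposal is correct and follows essentially the same route as the proof the paper relies on (Corollary~2 of~\cite{KriRin10CGGY}, going back to Alibert--Bouchitt\'{e}): identify $\Ebf(\Omega;\R^N)$ with $\Crm(\cl{\Omega \times \Bbb^N})$ via $S$, extract a weak* limit in the dual by Banach--Alaoglu, and recover the limiting triple from the positive representing measure using the normalization $L(\phi \otimes \ONE) = \int_\Omega \phi \dd x$ and disintegration; the exhaustion-and-diagonal argument for (ii) is also the intended generalization. The only point worth polishing is that in (ii) the partial limits need only be matched on the \emph{open} balls (the concentration measures obtained on $\cl{B_k}$ and on $\cl{B_{k+1}}$ may disagree on $\partial B_k$), which suffices since elements of $\Ebf_c(\R^d;\R^N)$ have compact support.
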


As proved in Lemma~3 of~\cite{KriRin10CGGY}, there exists a countable set of functions $\{f_k\} = \setn{ \phi_k \otimes h_k \in \Crm(\cl{\Omega} \times \R^N) }{ k \in \N } \subset \Ebf(\Omega;\R^N)$ such that $\ddprn{f_k,\nu_1} = \ddpr{f_k,\nu_2}$ for two Young measures $\nu_1,\nu_2 \in \Ybf(\Omega;\R^N)$ and all $k \in \N$ implies $\nu_1 = \nu_2$. A similar statement holds in $\Ybf_\loc(\R^d;\R^N)$, but this time with $\{f_k\} \subset \Ebf_c(\R^d;\R^N)$, i.e.\ $\phi_k \in \Crm_c(\R^d)$. An immediate consequence is that to uniquely identify the limit in the weak* convergence $\nu_j \toweakstar \nu$ in $\Ebf(\Omega;\R^N)$, it suffices to test with the collection $\{f_k\}$, we say that the $f_k$ \enquote{determine} the Young measure convergence.

Each measure $\mu \in \Mbf(\cl{\Omega};\R^N)$ with Lebesgue--Radon--Nikod\'{y}m decomposition $\mu = a \Lcal^d \restrict \Omega + p \abs{\mu^s}$, where $a \in \Lrm^1(\Omega)$, $p \in \Lrm^1(\cl{\Omega},\abs{\mu^s};\Sbb^{N-1})$, induces an \term{elementary Young measure} $\epsilon_\mu \in \Ybf(\Omega;\R^N)$ through
\[
  (\epsilon_\mu)_x := \delta_{a(x)},  \qquad \lambda_{\epsilon_\mu} := \abs{\mu^s},
    \qquad (\epsilon_\mu)_x^\infty := \delta_{p(x)}.
\]
If $\epsilon_{\mu_j} \toweakstar \nu$ in $\Ybf(\Omega;\R^N)$, then we say that the $\mu_j$ \term{generate} $\nu$ and we write $\mu_j \toY \nu$. Similarly, if $\epsilon_{\mu_j} \toweakstar \nu$ in $\Ybf_\loc(\Omega;\R^N)$, then we write $\mu_j \toY \nu$, the ambient space being clear from the context.

For a Young measure $\nu \in \Ybf(\Omega;\R^N)$, we define its \term{barycenter} $[\nu] \in \Mbf(\Omega;\R^N)$ to be
\[
  [\nu] := \dprb{\id,\nu_x} \, \Lcal^d + \dprb{\id,\nu_x^\infty} \, \lambda_\nu,
\]
and yet again similarly for $\nu \in \Ybf_\loc(\R^d;\R^N)$ (now of course $[\nu] \in \Mbf_\loc(\R^d;\R^N)$). Clearly, weak* convergence of Young measures implies the corresponding weak* convergence of the barycenters.

A Young measure $\nu \in \Ybf(\Omega;\R_\sym^{d \times d})$ (so $\R^N$ is replaced by $\R_\sym^{d \times d}$ in the definitions above) is called a \term{BD-Young measure}, in symbols $\nu \in \BDY(\Omega)$, if it can be generated by a sequence of elementary Young measures corresponding to symmetrized derivatives. That is, for all $\nu \in \BDY(\Omega)$, there exists a sequence $(u_j) \subset \BD(\Omega)$ with $Eu_j \toY \nu$. It is easy to see that for a BD-Young measure $\nu \in \BDY(\Omega)$, there exists $u \in \BD(\Omega)$ satisfying $Eu = [\nu]$, this $u$ is called the \term{underlying deformation} of $\nu$. Similarly, define $\BDY_\loc(\R^d)$ by replacing $\Ybf(\Omega;\R_\sym^{d \times d})$ and $\BD(\Omega)$ by their respective local counterparts in the previous definition. When working with $\BDY(\Omega)$ or $\BDY_\loc(\R^d)$, the appropriate spaces of integrands are $\Ebf(\Omega;\R_\sym^{d \times d})$ and $\Ebf_c(\R^d;\R_\sym^{d \times d})$, respectively, since it is clear that both $\nu_x$ and $\nu_x^\infty$ only take values in $\R_\sym^{d \times d}$ whenever $\nu \in \BD(\Omega)$ or $\nu \in \BD_\loc(\R^d)$.

On several occasions we will invoke the following lemma about boundary adjustments, see Lemma~4 of~\cite{KriRin10CGGY} for the corresponding result in $\BV$ (the proof is the same).

\begin{lemma} \label{lem:boundary_adjust}
Let $\nu \in \BDY(\Omega)$ be a BD-Young measure with $\lambda_\nu(\partial \Omega) = 0$ and barycenter $[\nu] = Eu$, where $u \in \BD(\Omega)$. Then, there exists a generating sequence $(v_j) \subset (\Wrm^{1,1} \cap \Crm^{\infty})(\Omega;\R^d)$ with $Ev_j \toY \nu$, and $v_j|_{\partial \Omega} = u|_{\partial \Omega}$ (in the sense of trace) for all $j \in \N$.
\end{lemma}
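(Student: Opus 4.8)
The plan is to imitate the proof of the corresponding statement for $\BV$, \cite[Lemma~4]{KriRin10CGGY}: first produce a \emph{smooth} generating sequence for $\nu$ that converges strongly in $\Lrm^1$ to the underlying deformation $u$, and then repair its boundary trace by a De~Giorgi-type cut-off against a fixed smooth map carrying the prescribed trace, letting the width of the affected boundary collar shrink to zero. Concretely, since $\nu \in \BDY(\Omega)$ we may choose $(u_j) \subset \BD(\Omega)$ with $Eu_j \toY \nu$. Because $\ONE \otimes \abs{\frarg} \in \Ebf(\Omega;\R_\sym^{d\times d})$ and $\ddprb{\ONE \otimes \abs{\frarg}, \epsilon_{Eu_j}} = \abs{Eu_j}(\Omega)$, weak* convergence of Young measures yields $\sup_j \abs{Eu_j}(\Omega) < \infty$, while weak* convergence of barycenters yields $Eu_j \toweakstar Eu$ in $\Mbf(\Omega;\R_\sym^{d\times d})$. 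Using the $\BD$-Poincar\'{e} inequality, weak*-compactness in $\BD(\Omega)$, and Lemma~\ref{lem:E_kernel}, we subtract from each $u_j$ a rigid deformation $r_j$---which leaves $Eu_j$, hence the generated Young measure, unchanged---so that, along a non-relabelled subsequence, $u_j \to u$ strongly in $\Lrm^1(\Omega;\R^d)$. Next, exploiting that $\partial\Omega$ is Lipschitz, we apply to each $u_j$ a partition-of-unity mollification whose radius tends to $0$ towards $\partial\Omega$ and is otherwise small (depending on $j$); this produces $(\hat u_j) \subset (\Wrm^{1,1} \cap \Crm^{\infty})(\Omega;\R^d)$ with $\hat u_j|_{\partial\Omega} = u_j|_{\partial\Omega}$, $\norm{\hat u_j - u_j}_{\Lrm^1} \to 0$, $\abs{E\hat u_j}(\Omega) \leq \abs{Eu_j}(\Omega) + 1/j$, and---if the radii are taken small relative to the oscillation of $u_j$, exactly as in the $\BV$ case---still $E\hat u_j \toY \nu$. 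In particular $\hat u_j \to u$ in $\Lrm^1$, $\sup_j \abs{E\hat u_j}(\Omega) < \infty$, and (testing against $\phi \otimes \abs{\frarg}$) $\abs{E\hat u_j} \toweakstar \dprn{\abs{\frarg},\nu_x}\,\Lcal^d\restrict\Omega + \lambda_\nu$ as measures on $\cl{\Omega}$.

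\proofstep{The shrinking-collar cut-off.}
For $\delta > 0$ put $\Omega_\delta := \setn{x \in \Omega}{\dist(x,\partial\Omega) > \delta}$ and $A_\delta := \Omega \setminus \cl{\Omega_\delta}$. Since $\lambda_\nu(\partial\Omega) = 0$ and $A_\delta \todown \emptyset$, the quantity
\[
  \eta(\delta) := \int_{A_\delta} \dprn{\abs{\frarg},\nu_x} \dd x + \lambda_\nu(\cl{A_\delta})
\]
satisfies $\eta(\delta) \tolong 0$ as $\delta \todown 0$, and from the first paragraph $\limsup_j \abs{E\hat u_j}(\cl{A_\delta}) \leq \eta(\delta)$. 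For each $\delta$ fix---a boundary-adapted mollification of $u$ itself, with sufficiently fast rate---a map $g_\delta \in (\Wrm^{1,1} \cap \Crm^{\infty})(\Omega;\R^d)$ with $g_\delta|_{\partial\Omega} = u|_{\partial\Omega}$, $\norm{g_\delta - u}_{\Lrm^1} \leq \delta$, and $\abs{Eg_\delta}(A_\delta) \to 0$. Split $A_\delta$ into $m$ sublayers and choose cut-offs $\rho_1,\dots,\rho_m \in \Crm^{\infty}(\Omega;[0,1])$ with $\rho_i \equiv 1$ on $\Omega_\delta$, $\rho_i \equiv 0$ near $\partial\Omega$, pairwise disjointly supported gradients, and $\abs{\nabla\rho_i} \leq Cm/\delta$; averaging over $i$, some index $i = i(j,\delta,m)$ obeys
\[
  \int_\Omega \abs{\nabla\rho_i}\,\abs{\hat u_j - g_\delta} \dd x
  \;\leq\; \frac{C}{\delta}\,\norm{\hat u_j - g_\delta}_{\Lrm^1}
  \;\leq\; \frac{C}{\delta}\bigl( \norm{\hat u_j - u}_{\Lrm^1} + \delta \bigr).
\]
Put $\rho := \rho_i$ and $v_{j,\delta} := \rho\,\hat u_j + (1-\rho)\,g_\delta$; then $v_{j,\delta} \in (\Wrm^{1,1} \cap \Crm^{\infty})(\Omega;\R^d)$, $v_{j,\delta}|_{\partial\Omega} = u|_{\partial\Omega}$, $v_{j,\delta} \equiv \hat u_j$ on $\Omega_\delta$, and
\[
  E v_{j,\delta} = \rho\,E\hat u_j + (1-\rho)\,Eg_\delta + \nabla\rho \odot (\hat u_j - g_\delta),
\]
which restricts to $E\hat u_j \restrict \Omega_\delta$ on $\Omega_\delta$ and has total variation over $A_\delta$ at most $\abs{E\hat u_j}(\cl{A_\delta}) + \abs{Eg_\delta}(A_\delta) + \tfrac{C}{\delta}(\norm{\hat u_j - u}_{\Lrm^1} + \delta)$.

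\proofstep{Diagonalisation.}
Pick $\delta_j \todown 0$ and $m_j \to \infty$ slowly enough that $\delta_j^{-1}\norm{\hat u_j - u}_{\Lrm^1} \to 0$ and $\limsup_j \abs{E\hat u_j}(\cl{A_{\delta_j}}) = 0$, and set $v_j := v_{j,\delta_j}$. Then $v_j \in (\Wrm^{1,1}\cap\Crm^{\infty})(\Omega;\R^d)$, $v_j|_{\partial\Omega} = u|_{\partial\Omega}$, $v_j \equiv \hat u_j$ on $\Omega_{\delta_j}$, and $\abs{Ev_j}(A_{\delta_j}) \to 0$ by the second paragraph. For $f \in \Ebf(\Omega;\R_\sym^{d\times d})$ with linear growth constant $M_f$, split $\cl{\Omega} = \Omega_{\delta_j} \cup A_{\delta_j} \cup \partial\Omega$: on $\Omega_{\delta_j}$ the elementary Young measure of $Ev_j$ coincides with that of $E\hat u_j$; the contribution of $A_{\delta_j}$ to $\ddprb{f,\epsilon_{Ev_j}}$ (resp.\ to $\ddprb{f,\epsilon_{E\hat u_j}}$, resp.\ to $\ddprb{f,\nu}$) is bounded by $M_f(\abs{A_{\delta_j}} + \abs{Ev_j}(A_{\delta_j}))$ (resp.\ $M_f(\abs{A_{\delta_j}} + \abs{E\hat u_j}(\cl{A_{\delta_j}}))$, resp.\ $M_f(\abs{A_{\delta_j}} + \eta(\delta_j))$); and $\partial\Omega$ carries no mass of $\nu$ since $\lambda_\nu(\partial\Omega) = 0$. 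Hence
\begin{align*}
  \bigl| \ddprb{f,\epsilon_{Ev_j}} - \ddprb{f,\nu} \bigr|
  &\leq \bigl| \ddprb{f,\epsilon_{E\hat u_j}} - \ddprb{f,\nu} \bigr| \\
  &\qquad + M_f\bigl( 3\abs{A_{\delta_j}} + \abs{E\hat u_j}(\cl{A_{\delta_j}}) + \abs{Ev_j}(A_{\delta_j}) + \eta(\delta_j) \bigr),
\end{align*}
and the right-hand side tends to $0$ as $j \to \infty$ by the first two paragraphs. Therefore $Ev_j \toY \nu$, so $(v_j)$ is the required sequence.

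\proofstep{Expected main obstacle.}
The two genuinely delicate points, both handled precisely as in the $\BV$ case of~\cite{KriRin10CGGY}, are: (i) verifying that the interior mollification of the first paragraph preserves the \emph{generated} Young measure---this forces an adaptive, $j$-dependent choice of mollification radii together with a check against a countable determining family of integrands, since the difference $E\hat u_j - Eu_j$ is not small in total variation; and (ii) the bookkeeping in the double limit of the diagonalisation, where the cut-off gradient term $\nabla\rho \odot (\hat u_j - g_{\delta_j})$ must be annihilated by the De~Giorgi averaging \emph{simultaneously} with the collapse of the collar $A_{\delta_j}$, the latter being harmless exactly because the hypothesis $\lambda_\nu(\partial\Omega) = 0$ forbids concentration of the measures $\abs{E\hat u_j}$ at $\partial\Omega$.
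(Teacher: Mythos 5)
Your strategy is precisely the intended one: the paper offers no proof of this lemma, referring instead to Lemma~4 of~\cite{KriRin10CGGY} with the remark that \enquote{the proof is the same}, and your argument is a faithful transplant of that $\BV$ proof --- an interior, trace-preserving mollification that keeps the generated Young measure (checked against a countable determining family after an area-strict approximation for each fixed $j$), followed by a shrinking-collar cut-off against a smooth competitor carrying the trace $u|_{\partial\Omega}$, the collar being harmless exactly because $\lambda_\nu(\partial\Omega)=0$.

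There is, however, one step that fails as written. You only require $\norm{g_\delta-u}_{\Lrm^1}\leq\delta$, and your bound for the cut-off gradient term is $\tfrac{C}{\delta}\bigl(\norm{\hat u_j-u}_{\Lrm^1}+\delta\bigr)$. After the diagonalisation this equals $\tfrac{C}{\delta_j}\norm{\hat u_j-u}_{\Lrm^1}+C$, which converges to the constant $C>0$, not to $0$. Hence $\abs{Ev_j}(A_{\delta_j})$ is only $\mathrm{O}(1)$, the collar contribution $M_f\abs{Ev_j}(A_{\delta_j})$ in your final estimate does not vanish, and you obtain merely $\limsup_j\absb{\ddprb{f,\epsilon_{Ev_j}}-\ddprb{f,\nu}}\leq C M_f$ instead of $0$. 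The repair is immediate and stays within your construction: require $\norm{g_\delta-u}_{\Lrm^1}=\SmallO(\delta)$, e.g.\ $\leq\delta^2$, which the boundary-adapted mollification delivers by refining its parameters; then $\tfrac{C}{\delta}\norm{\hat u_j-g_\delta}_{\Lrm^1}\leq\tfrac{C}{\delta}\norm{\hat u_j-u}_{\Lrm^1}+C\delta\to0$ and the rest of the argument goes through. Two minor further points: the parameter $m_j\to\infty$ does no work in your estimates (the $m$-dependence cancels in the De~Giorgi average, and you control $\abs{E\hat u_j}(\cl{A_{\delta_j}})$ directly through $\eta$, so a single transition layer suffices); and the $\Wrm^{1,1}$-membership of $v_j$ should be deduced from $\rho\equiv0$ near $\partial\Omega$ together with $g_\delta\in\Wrm^{1,1}$ and the smoothness of $\hat u_j$ on a neighbourhood of $\cl{\Omega_{\delta_j}}$, rather than from the intermediate claim $\hat u_j\in\Wrm^{1,1}(\Omega)$, which Ornstein's non-inequality makes non-obvious for the variable-radius mollification of a general $\BD$-function.
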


Finally, we also mention the following results on \enquote{extended representation}, which can be found in Proposition~2 of~\cite{KriRin10CGGY}: Let $\nu_j \to \nu$ in $\Ybf(\Omega;\R^N)$. Then, also for $g(x,A) := \ONE_B(x) f(x,A)$, where $f \in \Ebf(\Omega;\R^N)$ and $B \subset \Omega$ is a Borel set, it holds that
\begin{equation} \label{eq:ext_repr}
  \ddprb{g,\nu_j} \to \ddprb{g,\nu}  \qquad\text{as long as $(\Lcal^d + \lambda_\nu)(\partial B) = 0$.}
\end{equation}
Moreover, even for a Carath\'{e}odory function $f \colon \cl{\Omega} \times \R^N \to \R$ such that the recession function $f^\infty$ exists in the sense of~\eqref{eq:f_infty_def} and is (jointly) continuous on $\cl{\Omega} \times \R^N$, we have
\begin{equation} \label{eq:ext_repr_Caratheodory}
  \ddprb{f,\nu_j} \to \ddprb{f,\nu}.
\end{equation}
Similar statements hold for $\Ybf_\loc(\R^d;\R^N)$ if we additionally assume that $B \subset \R^d$ is relatively compact.

\section{Localization principles for Young measures} \label{sc:localization}

This section presents two localization principles for Young measures, one at regular and one at singular points. These results are essentially adaptations of Propositions~4.1 and~4.2 in~\cite{Rind10?LSYM}, but some modifications had to be incorporated owing to the different notion of tangent measures employed here.

Notice that the following two propositions are formulated in the space $\BDY(\Omega)$ for convenience only. Since taking tangent Young measures is a local operation, they clearly also hold in $\BDY_\loc(\R^d)$ and in fact also in $\Ybf_\loc(\R^d;\R^{m \times d})$ by an obvious generalization.

\subsection{Localization principle at regular points}

We start with \enquote{regular} points.

\begin{proposition}[Localization at regular points] \label{prop:localize_reg}
Let $\nu \in \BDY(\Omega)$ be a BD-Young measure. Then, for $\Lcal^d$-almost every $x_0 \in \Omega$ there exists a \term{regular tangent Young measure} $\sigma \in \BDY_\loc(\R^d)$ satisfying
\begin{align}
  [\sigma] &\in \Tan([\nu],x_0),  &\sigma_y &= \nu_{x_0} \quad\text{a.e.,}  \label{eq:loc_reg_1} \\
  \lambda_\sigma &= \frac{\di \lambda_\nu}{\di \Lcal^d}(x_0) \, \Lcal^d  \in \Tan(\lambda_\nu,x_0),
    &\sigma_y^\infty &= \nu_{x_0}^\infty \quad\text{a.e.} \label{eq:loc_reg_2}
\end{align}
In particular, for all compact sets $K \subset \R^d$ with $\Lcal^d(\partial K) = 0$, and all $h \in \Crm(\R^{d \times d})$ such that the recession function $h^\infty$ exists in the sense of~\eqref{eq:f_infty_def}, it holds that
\begin{equation} \label{eq:loc_reg_ddpr}
  \ddprb{\ONE_K \otimes h, \sigma} = \biggl[ \dprb{h,\nu_{x_0}} + \dprb{h^\infty,\nu_{x_0}^\infty}
    \frac{\di \lambda_\nu}{\di \Lcal^d}(x_0) \biggr] \abs{K}.
\end{equation}
\end{proposition}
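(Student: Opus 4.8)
The plan is to extract the tangent Young measure by a blow-up procedure paralleling the construction of tangent measures, and to identify its components using Lebesgue differentiation theory for the various measures associated with $\nu$. First I would fix a generating sequence $(u_j) \subset \BD(\Omega)$ with $Eu_j \toY \nu$, and choose $x_0 \in \Omega$ to be simultaneously: a Lebesgue point of $x \mapsto \nu_x$ and of $x \mapsto \nu_x^\infty$ (with respect to $\Lcal^d$) in the weak* sense, i.e. $\dashint_{B(x_0,r)} \dpr{f(x,\sbullet),\nu_x}\dd x \to \dpr{f(x_0,\sbullet),\nu_{x_0}}$ for each $f$ in the countable determining family $\{f_k\}$; a point at which $\frac{\di \lambda_\nu}{\di \Lcal^d}(x_0)$ exists; and a point where $\Lcal^d$-almost-everywhere statements about tangent measures from Section~\ref{ssc:Tan} hold. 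Almost every $x_0 \in \Omega$ satisfies all of these. For such $x_0$, I would set $T = T^{(x_0,r)}$ and consider the rescaled Young measures $\nu^{(x_0,r)}$ obtained by pushing forward $\nu$ under $T$ with the appropriate $r^{-d}$ normalization on the Lebesgue part and an analogous normalization on $\lambda_\nu$. The key check is that these rescaled Young measures have uniformly bounded mass $\ddpr{\ONE \otimes \abs{\sbullet}, \cdot}$ on compact sets — this follows because $\dpr{\abs{\sbullet},\nu_x} \in \Lrm^1$ and $\lambda_\nu$ is finite, and the density $\frac{\di\lambda_\nu}{\di\Lcal^d}(x_0)$ controls the singular mass after rescaling — so by Lemma~\ref{lem:YM_compactness}(ii) we may pass to a weak* limit $\sigma \in \Ybf_\loc(\R^d;\R_\sym^{d\times d})$ along a subsequence $r_n \todown 0$.

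Next I would identify the four components of $\sigma$. The equalities $\sigma_y = \nu_{x_0}$ and $\sigma_y^\infty = \nu_{x_0}^\infty$ for a.e.\ $y$ follow from the Lebesgue-point property at $x_0$: testing against $\phi \otimes h_k$ with $\phi \in \Crm_c(\R^d)$ and $h_k$ from the determining family, the rescaling turns the spatial average over $B(x_0, r_n\sbullet)$ into the stated limit, and since the $f_k$ determine Young measures this pins down $\sigma_y$ and $\sigma_y^\infty$ as the constant-in-$y$ measures $\nu_{x_0}$, $\nu_{x_0}^\infty$. The identity $\lambda_\sigma = \frac{\di\lambda_\nu}{\di\Lcal^d}(x_0)\,\Lcal^d$ comes from the fact that, by the choice of $x_0$ as a point where $\lambda_\nu$ has an $\Lcal^d$-derivative and (using the material on tangent measures in Section~\ref{ssc:Tan}) the only tangent measure to $\lambda_\nu$ at $x_0$ along the right normalization is $\frac{\di\lambda_\nu}{\di\Lcal^d}(x_0)\,\Lcal^d$; one may need to choose the subsequence $r_n$ so that this particular tangent measure is realized, which is legitimate since $\Tan(\lambda_\nu,x_0)$ consists of multiples of $\Lcal^d$ and the normalization fixes the multiple. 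The barycenter statement $[\sigma] \in \Tan([\nu],x_0)$ then follows because the barycenter map is weak*-continuous on Young measures and $[\nu^{(x_0,r_n)}] = r_n^{-d} T^{(x_0,r_n)}_*[\nu]$ is exactly a blow-up sequence for the measure $[\nu]$; its weak* limit is therefore a tangent measure to $[\nu]$ at $x_0$. That $\sigma$ is actually a BD-Young measure, i.e.\ $\sigma \in \BDY_\loc(\R^d)$, follows because the rescaled generators $u_j^{(x_0,r_n)}(x) := r_n^{-1} u_j(x_0 + r_n x)$ (suitably corrected by rigid deformations) again lie in $\BD_\loc$ and generate $\sigma$ via a diagonal argument.

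Finally, formula~\eqref{eq:loc_reg_ddpr} is an immediate consequence: for $h \in \Crm(\R^{d\times d})$ with recession function $h^\infty$ existing, and $K$ compact with $\Lcal^d(\partial K) = 0$, the extended representation~\eqref{eq:ext_repr} (applied in the local setting) gives
\[
  \ddprb{\ONE_K \otimes h,\sigma} = \int_K \dprb{h,\sigma_y}\dd y + \int_K \dprb{h^\infty,\sigma_y^\infty}\dd\lambda_\sigma(y),
\]
and substituting $\sigma_y = \nu_{x_0}$, $\sigma_y^\infty = \nu_{x_0}^\infty$, $\lambda_\sigma = \frac{\di\lambda_\nu}{\di\Lcal^d}(x_0)\Lcal^d$ and computing the two integrals over $K$ yields the bracketed expression times $\abs{K}$. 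The main obstacle I expect is the normalization bookkeeping in the blow-up of the Young measure: one has to rescale the Lebesgue part by $r^{-d}$ (so that $\Lcal^d$ is reproduced) but treat $\lambda_\nu$ with the normalization appropriate for its own tangent measures, and verify that these two normalizations are compatible — precisely, that $\frac{\di\lambda_\nu}{\di\Lcal^d}(x_0)$ is finite for $\Lcal^d$-a.e.\ $x_0$ so that the $r^{-d}$-rescaling of $\lambda_\nu$ does not blow up — and then arrange the subsequence so that the correct tangent measure to $\lambda_\nu$ appears. Handling the boundary $\partial\Omega$ (ensuring $x_0 \in \Omega$ so no boundary mass of $\lambda_\nu$ interferes) is routine since we only claim the result for a.e.\ interior $x_0$.
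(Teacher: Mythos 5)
Your proposal is correct and follows essentially the same route as the paper: blow-up of the Young measure via rescaled generators, uniform mass bounds plus Lemma~\ref{lem:YM_compactness}~(ii) for compactness, identification of $\sigma_y$, $\sigma_y^\infty$, $\lambda_\sigma$ through Lebesgue-point properties and the vanishing of $\lambda_\nu^s(B(x_0,r))/r^d$, a diagonal argument for membership in $\BDY_\loc(\R^d)$, and~\eqref{eq:ext_repr} for the final formula. The only cosmetic difference is that the paper imposes the Lebesgue-point condition directly on the combined function $x \mapsto \dprb{h_k,\nu_x} + \dprb{h_k^\infty,\nu_x^\infty}\frac{\di\lambda_\nu}{\di\Lcal^d}(x)$ rather than on $\nu_x^\infty$ and the density separately, which is the cleaner way to make your identification step rigorous.
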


\begin{proof}
Take a set $\{ \phi_k \otimes h_k \} \subset \Ebf_c(\R^d;\R^{d \times d})$ determining the (local) Young measure convergence as in Section~\ref{ssc:YM}, and let $x_0 \in \R^d$ be as follows:
\begin{itemize}
  \item[(i)] There exists a sequence $r_n \todown 0$ such that (with $T^{(x_0,r)}(x) := (x-x_0)/r$)
\[
  \qquad \gamma_n := r_n^{-d} T_*^{(x_0,r_n)} [\nu] \quad\toweakstar\quad
  \frac{\di [\nu]}{\di \Lcal^d}(x_0) \, \Lcal^d
  \quad \in \Tan([\nu],x_0).
\]
  \item[(ii)] It holds that
\[
  \qquad \lim_{r \todown 0} \frac{\lambda_\nu^s(B(x_0,r))}{r^d} = 0  \qquad\text{and}\qquad
  \frac{\di \lambda_\nu}{\di \Lcal^d}(x_0) \, \Lcal^d \in \Tan(\lambda_\nu,x_0).
\]
  \item[(iii)] The point $x_0$ is a Lebesgue point for the functions
\[
  \qquad x \mapsto \dprb{h_k,\nu_x} + \dprb{h_k^\infty,\nu_x^\infty} \frac{\di \lambda_\nu}{\di \Lcal^d}(x),
  \qquad k \in \N.
\]
\end{itemize}
By results recalled in Section~\ref{ssc:Tan} and standard results in measure theory, the above three conditions can be satisfied simultaneously at $\Lcal^d$-almost every $x_0 \in \R^d$.

Take a BD-norm bounded generating sequence $(u_j) \subset \LD(\Omega)$ for $\nu$, i.e.\ $Eu_j \toY \nu$ (see for instance Lemma~\ref{lem:boundary_adjust}) and denote by $\tilde{u}_j \in \BD(\R^d)$ the extension by zero of $u_j$ onto all of $\R^d$. For each $n \in \N$ set
\[
  v_j^{(n)}(y) := \frac{\tilde{u}_j(x_0 + r_n y)}{r_n},  \qquad y \in \R^d.
\]
Testing with $\phi \in \Crm_c^1(\R^d)$, we perform a change of variables to see for all $k,l = 1,\ldots,d$,
\begin{align*}
  &\int \phi \dd \bigl[ Ev_j^{(n)} \bigr]_l^k
    = \frac{1}{2} \int \phi \dd \bigl[ \partial_k (v_j^{(n)})^l + \partial_l (v_j^{(n)})^k \bigr] \\
  &\qquad = - \frac{1}{2r_n} \int \partial_k \phi(y) \cdot \tilde{u}_j^l(x_0 + r_n y)
    + \partial_l \phi(y) \cdot \tilde{u}_j^k(x_0 + r_n y) \dd y \\
  &\qquad= - \frac{1}{2r_n^{d+1}} \int \partial_k \phi \Bigl( \frac{x-x_0}{r_n} \Bigr) \cdot \tilde{u}_j^l(x)
    + \partial_l \phi \Bigl( \frac{x-x_0}{r_n} \Bigr) \cdot \tilde{u}_j^k(x) \dd x \\
  &\qquad = \frac{1}{2r_n^d} \int \phi \Bigl( \frac{\frarg-x_0}{r_n} \Bigr)
    \dd \bigl( \partial_k \tilde{u}_j^l + \partial_l \tilde{u}_j^k \bigr)
    = \frac{1}{r_n^d} \int \phi \dd \bigl[ T_*^{(x_0,r_n)} E\tilde{u}_j \bigr]_l^k.
\end{align*}
Thus, also employing~\eqref{eq:pushforward_density_trafo},
\begin{align*}
  Ev_j^{(n)} &= r_n^{-d} T_*^{(x_0,r_n)} E \tilde{u}_j \\
  &= \Ecal u_j(x_0 + r_n \frarg) \, \Lcal^d + r_n^{-1} \bigl( u_j(x_0 + r_n \frarg)|_{\partial \Omega_n}
    \odot n_{\Omega_n} \bigr) \, \Hcal^{d-1} \restrict \partial \Omega_n,
\end{align*}
where $\Omega_n := r_n^{-1} (\Omega - x_0)$, $u_j(x_0 + r_n \frarg)|_{\partial \Omega_n}$ is the (inner) trace of the function $y \mapsto u_j(x_0 + r_n y)$ onto $\partial \Omega_n$, and $n_{\Omega_n} \colon \partial \Omega_n \to \Sbb^{d-1}$ is the unit inner normal to $\partial \Omega_n$.

We can use the previous formula together with a Poincar\'{e} inequality in $\BD$ and the boundedness of the $\BD$-trace operator, see Section~\ref{ssc:BD}, to get
\begin{equation} \label{eq:uj_Poincare_est}
\begin{aligned}
  \normb{v_j^{(n)}}_{\BD(\R^d)} &\leq C(n) \absb{Ev_j^{(n)}}(\R^d) = C(n) \absb{E \tilde{u}_j}(\R^d) \\
  &\leq C(n) \norm{u_j}_{\BD(\Omega)},
\end{aligned}
\end{equation}
where $C(n)$ absorbs all $n$-dependent constants (including $r_n^{-d}$). For fixed $n$, this last expression is $j$-uniformly bounded. Hence, we may select a subsequence of the $j$s (not explicitly named and depending on $n$) such that the sequence $(Ev_j^{(n)})_j$ generates a Young measure $\sigma^{(n)} \in \BDY(\R^d)$.

For every $\phi \otimes h \in \Ebf_c(\R^d;\R^{d \times d})$ let $n \in \N$ be so large that $\supp \phi \subset\subset \Omega_n$ (then the boundary measure in $Ev_j^{(n)}$ can be neglected), and calculate
\begin{align*}
  \ddprb{\phi \otimes h,\sigma^{(n)}}
    &= \lim_{j \to \infty} \int \phi(y) h \bigl( \Ecal v_j^{(n)}(y) \bigr) \dd y \\
  &= \lim_{j \to \infty} \int \phi(y) h \bigl( \Ecal u_j(x_0 + r_n y) \bigr) \dd y \\
  &= \lim_{j \to \infty} \frac{1}{r_n^d} \int \phi \Bigl( \frac{x - x_0}{r_n} \Bigr)
    h \bigl( \Ecal u_j(x) \bigr) \dd x \\
  &= \frac{1}{r_n^d} \, \ddprB{\phi \Bigl( \frac{\frarg- x_0}{r_n} \Bigr) \otimes h, \nu}.
\end{align*}
First, we examine the regular part of the last expression:
\begin{align*}
  &\frac{1}{r_n^d} \int \phi \Bigl( \frac{x - x_0}{r_n} \Bigr) \biggl[ \dprb{h,\nu_x}
    + \dprb{h^\infty,\nu_x^\infty} \frac{\di \lambda_\nu}{\di \Lcal^d}(x) \biggr] \dd x \\
  &\qquad = \int \phi(y) \biggl[ \dprb{h,\nu_{x_0 + r_n y}} + \dprb{h^\infty,\nu_{x_0 + r_n y}^\infty}
    \frac{\di \lambda_\nu}{\di \Lcal^d}(x_0 + r_n y) \biggr] \dd y,
\end{align*}
which as $n \to \infty$ (hence $r_n \todown 0$) converges to
\[
  \int \phi(y) \biggl[ \dprb{h,\nu_{x_0}} + \dprb{h^\infty,\nu_{x_0}^\infty}
    \frac{\di \lambda_\nu}{\di \Lcal^d}(x_0) \biggr] \dd y.
\]
The latter convergence first holds for the collection of $\phi_k \otimes h_k$ by the corresponding Lebesgue point properties of $x_0$, and then also for all $\phi \otimes h \in \Ebf_c(\R^d;\R^{d \times d}$) by density.

For the singular part, let $\beta > 0$ be so large that $\supp \phi \subset B(0,\beta)$ and observe by virtue of assumption~(ii) on $x_0$ that as $n \to \infty$,
\begin{align*}
  &\absBB{\frac{1}{r_n^d} \int \phi \Bigl( \frac{x - x_0}{r_n} \Bigr)
    \dprb{h^\infty,\nu_x^\infty} \dd \lambda_\nu^s(x) } 
  \leq M \norm{\phi}_\infty \cdot \frac{\lambda_\nu^s(B(x_0,\beta r_n))}{r_n^d}  \quad\to\quad 0,
\end{align*}
where $M := \sup \setn{\absn{h^\infty(A)}}{A \in \partial \Bbb^{d \times d}}$ is the linear growth constant of $h$.

In particular, we have proved so far that
\[
  \sup_{n \in \N} \, \absb{\ddprb{\phi \otimes \abs{\frarg}, \sigma^{(n)}}} < \infty
  \qquad\text{for all $\phi \in \Crm_c(\R^d)$.}
\]
Thus, by the Young measure compactness, see Lemma~\ref{lem:YM_compactness}~(ii), selecting a further subsequence if necessary, we may assume that $\sigma^{(n)} \toweakstar \sigma$ for some Young measure $\sigma \in \Ybf_\loc(\R^d;\R^{d \times d})$. From a diagonal argument we get that in fact $\sigma \in \BDY_\loc(\R^d)$. We also have $[\sigma] \in \Tan([\nu],x_0)$, because $[\sigma^{(n)}] = \gamma_n$ plus a jump part that moves out to infinity in the limit. This proves the first assertion in~\eqref{eq:loc_reg_1}.

Our previous considerations yield
\[
  \ddprb{\phi \otimes h,\sigma} = \int \phi(y) \biggl[ \dprb{h,\nu_{x_0}}
    + \dprb{h^\infty,\nu_{x_0}^\infty} \frac{\di \lambda_\nu}{\di \Lcal^d}(x_0) \biggr] \dd y
\]
for all $\phi \otimes h \in \Ebf_c(\R^d;\R^{d \times d})$. Varying first $\phi$ and then $h$, we see that $\sigma_y = \nu_{x_0}$ and $\sigma_y^\infty = \nu_{x_0}^\infty$ hold for $\Lcal^d$-almost every $y \in \R^d$, i.e.\ the second assertions in~\eqref{eq:loc_reg_1} and~\eqref{eq:loc_reg_2}, respectively. The first assertion from~\eqref{eq:loc_reg_2} follows, since the previous formula also implies $\lambda_\sigma = \frac{\di \lambda_\nu}{\di \Lcal^d}(x_0) \, \Lcal^d$ and the latter measure lies in $\Tan(\lambda_\nu,x_0)$. Finally, as an immediate consequence of~\eqref{eq:loc_reg_1} and~\eqref{eq:loc_reg_2}, in conjunction with~\eqref{eq:ext_repr}, we get~\eqref{eq:loc_reg_ddpr}. This concludes the proof.
\end{proof}

\subsection{Localization principle at singular points}

We now turn to \enquote{singular} points, i.e.\ points in the support of the singular part of the concentration measure $\lambda_\nu$ of a Young measure $\nu$.

\begin{proposition}[Localization at singular points] \label{prop:localize_sing}
Let $\nu \in \BDY(\Omega)$ be a BD-Young measure. Then, for $\lambda_\nu^s$-almost every $x_0 \in \Omega$, there exists a \term{singular tangent Young measure} $\sigma \in \BDY_\loc(\R^d)$ satisfying
\begin{align}
  [\sigma] &\in \Tan([\nu],x_0),  &\sigma_y &= \delta_0 \quad\text{a.e.,} \label{eq:loc_sing_1} \\
  \lambda_\sigma &\in \Tan(\lambda_\nu^s,x_0) \setminus \{0\}, &\sigma_y^\infty &= \nu_{x_0}^\infty
    \quad\text{$\lambda_\sigma$-a.e.} \label{eq:loc_sing_2}
\end{align}
In particular, for all bounded open sets $U \subset \R^d$ with $(\Lcal^d + \lambda_\sigma)(\partial U) = 0$ and all positively $1$-homogeneous $g \in \Crm(\R_\sym^{d \times d})$ it holds that
\begin{equation} \label{eq:loc_sing_ddpr}
  \ddprb{\ONE_U \otimes g, \sigma} = \dprb{g,\nu_{x_0}^\infty} \, \lambda_\sigma(U).
\end{equation}
\end{proposition}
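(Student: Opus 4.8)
The plan is to follow the blueprint of the regular-point localization (Proposition~\ref{prop:localize_reg}), but to rescale the blow-up maps by the \emph{singular} concentration rather than by $r_n^{-d}$. First I would fix a generating sequence $(u_j) \subset (\Wrm^{1,1} \cap \Crm^\infty)(\Omega;\R^d)$ with $Eu_j \toY \nu$ (via Lemma~\ref{lem:boundary_adjust}, after discarding the $\lambda_\nu$-null boundary of a slightly shrunk domain if needed), extend each $u_j$ by zero to $\tilde u_j \in \BD(\R^d)$, and select a \enquote{good} point $x_0 \in \supp \lambda_\nu^s$. Goodness should entail: (a) by Preiss' theorem applied to $\lambda_\nu^s$ together with the normalization recalled in Section~\ref{ssc:Tan}, there is a sequence $r_n \downarrow 0$ and a bounded open $U_0 \ni 0$ with $\lambda_\nu^s(x_0 + r_n \cl{U_0})^{-1} T^{(x_0,r_n)}_* \lambda_\nu^s \toweakstar \kappa$ for some nonzero $\kappa \in \Tan(\lambda_\nu^s,x_0)$; (b) $x_0$ is a point where the Radon--Nikod\'ym derivative of the \emph{absolutely continuous} part of $[\nu]$ with respect to $\lambda_\nu$, and of $\Lcal^d$ with respect to $\lambda_\nu$, both vanish in the blow-up (this is where $\lim_{r\downarrow 0} r^d/\lambda_\nu^s(B(x_0,r)) = 0$ and the absolute continuity of $\Ecal u\,\Lcal^d$ with respect to $\Lcal^d$ feed in); (c) $x_0$ is a $\lambda_\nu^s$-Lebesgue point of $x \mapsto \dprb{h_k^\infty, \nu_x^\infty}$ for the countable determining family $\{\phi_k \otimes h_k\}$; (d) the polar $\frac{\di \lambda_\nu^s}{\di \abs{\lambda_\nu^s}}$ (trivial here since $\lambda_\nu^s \geq 0$) behaves, and $\sup_j \norm{u_j}_{\BD}$ is finite. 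All of these hold $\lambda_\nu^s$-a.e.\ by the facts collected in Section~\ref{ssc:Tan} and standard Lebesgue differentiation.

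Next I would set $c_n := \lambda_\nu^s(x_0 + r_n\cl{U_0})^{-1}$ and define $v_j^{(n)}(y) := c_n r_n^{-1}\, \tilde u_j(x_0 + r_n y)$; the same integration-by-parts computation as in the proof of Proposition~\ref{prop:localize_reg} (now carrying the extra factor $c_n r_n^{d}$, which is exactly the rescaling constant) gives $E v_j^{(n)} = c_n\, T^{(x_0,r_n)}_* E\tilde u_j$, i.e.\ the correctly-normalized blow-up of the measure $E\tilde u_j$. For each fixed $n$ the uniform Poincar\'e-type bound~\eqref{eq:uj_Poincare_est} (with an $n$-dependent constant absorbing $c_n r_n^{d}$ instead of $r_n^{-d}$) lets me extract a $j$-subsequence along which $(Ev_j^{(n)})_j$ generates some $\sigma^{(n)} \in \BDY(\R^d)$, and then compute, for any $\phi \otimes h \in \Ebf_c(\R^d;\R_\sym^{d\times d})$ with $\supp\phi$ inside $\Omega_n := r_n^{-1}(\Omega - x_0)$,
\[
  \ddprb{\phi \otimes h, \sigma^{(n)}}
  = c_n r_n^{d}\, \ddprB{\phi\Bigl(\frac{\frarg - x_0}{r_n}\Bigr) \otimes h,\nu}.
\]
Splitting the right-hand side into its regular, its $\lambda_\nu^s$-singular, and (if $\lambda_\nu$ has a nontrivial purely atomic-free-but-singular-with-respect-to-$\Lcal^d$ part — here it is all of $\lambda_\nu^s$) the two pieces behave as follows. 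The regular part is $c_n r_n^{d} \int \phi\bigl(\tfrac{x-x_0}{r_n}\bigr)[\dprb{h,\nu_x} + \dprb{h^\infty,\nu_x^\infty}\tfrac{\di\lambda_\nu}{\di\Lcal^d}(x)]\dd x$, which after the change of variables is $c_n r_n^{d} \cdot r_n^d \cdot (\text{bounded})$ and tends to $0$ by goodness~(b), since $c_n r_n^{2d}/1 \to 0$ when $c_n \sim \lambda_\nu^s(B(x_0, C r_n))^{-1}$ and $r_n^d = o(\lambda_\nu^s(B(x_0,Cr_n)))$. The singular part equals $\int \phi(y) \dprb{h^\infty, \nu^\infty_{x_0 + r_n y}}\, \dd\bigl(c_n T^{(x_0,r_n)}_* \lambda_\nu^s\bigr)(y)$, and here the $\lambda_\nu^s$-Lebesgue point property~(c) lets me replace $\nu^\infty_{x_0 + r_n y}$ by the constant $\nu^\infty_{x_0}$ up to an error that vanishes (standard argument: test against the determining family and use that $c_n T^{(x_0,r_n)}_*\lambda_\nu^s \toweakstar \kappa$ is bounded on compacta), leaving $\dprb{h^\infty,\nu^\infty_{x_0}} \int \phi \dd\bigl(c_n T^{(x_0,r_n)}_*\lambda_\nu^s\bigr) \to \dprb{h^\infty,\nu^\infty_{x_0}} \dprb{\phi,\kappa}$.

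Therefore $\ddprb{\phi \otimes h, \sigma^{(n)}} \to \dprb{h^\infty, \nu^\infty_{x_0}} \dprb{\phi,\kappa}$ for every $\phi \otimes h \in \Ebf_c$; in particular $\sup_n \absb{\ddprb{\phi \otimes \abs{\frarg}, \sigma^{(n)}}} < \infty$ for all $\phi \in \Crm_c(\R^d)$, so Lemma~\ref{lem:YM_compactness}~(ii) plus a diagonal argument over the determining family yields a subsequence with $\sigma^{(n)} \toweakstar \sigma \in \BDY_\loc(\R^d)$ satisfying $\ddprb{\phi \otimes h, \sigma} = \dprb{h^\infty, \nu^\infty_{x_0}}\dprb{\phi,\kappa}$. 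Reading off: varying $\phi$ shows $\lambda_\sigma = \kappa \in \Tan(\lambda_\nu^s, x_0) \setminus \{0\}$ and, since the $h$-dependence is only through $h^\infty$ (so the regular-in-$A$ content of $\sigma_y$ is invisible), that $\dprb{h, \sigma_y} = h(0)$ for a.e.\ $y$, i.e.\ $\sigma_y = \delta_0$ a.e.; and varying $h$ shows $\sigma^\infty_y = \nu^\infty_{x_0}$ for $\lambda_\sigma$-a.e.\ $y$. This gives~\eqref{eq:loc_sing_1}--\eqref{eq:loc_sing_2}; that $[\sigma] \in \Tan([\nu],x_0)$ follows because $[\sigma^{(n)}] = c_n T^{(x_0,r_n)}_* [\nu]$ up to a jump term escaping to infinity, and $c_n T^{(x_0,r_n)}_*[\nu]$ converges (along the chosen subsequence) to an element of $\Tan([\nu],x_0)$ by definition. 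Finally~\eqref{eq:loc_sing_ddpr} is immediate from~\eqref{eq:loc_sing_1}--\eqref{eq:loc_sing_2} and the extended-representation property~\eqref{eq:ext_repr} applied to $\ONE_U \otimes g$ with $g$ positively $1$-homogeneous (so $g = g^\infty$ on $\R_\sym^{d\times d}$ and the $\sigma_y = \delta_0$ part contributes $g(0)\cdot\Lcal^d(U) = 0$ when $g$ is $1$-homogeneous, hence only the $\lambda_\sigma$-term survives).

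The main obstacle I anticipate is the bookkeeping of the rescaling constants: making sure that $c_n = \lambda_\nu^s(x_0 + r_n\cl{U_0})^{-1}$ is simultaneously (i) the right normalization so that $c_n T^{(x_0,r_n)}_*\lambda_\nu^s$ converges to a nonzero tangent measure, (ii) such that $c_n r_n^{2d} \to 0$ so the regular part dies, and (iii) compatible with the Poincar\'e bound used to generate the intermediate $\sigma^{(n)}$. Item (ii) is the delicate one and is precisely where the hypothesis $\lim_{r\downarrow 0} r^d / \lambda_\nu^s(B(x_0,r)) = 0$ (equivalently, that $\lambda_\nu^s$ has no absolutely continuous part at $x_0$, which holds $\lambda_\nu^s$-a.e.) must be invoked; one also needs that the absolutely continuous part $\Ecal u \, \Lcal^d$ of $[\nu]$ contributes $\BigO(r_n^d) = \SmallO(\lambda_\nu^s(B(x_0,Cr_n)))$, i.e.\ the same estimate. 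A secondary subtlety is handling the boundary measure in $Ev_j^{(n)}$: once $\supp\phi \subset\subset \Omega_n$ it drops out, exactly as in Proposition~\ref{prop:localize_reg}, so no new difficulty arises there, but one should note that $\Omega_n \uparrow \R^d$ since $x_0 \in \Omega$ is interior.
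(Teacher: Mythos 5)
Your overall strategy is the paper's: blow up with the normalization $c_n = \lambda_\nu^s(x_0+r_n\cl{U_0})^{-1}$ coming from Preiss' theorem, kill the regular part using that $\Lcal^d$-absolutely continuous quantities have vanishing $\lambda_\nu^s$-normalized averages, and identify the singular part via $\lambda_\nu^s$-Lebesgue points of $x \mapsto \dprb{g_k,\nu_x^\infty}$. However, there is a genuine gap in the step where you identify the oscillation part $\sigma_y$. The transformation identity you write,
\[
  \ddprb{\phi \otimes h, \sigma^{(n)}} = c_n r_n^{d}\, \ddprB{\phi\Bigl(\tfrac{\frarg - x_0}{r_n}\Bigr) \otimes h,\nu},
\]
is false for general $h \in \Ebf_c$: after the change of variables the argument of $h$ is multiplied by $r_n^d c_n$, and this factor can only be pulled out when $h$ is positively $1$-homogeneous (in which case the correct prefactor is $c_n$, not $c_n r_n^d$). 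Consequently your claimed limit $\ddprb{\phi\otimes h,\sigma} = \dprb{h^\infty,\nu_{x_0}^\infty}\dprb{\phi,\kappa}$ for \emph{all} $\phi\otimes h \in \Ebf_c$ cannot hold — test it with $h \equiv 1$, for which the left side is $\int\phi\,\di y$ and the right side is $0$; the correct identity must carry the extra term $h(0)\int\phi\,\di y$. Reading off $\sigma_y = \delta_0$ from your (false) identity would actually force $\dprb{h,\sigma_y} = 0$ for all bounded $h$, i.e.\ $\sigma_y = 0$, contradicting that $\sigma_y$ is a probability measure. The conclusion $\sigma_y = \delta_0$ is of course true, but it needs its own argument: the paper tests $\sigma^{(n)}$ against $\phi\otimes\abs{\frarg}\chi(\frarg)$ with $\chi \in \Crm_c(\R_\sym^{d\times d})$, exploits that the argument of the integrand is rescaled by $r_n^d c_n \to 0$ (so the compactly supported $\chi$ annihilates the concentration part while the regular part dies by the same averaging estimate), and concludes $\ddprb{\phi\otimes\abs{\frarg}\chi,\sigma} = 0$. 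You need to supply this (or an equivalent) step; restricting your clean transformation formula to positively $1$-homogeneous $g$ and treating $\sigma_y$ separately is exactly the repair.

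Two smaller points. First, your definition $v_j^{(n)}(y) := c_n r_n^{-1}\tilde u_j(x_0+r_ny)$ gives $Ev_j^{(n)} = c_n r_n^{-d} T_*^{(x_0,r_n)} E\tilde u_j$; the correct scaling producing $Ev_j^{(n)} = c_n T_*^{(x_0,r_n)} E\tilde u_j$ is $v_j^{(n)}(y) := r_n^{d-1} c_n\,\tilde u_j(x_0+r_ny)$. Second, your quantitative justification that the regular part vanishes (bounding $\int_{B(x_0,Cr_n)}\dprb{\abs{\frarg},\nu_x}\dd x$ by $\BigO(r_n^d)$) is not available, since that integrand is only in $\Lrm^1$; the correct statement, which you do name in your condition (b), is that for $\lambda_\nu^s$-a.e.\ $x_0$ one has $\lambda_\nu^s(B(x_0,r))^{-1}\int_{B(x_0,r)}\bigl(1+\dprb{\abs{\frarg},\nu_x}+\tfrac{\di\lambda_\nu}{\di\Lcal^d}\bigr)\dd x \to 0$ by Besicovitch differentiation of mutually singular measures — use that directly, together with the uniform bound $\limsup_n c_n\lambda_\nu^s(B(x_0,Nr_n)) \leq \beta_N$, rather than the $\BigO(r_n^d)$ heuristic.
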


\begin{proof}
Take a dense and countable set $\{ g_k \} \subset \Crm(\partial\Bbb^{d \times d})$ and consider all $g_k$ to be extended to $\R^{d \times d}$ by positive $1$-homogeneity. Then, let $x_0 \in \supp \lambda_\nu^s$ be such that:
\begin{itemize}
  \item[(i)] There exist sequences $r_n \todown 0$, $c_n > 0$ and $\lambda_0 \in \Tan(\lambda_\nu^s,x_0) \setminus \{0\}$ such that
\begin{equation} \label{eq:lambda_TM}
  \qquad c_n T_*^{(x_0,r_n)} \lambda_\nu^s  \quad\toweakstar\quad  \lambda_0.
\end{equation}
  \item[(ii)] It holds that
\begin{equation} \label{eq:singular_blowup}
  \qquad \lim_{r \todown 0} \frac{1}{\lambda_\nu^s(B(x_0,r))} \int_{B(x_0,r)} 1 + \dprb{\abs{\frarg},\nu_x}
    + \frac{\di \lambda_\nu}{\di \Lcal^d}(x) \dd x = 0.
\end{equation}
  \item[(iii)] The point $x_0$ is a $\lambda_\nu^s$-Lebesgue point for the functions
\[
  \qquad x \mapsto \dprb{\id,\nu_x^\infty}  \qquad\text{and}\qquad
  x \mapsto \dprb{g_k,\nu_x^\infty},  \quad k \in \N.
\]
\end{itemize}
By the usual measure-theoretic results and Preiss's existence theorem for non-zero tangent measures, see Theorem~2.5 in~\cite{Prei87GMDR} or the appendix, this can be achieved at $\lambda_\nu^s$-almost every $x_0 \in \Omega$.

The constants $c_n$ in~\eqref{eq:lambda_TM} can always be chosen as
\[
  c_n = c \bigl[ \lambda_\nu^s(\cl{B(x_0,Rr_n)}) \bigr]^{-1}  \qquad\text{($< \infty$)}
\]
for some fixed $R > 0$, $c > 0$, such that $\lambda_0(B(0,R)) > 0$, see Remark~14.4~(i) in~\cite{Matt95GSME} (recalled in Section~\ref{ssc:Tan}). Also notice that we may increase $R$ such that $\lambda_\nu^s(\partial B(x_0,Rr_n)) = 0$ for all $n$. In conjunction with~\eqref{eq:lambda_TM} this further yields for each $n \in \N$ the existence of a constant $\beta_N > 0$ satisfying
\[
  \limsup_{n\to\infty} \, c \cdot \frac{\lambda_\nu^s(B(x_0,Nr_n))}{\lambda_\nu^s(B(x_0,Rr_n))} \leq \beta_N.
\]
Combining this with~\eqref{eq:singular_blowup}, we get
\begin{align*}
  &\limsup_{n\to\infty} \, c_n \ddprb{\ONE_{B(x_0,Nr_n)} \otimes \abs{\frarg},\nu} \\
  &\qquad = \limsup_{n\to\infty} \Biggl[ \frac{c}{\lambda_\nu^s(B(x_0,Rr_n))} \int_{B(x_0,Nr_n)} \dprb{\abs{\frarg},\nu_x}
    + \frac{\di \lambda_\nu}{\di \Lcal^d}(x) \dd x \\
  &\qquad\qquad\qquad\qquad + \frac{c}{\lambda_\nu^s(B(x_0,Rr_n))} \, \lambda_\nu^s(B(x_0,Nr_n)) \Biggr] \\
  &\qquad\leq 0 + \beta_N.
\end{align*}
Hence, for all $N \in \N$,
\begin{equation} \label{eq:nu_blowup_bdd}
  \limsup_{n\to\infty} \, c_n \lambda_\nu^s(B(x_0,Nr_n))
  \leq \limsup_{n\to\infty} \, c_n \ddprb{\ONE_{B(x_0,Nr_n)} \otimes \abs{\frarg},\nu} \leq \beta_N
\end{equation}
Furthermore,
\[
  \limsup_{n\to\infty} \, \bigl( c_n T_*^{(x_0,r_n)} \abs{[\nu]} \bigr)(B(0,N)) \leq \beta_N,
  \qquad\text{for all $N \in \N$,}
\]
and so, taking a (non-relabeled) subsequence of the $r_n$, we may assume
\begin{equation} \label{eq:tau_TM}
  c_n T_*^{(x_0,r_n)} [\nu]  \quad\toweakstar\quad  \tau  \quad \in \Tan([\nu],x_0).
\end{equation}
Notice that $\tau$ might be the non-zero ($\tau \neq 0$ could only be ensured for $[\nu]$-almost every $x_0 \in \supp \, [\nu]$, but not necessarily for $\lambda_\nu^s$-almost every $x_0 \in \Omega$).

For a norm-bounded generating sequence $(u_j) \subset \LD(\Omega)$ of $\nu$, that is $Eu_j \toY \nu$, we denote by $\tilde{u} \in \BD(\R^d)$ the extension by zero, and set
\[
  v_j^{(n)}(y) := r_n^{d-1} c_n \tilde{u}_j(x_0 + r_n y),  \qquad y \in \R^d.
\]
We can then compute, similary to the localization principle for regular points,
\begin{align*}
  Ev_j^{(n)} &= c_n T_*^{(x_0,r_n)} E \tilde{u}_j \\
  &= r_n^d c_n \Ecal u_j(x_0 + r_n \frarg) \, \Lcal^d \\
  &\qquad + r_n^{d-1} c_n \bigl( u_j(x_0 + r_n \frarg)|_{\partial \Omega_n} \odot n_{\Omega_n} \bigr) \, \Hcal^{d-1} \restrict \Omega_n,
\end{align*}
where as before $\Omega_n := r_n^{-1}(\partial \Omega - x_0)$. Completely analogously to~\eqref{eq:uj_Poincare_est} we may also derive
\[
  \normb{v_j^{(n)}}_{\BD(\R^d)} \leq C(n) \norm{u_j}_{\BD(\Omega)}.
\]
The latter estimate implies that up to an $n$-dependent subsequence of $j$s, $(Ev_j^{(n)})_j$ generates a Young measure $\sigma^{(n)} \in \BDY(\R^d)$.

Let $g \in \Crm(\R^{d \times d})$ be positively $1$-homogeneous and let $\phi \in \Crm_c(\R^d)$. Then we have for all $n$ so large that $\supp \phi \subset\subset \Omega_n$ (and hence we may neglect the boundary jump part of $Ev_j^{(n)}$),
\begin{equation} \label{eq:sigma_n_trafo}
\begin{aligned}
  \ddprb{\phi \otimes g, \sigma^{(n)}} &= \lim_{j\to\infty} \int \phi(y) g \bigl( \Ecal v_j^{(n)}(y) \bigr) \dd y \\
  &= \lim_{j\to\infty} r_n^d c_n \int \phi(y) g \bigl( \Ecal u_j(x_0 + r_n y) \bigr) \dd y \\
  &= \lim_{j\to\infty} c_n \int \phi \Bigl( \frac{x-x_0}{r_n} \Bigr) g \bigl( \Ecal u_j(x) \bigr) \dd y \\
  &= c_n \, \ddprB{ \phi \Bigl( \frac{\frarg-x_0}{r_n} \Bigr) \otimes g, \nu }.
\end{aligned}
\end{equation}
For the regular part of the last expression, set $M := \sup \setn{\abs{g(A)}}{A \in \partial \Bbb^{d \times d}}$ and choose $N \in \N$ so large that $\supp \phi \subset B(0,N)$. Possibly increasing $n$ as to ensure
\[
  c_n \lambda_\nu^s(B(x_0,N r_n)) \leq \beta_N + 1,
\]
see~\eqref{eq:nu_blowup_bdd}, we have
\begin{equation} \label{eq:reg_part_est}
\begin{aligned}
  &\absBB{c_n \int \phi \Bigl( \frac{x-x_0}{r_n} \Bigr) \biggl[ \dprb{g,\nu_x} 
    + \dprb{g,\nu_x^\infty} \frac{\di \lambda_\nu}{\di \Lcal^d}(x) \biggr] \dd x } \\
  &\qquad \leq c_n M \norm{\phi}_\infty \int_{B(x_0,Nr_n)} \dprb{\abs{\frarg},\nu_x}
    + \frac{\di \lambda_\nu}{\di \Lcal^d}(x) \dd x \\
  &\qquad \leq \frac{M \norm{\phi}_\infty (\beta_N + 1)}{\lambda_\nu^s(B(x_0,Nr_n))}
    \int_{B(x_0,Nr_n)} \dprb{\abs{\frarg},\nu_x}
    + \frac{\di \lambda_\nu}{\di \Lcal^d}(x) \dd x \\
  &\to 0  \qquad\text{as $n \to \infty$,}
\end{aligned}
\end{equation}
the convergence following by virtue of~\eqref{eq:singular_blowup}. Hence, we get from~\eqref{eq:sigma_n_trafo},
\begin{equation} \label{eq:lim_sigma_n}
  \limsup_{n\to\infty} \, \ddprb{\phi \otimes g, \sigma^{(n)}} = \limsup_{n\to\infty} \,
  c_n \int \phi \Bigl( \frac{x-x_0}{r_n} \Bigr) \dprb{g,\nu_x^\infty} \dd \lambda_\nu^s(x).
\end{equation}
Taking $g = \abs{\frarg}$ in the previous equality,
\begin{align*}
  \limsup_{n\to\infty} \, \ddprb{\phi \otimes \abs{\frarg}, \sigma^{(n)}}
  &= \limsup_{n\to\infty} \, c_n \int \phi \Bigl( \frac{x-x_0}{r_n} \Bigr) \dd \lambda_\nu^s(x) \\
  &= \limsup_{n\to\infty} \, \int \phi \dd \bigl( c_n T_*^{(x_0,r_n)} \lambda_\nu^s \bigr)
    = \int \phi \dd \lambda_0,
\end{align*}
where the convergence follows from~\eqref{eq:lambda_TM}. In particular, $\ddprn{\phi \otimes \abs{\frarg}, \sigma^{(n)}}$ is uniformly bounded by $\norm{\phi}_\infty \lambda_0(\supp \phi)$, and hence by the Young measure compactness there exists a subsequence of the $r_n$s (not relabeled) with
\[
  \sigma^{(n)}  \quad\toweakstar\quad  \sigma  \qquad\text{in $\Ybf_\loc(\R^d;\R^{d \times d})$.}
\]
Again by a diagonal argument, we see $\sigma \in \BDY_\loc(\R^d)$. From~\eqref{eq:lim_sigma_n} we also get
\begin{equation} \label{eq:sigma_lim}
  \ddprb{\phi \otimes g, \sigma} = \lim_{n\to\infty} c_n \int \phi \Bigl( \frac{x-x_0}{r_n} \Bigr)
  \dprb{g,\nu_x^\infty} \dd \lambda_\nu^s(x).
\end{equation}

We now turn to the verification of~\eqref{eq:loc_sing_1} and~\eqref{eq:loc_sing_2}. The barycenters of the $\sigma^{(n)}$ satisfy
\[
  [\sigma^{(n)}] = c_n T_*^{(x_0,r_n)} [\nu] + \mu_n,
\]
where $\mu_n \in \Mbf(\partial \Omega_n;\R_\sym^{d \times d})$ are boundary measures satisfying $\mu_n \toweakstar 0$. Hence, by~\eqref{eq:tau_TM}, $[\sigma^{(n)}] \toweakstar \tau$ as $n \to \infty$ and so $[\sigma] = \tau \in \Tan([\nu],x_0)$, which is the first assertion of~\eqref{eq:loc_sing_1}.

For the second assertion of~\eqref{eq:loc_sing_1}, take cut-off functions $\phi \in \Crm_c(\R^d;[0,1])$, $\chi \in \Crm_c(\R_\sym^{d \times d};[0,1])$ and calculate similarly to~\eqref{eq:sigma_n_trafo},
\begin{align*}
  \ddprb{\phi \otimes \abs{\frarg}\chi(\frarg), \sigma^{(n)}} &= \lim_{j\to\infty}
    c_n \int \phi \Bigl( \frac{x-x_0}{r_n} \Bigr)
    \absb{ \Ecal u_j(x) } \chi \bigl( r_n^d c_n \Ecal u_j(x) \bigr) \dd x \\
  &= c_n \ddprB{ \phi \Bigl( \frac{\frarg-x_0}{r_n} \Bigr) \otimes \abs{\frarg} \chi(r_n^d c_n \frarg), \nu }.
\end{align*}
Then use a reasoning analogous to~\eqref{eq:reg_part_est} to see that the regular part of the previous expression converges to zero as $n \to \infty$. On the other hand, because $\chi$ has compact support in $\R_\sym^{d \times d}$, the singular part is identically zero. So we have shown
\[
  \ddprb{\phi \otimes \abs{\frarg}\chi(\frarg), \sigma} = 0
\]
for all $\phi, \chi$ as above. Hence, $\sigma_y = \delta_0$ for $\Lcal^d$-almost every $y \in \R^d$.

To see the first assertion from~\eqref{eq:loc_sing_2}, plug $g := \abs{\frarg}$ into~\eqref{eq:sigma_lim} and use $\sigma_y = \delta_0$ almost everywhere to derive for any $\phi \in \Crm_c(\R^d)$,
\begin{align*}
  \int \phi \dd \lambda_\sigma &= \ddprb{\phi \otimes \abs{\frarg}, \sigma}
    = \lim_{n\to\infty} c_n \int \phi \Bigl( \frac{x-x_0}{r_n} \Bigr) \dd \lambda_\nu^s(x) \\
  &= \lim_{n\to\infty} \, \int \phi \dd \bigl( c_n T_*^{(x_0,r_n)} \lambda_\nu^s \bigr)
    = \int \phi \dd \lambda_0,
\end{align*}
the last equality by~\eqref{eq:lambda_TM}. Hence, $\lambda_\sigma = \lambda_0 \in \Tan(\lambda_\nu^s,x_0)$.

We postpone the verification of the second assertion of~\eqref{eq:loc_sing_2} for a moment and instead turn to the verification of~\eqref{eq:loc_sing_ddpr} first. Let $U \subset \R^d$ be a bounded open set with $(\Lcal^d + \lambda_\sigma)(\partial U) = 0$. If $\lambda_\sigma(U) = 0$, then~\eqref{eq:loc_sing_ddpr} holds trivially, so assume $\lambda_\sigma(U) > 0$. Use $\phi = \ONE_U$ in~\eqref{eq:sigma_lim}, which is allowed by virtue of~\eqref{eq:ext_repr}, to get
\[
  \int_U \dprb{g,\sigma_y^\infty} \dd \lambda_\sigma(y) = \ddprb{\ONE_U \otimes g, \sigma}
  = \lim_{n\to\infty} c_n \int_{x_0 + r_n U} \dprb{g,\nu_x^\infty} \dd \lambda_\nu^s(x).
\]
Because $\lambda_\sigma \in \Tan(\lambda_\nu^s,x_0)$ and $\lambda_\sigma(U) > 0$, well-known results on tangent measures (see Section~\ref{ssc:Tan}) imply that $c_n = \tilde{c}(U) [\lambda_\nu^s(x_0 + r_n U)]^{-1}$ for some constant $\tilde{c}(U) > 0$. With this, the right hand side is
\begin{align*}
  \lim_{n\to\infty} c_n \int_{x_0 + r_n U} \dprb{g,\nu_x^\infty} \dd \lambda_\nu^s(x)
    &= \lim_{n\to\infty} \tilde{c}(U) \, \dashint_{x_0 + r_n U} \dprb{g,\nu_x^\infty} \dd \lambda_\nu^s(x) \\
  &= \tilde{c}(U) \, \dprb{g,\nu_{x_0}^\infty}
\end{align*}
by the Lebesgue point properties of $x_0$ (first ascertain this for the collection $\{g_k\}$ and then for the general case). Hence we have shown
\[
  \ddprb{\ONE_U \otimes g, \sigma} = \int_U \dprb{g,\sigma_y^\infty} \dd \lambda_\sigma(y) = \tilde{c}(U) \, \dprb{g,\nu_{x_0}^\infty}
\]
and testing this with $g = \abs{\frarg}$, we get $\tilde{c}(U) = \lambda_\sigma(U)$. Thus we have proved~\eqref{eq:loc_sing_ddpr}. But clearly, varying $U$ and $g$, this also implies $\sigma_y^\infty = \nu_{x_0}^\infty$ for $\lambda_\sigma$-almost every $y \in \R^d$, which is the second assertion of~\eqref{eq:loc_sing_2}.
\end{proof}

\section{Construction of good singular blow-ups} \label{sc:good_blowups}

This section combines the localization principles with rigidity arguments to show that among the possibly many singular tangent Young measures of a BD-Young measure $\nu \in \BDY(\Omega)$, there are always \enquote{good} ones at $\lambda_\nu^s$-almost every point $x_0 \in \Omega$. More concretely, we will construct blow-ups that are either affine or that are sums of one-directional functions, see Figure~\ref{fig:singular_blowups}. Some concrete differential inclusions involving the symmetrized gradient $\Ecal u$ in two dimensions are treated more elaborately in Section~\ref{ssc:rigidity_2D} for illustration purposes.

\begin{figure}[t]
\centering
\begingroup
  \makeatletter
  \providecommand\color[2][]{%
    \errmessage{(Inkscape) Color is used for the text in Inkscape, but the package 'color.sty' is not loaded}
    \renewcommand\color[2][]{}%
  }
  \providecommand\transparent[1]{%
    \errmessage{(Inkscape) Transparency is used (non-zero) for the text in Inkscape, but the package 'transparent.sty' is not loaded}
    \renewcommand\transparent[1]{}%
  }
  \providecommand\rotatebox[2]{#2}
  \ifx\svgwidth\undefined
    \setlength{\unitlength}{356.76689453pt}
  \else
    \setlength{\unitlength}{\svgwidth}
  \fi
  \global\let\svgwidth\undefined
  \makeatother
  \begin{picture}(1,0.78751704)%
    \put(0,0){\includegraphics[width=\unitlength]{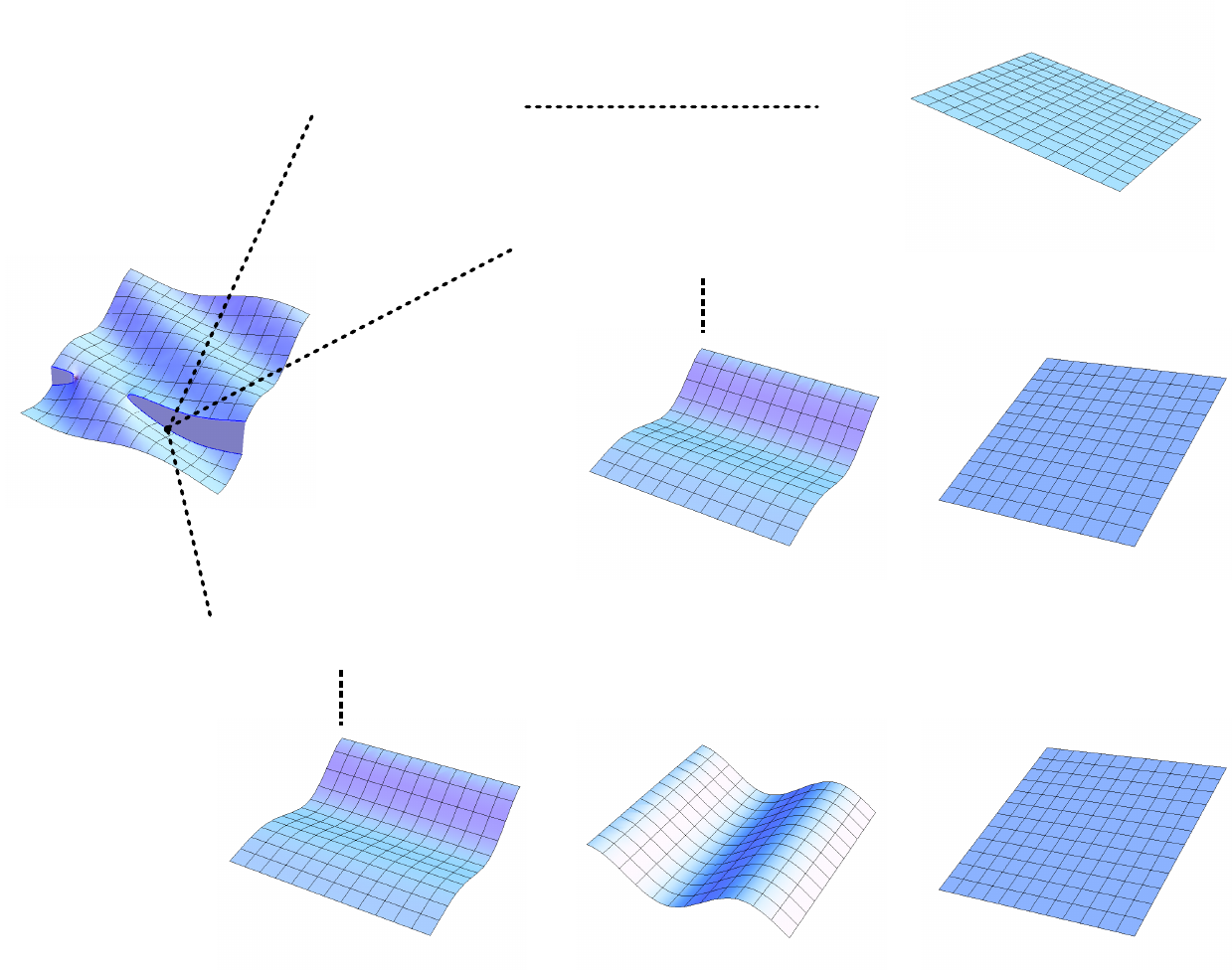}}%
    \put(1.80528226,-0.02045116){\color[rgb]{0,0,0}\makebox(0,0)[lb]{\smash{
}}}%
    \put(0.16706025,0.2610885){\color[rgb]{0,0,0}\makebox(0,0)[lb]{\smash{$Ev = P_0 \bigl[ h(x_1,x_2) + f(x_3,\ldots,x_d) \bigr] \Lcal^d_x$}}}%
    \put(0.42268935,0.58398841){\color[rgb]{0,0,0}\makebox(0,0)[lb]{\smash{$Ev = P_0 \bigl[ h(x_1) + p_2(x_1)x_2 + \cdots + p_d(x_1)x_d] \Lcal^d_x$}}}%
    \put(0.5841393,0.53465648){\color[rgb]{0,0,0}\makebox(0,0)[lb]{\smash{2nd blow-up}}}%
    \put(0.40699282,0.40908429){\color[rgb]{0,0,0}\makebox(0,0)[lb]{\smash{$v \; =$}}}%
    \put(0.23311273,0.57852902){\color[rgb]{0,0,0}\makebox(0,0)[lb]{\smash{$P_0 = \ee_1 \odot \ee_1$}}}%
    \put(0.43390115,0.09291146){\color[rgb]{0,0,0}\makebox(0,0)[lb]{\smash{$+$}}}%
    \put(0.72540802,0.09291146){\color[rgb]{0,0,0}\makebox(0,0)[lb]{\smash{$+$}}}%
    \put(0.45286407,0.02242361){\color[rgb]{0,0,0}\makebox(0,0)[lb]{\smash{{\footnotesize (one-directional)}}}}%
    \put(0.14790304,0.02242361){\color[rgb]{0,0,0}\makebox(0,0)[lb]{\smash{{\footnotesize (one-directional)}}}}%
    \put(0.08633526,0.62883562){\color[rgb]{0,0,0}\makebox(0,0)[lb]{\smash{$P_0 \neq a \odot b$}}}%
    \put(0.26123939,0.69610644){\color[rgb]{0,0,0}\makebox(0,0)[lb]{\smash{$Ev = P_0 f \Lcal^d$ }}}%
    \put(0.47202566,0.71225144){\color[rgb]{0,0,0}\makebox(0,0)[lb]{\smash{2nd blow-up}}}%
    \put(0.67383372,0.69610644){\color[rgb]{0,0,0}\makebox(0,0)[lb]{\smash{$v \; =$ }}}%
    \put(0.66588829,0.02242361){\color[rgb]{0,0,0}\makebox(0,0)[lb]{\smash{{\footnotesize (rigid deformation)}}}}%
    \put(0.11548595,0.09291146){\color[rgb]{0,0,0}\makebox(0,0)[lb]{\smash{$v \; =$}}}%
    \put(0.66588829,0.33859645){\color[rgb]{0,0,0}\makebox(0,0)[lb]{\smash{{\footnotesize (rigid deformation)}}}}%
    \put(0.76352815,0.64004742){\color[rgb]{0,0,0}\makebox(0,0)[lb]{\smash{{\footnotesize (affine)}}}}%
    \put(-0.0011168,0.33060167){\color[rgb]{0,0,0}\makebox(0,0)[lb]{\smash{$P_0 = \ee_1 \odot \ee_2$}}}%
    \put(0.44165227,0.33859645){\color[rgb]{0,0,0}\makebox(0,0)[lb]{\smash{{\footnotesize (one-directional)}}}}%
    \put(0.72540802,0.40908429){\color[rgb]{0,0,0}\makebox(0,0)[lb]{\smash{$+$}}}%
    \put(0.29039007,0.21624129){\color[rgb]{0,0,0}\makebox(0,0)[lb]{\smash{2nd blow-up}}}%
  \end{picture}%
\endgroup

\caption{Constructing good singular blow-ups.}
\label{fig:singular_blowups}
\end{figure}

\begin{theorem}[Good singular blow-ups] \label{thm:good_blowups}
Let $\nu \in \BDY(\Omega)$ be a BD-Young measure. For $\lambda_\nu^s$-almost every $x_0 \in \Omega$, there exists a singular tangent Young measure $\sigma \in \BDY_\loc(\R^d)$ as in Proposition~\ref{prop:localize_sing} such that additionally for any $v \in \BD_\loc(\R^d)$ with $Ev = [\sigma]$:
\begin{itemize}
  \item[(i)] If $\dprn{\id,\nu_{x_0}^\infty} \notin \setn{a \odot b}{a,b \in \R^d \setminus \{0\}}$ (this includes the case $\dprn{\id,\nu_{x_0}^\infty} = 0$), then $v$ is equal to an affine function almost everywhere.
  \item[(ii)] If $\dprn{\id,\nu_{x_0}^\infty} = a \odot b$ ($a,b \in \R^d \setminus \{0\}$) with $a \neq b$, then there exist functions $h_1,h_2 \in \BV_\loc(\R)$, $v_0 \in \R^d$, and a skew-symmetric matrix $R \in \R_\skw^{d \times d}$ such that
\[
  \qquad v(x) = v_0 + h_1(x \cdot a)b + h_2(x \cdot b)a + Rx,  \qquad \text{$x \in \R^d$ a.e.}
\]
  \item[(iii)] If $\dprn{\id,\nu_{x_0}^\infty} = a \odot a$ ($a \in \R^d \setminus \{0\}$), then there exists a function $h \in \BV_\loc(\R)$, $v_0 \in \R^d$ and a skew-symmetric matrix $R \in \R_\skw^{d \times d}$ such that
\[
  \qquad v(x) = v_0 + h(x \cdot a)a + Rx,  \qquad \text{$x \in \R^d$ a.e.}
\]
\end{itemize}
\end{theorem}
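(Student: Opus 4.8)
The plan is to combine the singular localization principle, Proposition~\ref{prop:localize_sing} (which, as noted there, also holds in $\BDY_\loc$), with a Fourier-analytic rigidity statement for the differential inclusion $Ev = P_0\mu$, together with an iteration of the blow-up where needed. First I would apply Proposition~\ref{prop:localize_sing} to obtain, at $\lambda_\nu^s$-almost every $x_0$, a first singular tangent Young measure $\sigma^{(1)}\in\BDY_\loc(\R^d)$ with $\sigma^{(1)}_y=\delta_0$ a.e., $\sigma^{(1),\infty}_y=\nu_{x_0}^\infty$ $\lambda_{\sigma^{(1)}}$-a.e., and $\lambda_{\sigma^{(1)}}\in\Tan(\lambda_\nu^s,x_0)\setminus\{0\}$; then any $v\in\BD_\loc(\R^d)$ with $Ev=[\sigma^{(1)}]$ satisfies $Ev = P_0\lambda_{\sigma^{(1)}}$, where $P_0:=\dpr{\id,\nu_{x_0}^\infty}\in\R_\sym^{d\times d}$ is a \emph{fixed} matrix. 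Any two such $v$ differ by an element of $\ker\Ecal$, i.e.\ by a rigid deformation (Lemma~\ref{lem:E_kernel} plus mollification in $\BD_\loc$); such a deformation is harmlessly absorbed into the terms $v_0+Rx$ of (ii)--(iii) and does not affect affineness in (i), so it suffices to analyse one $v$. The three alternatives of the theorem are exactly the three possibilities for $P_0$.

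The heart of the matter is the rigidity of solutions to $Ev=P_0\mu$ with $\mu\geq 0$ a locally finite measure. Passing to Fourier variables this reads $\xi\odot\hat v(\xi)=\ii P_0\hat\mu(\xi)$; since $w\mapsto\xi\odot w$ is injective for $\xi\neq 0$ (a rank-one skew matrix vanishes) with image $\{\xi\odot w:w\in\R^d\}$, projecting onto the orthogonal complement of that subspace forces $\hat v(\xi)$ and $\hat\mu(\xi)$ to vanish at every $\xi\neq 0$ with $P_0\notin\{\xi\odot w\}$. A short symmetric-tensor computation (the two-dimensional instances are Lemma~\ref{lem:sym_tensor_prod}) then identifies the characteristic set $\Sigma(P_0):=\{0\}\cup\setn{\xi\neq 0}{P_0\in\{\xi\odot w\}}$: it is $\{0\}$ when $P_0$ is not of the form $a\odot b$ (here one uses that $\xi\mapsto\dist(P_0,\{\xi\odot w\})$ is continuous and strictly positive, hence bounded below by compactness, on $\Sbb^{d-1}$), it is $\R a\cup\R b$ when $P_0=a\odot b$ with $a,b$ linearly independent, and it is $\R a$ when $P_0=a\odot a$. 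Hence $\supp\hat v,\supp\hat\mu\subseteq\Sigma(P_0)$. Since a tangent measure is only locally finite and not a priori tempered, this step is to be carried out after passing to a suitable blow-up sequence and localising with a cut-off, in the spirit of~\cite{Rind10?LSYM}.

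From here the plan is to read off the structure case by case, blowing up once more whenever a non-rigid remainder survives. In (i) with $P_0=0$ one has $Ev=[\sigma^{(1)}]=0$, so $v$ is a rigid deformation and $\sigma:=\sigma^{(1)}$ works; with $P_0\neq 0$, $\hat\mu$ is supported at the origin, so $\mu=\lambda_{\sigma^{(1)}}\ll\Lcal^d$, and choosing a Lebesgue point $y_0$ of its density with positive value and applying the \emph{regular} localization principle, Proposition~\ref{prop:localize_reg}, to $\sigma^{(1)}$ at $y_0$ produces a further tangent Young measure $\sigma$ with $\lambda_\sigma$ a positive constant multiple of $\Lcal^d$; by Lemma~\ref{lem:TanTan_eq_Tan} together with~\eqref{eq:Tan_abs}--\eqref{eq:Tan_density} one checks that $\sigma$ is again a singular tangent Young measure of $\nu$ at $x_0$ in the sense of Proposition~\ref{prop:localize_sing}, and now $Ev=[\sigma]=cP_0\Lcal^d$, so $v$ is affine by the remark after Lemma~\ref{lem:E_kernel}. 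In (ii), $\supp\hat v\subseteq\R a\cup\R b$; restricting $\xi\odot\hat v=\ii P_0\hat\mu$ to the line $\R a$ and using injectivity of $w\mapsto a\odot w$ twice forces the part of $\hat v$ on $\R a$ to point in direction $b$ and that piece of $v$ to depend only on $x\cdot a$, and symmetrically on $\R b$, while the part near the origin contributes an $\Lcal^d$-absolutely continuous piece with (at most) affine density; thus $\mu$ splits as a measure invariant under translations orthogonal to both $a$ and $b$ plus such an a.c.\ part. If the a.c.\ part is non-zero, a second blow-up at a generic point makes it dominant and yields an affine $v$, which — $\Ecal v$ being forced parallel to $a\odot b$ — is itself of the listed form with $h_1,h_2$ affine; if it is zero, $v=v_0+h_1(x\cdot a)b+h_2(x\cdot b)a+Rx$ directly, with $h_1,h_2\in\BV_\loc(\R)$ (their distributional derivatives being the one-dimensional slices of $\mu$). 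Case (iii) is the degenerate one: $\Sigma(P_0)=\R a$, the same single-direction analysis gives $v=v_0+h(x\cdot a)a+Rx$ with $h\in\BV_\loc(\R)$ up to a remainder which, being concentrated in Fourier near the origin on a line, need not be affine or even smooth but is killed by one further blow-up exactly as above.

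The step I expect to be the main obstacle is the rigidity statement of the second paragraph: making the Fourier/ellipticity argument rigorous for $v$ and $\mu$ that are merely locally finite, and — more seriously — extracting from the weak constraint $\supp\hat v\subseteq\Sigma(P_0)$ the precise one-directional structure in cases (ii) and (iii). This needs a structure-theorem analysis of tempered distributions supported on a union of lines, and in the rank-one case $P_0=a\odot a$ the algebra of $\odot$ degenerates, so one cannot argue as transparently as for $a\odot b$ with $a\neq b$ and must control an a priori badly behaved remainder, showing it vanishes after a further blow-up. A secondary, bookkeeping obstacle is to verify at each iteration that the blow-up produced is again a \emph{singular} tangent Young measure of the \emph{original} $\nu$ at $x_0$ — i.e.\ that $\lambda_\sigma\in\Tan(\lambda_\nu^s,x_0)\setminus\{0\}$ and $[\sigma]\in\Tan([\nu],x_0)$ are preserved; this is routine given Lemma~\ref{lem:TanTan_eq_Tan} and the tangent-measure density identities of Section~\ref{ssc:Tan}, but requires matching up the good points for $\lambda_\nu^s$ and for $[\nu]$.
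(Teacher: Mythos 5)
Your overall architecture is exactly the paper's: obtain a first singular tangent Young measure from Proposition~\ref{prop:localize_sing}, note that any underlying $v$ satisfies $Ev = P_0\abs{Ev}$ for the fixed matrix $P_0$, establish a rigidity statement for this differential inclusion, and whenever a non-rigid absolutely continuous remainder survives, blow up once more and use Lemma~\ref{lem:TanTan_eq_Tan} to check the result is still a singular tangent Young measure of $\nu$ at $x_0$. Your case~(i) is essentially the paper's proof: there too the Fourier argument is made rigorous by cutting off ($w=\phi v$), which destroys the exact support statement but yields, via a Mihlin multiplier estimate for the $0$-homogeneous symbol $[B(\xi)][A]^{-1}$, that $\Ecal v\in\Lrm^{d/(d-1)}_\loc$; absolute continuity plus a second (regular) blow-up at a Lebesgue point then gives affineness. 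So for (i) your plan is sound and matches the paper.

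The genuine gap is in cases~(ii) and~(iii), and it sits precisely where you locate your "main obstacle": the passage from $\supp\hat v\subseteq\Sigma(P_0)$ to the claimed structure. First, since $v$ and $\mu$ are only locally finite, the support statement itself is not available globally, and after localisation the commutator term $v\odot\nabla\phi$ ruins exact support containment, so the "structure theorem for tempered distributions on a union of lines" is not actually applicable to the objects at hand. Second, even granting the support statement, a distribution with $\hat v$ supported on $\R a\cup\R b$ involves transverse derivatives of arbitrary order of distributions on the lines, i.e.\ polynomial transverse dependence of arbitrary degree; your assertion that the remainder is "an a.c.\ piece with at most affine density" is exactly the content that must be proved and does not follow from the support constraint alone. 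The paper closes this gap not by Fourier analysis but by the elementary Saint--Venant-type identity $\partial_k\Wcal u^i_j=\partial_j\Ecal u^i_k-\partial_i\Ecal u^j_k$ from~\eqref{eq:Wu_identity}: applied to a mollification of $v$, it shows that for $i\geq 3$ the transverse partials $\partial_i g$ are simultaneously functions of $x_1$ alone and of $x_2$ alone, hence constant, which is what forces the remainder to be linear (Lemma~\ref{lem:dim_reduction_e1e2}); in case~(iii) the same identity yields $g(x)=h(x_1)+p_2(x_1)x_2+\cdots+p_d(x_1)x_d$, and the non-constant coefficients are killed by the second blow-up. Likewise, the $\BV_\loc$ regularity of the one-directional profiles $h_1,h_2$ is obtained in the paper from the $\BD$ slicing formula~\eqref{eq:Eu_slicing} (Lemma~\ref{lem:BD_rigidity_2D}), not from Fourier support. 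Without these (or equivalent) arguments, your cases~(ii) and~(iii) remain a programme rather than a proof; I would replace the distributional-support step by the $\Wcal u$ identity and the slicing lemma, which deliver exactly the missing structure.
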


\begin{remark}
In contrast to the situation for the space $\BV$, where \emph{all} blow-ups could be shown to have a good structure, in $\BD$ we may only ascertain that there \emph{exists} at least one good blow-up. Moreover, in $\BV$ we know from Alberti's Rank-One Theorem~\cite{Albe93ROPD} that the case corresponding to~(i), that is $\frac{\di Du}{\di \abs{Du}}(x_0)$ cannot be written as a tensor product, in fact occurs only on a negligible set. However, no such theorem is available for $\BD$, so we need all cases of the above theorem.
\end{remark}

\begin{example}
Let $\Omega = (-1,1)^2 \subset \R^2$ and let
\[
  u := \begin{pmatrix} \ONE_{\{x_2 > 0\}} \\ \ONE_{\{x_1 > 0\}} \end{pmatrix}.
\]
Then, $u \in \BDY(\Omega)$ and
\[
  Eu = (\ee_1 \odot \ee_2) \bigl[ \Hcal^1 \restrict \{ x_1 = 0 \} + \Hcal^1 \restrict \{ x_2 = 0 \} \bigr].
\]
Hence, for the elementary BD-Young measure $\epsilon_{Eu}$ at the origin, case~(ii) of the preceding theorem is applicable; notice that indeed we need both $h_1$ and $h_2$ for the result to be true.
\end{example}

With the notation of the theorem we set
\[
  P_0 := \begin{cases}
    \frac{\dprn{\id,\nu_{x_0}^\infty}}{\absn{\dprn{\id,\nu_{x_0}^\infty}}} 
      & \text{if $\dprn{\id,\nu_{x_0}^\infty} \neq 0$,} \\
    0 & \text{if $\dprn{\id,\nu_{x_0}^\infty} = 0.$}
  \end{cases}
\]
The proof will be accomplished in the following three sections, its main scheme is shown in Figure~\ref{fig:singular_blowups}.

\subsection{The case $P_0 \neq a \odot b$}  \label{ssc:neq_a_odot_b}

The proof technique for this case consists of using Fourier multipliers and projections together with an iterated blow-up argument and is an adaptation of the idea for the proof of Lemma~2.7 in~\cite{Mull99VMMP}.

\begin{proof}[Proof of Theorem~\ref{thm:good_blowups}~(i)]
Take a singular tangent Young measure $\nu \in \BDY_\loc(\R^d)$ at a point $x_0 \in \Omega$ as in Proposition~\ref{prop:localize_sing} and let $v \in \BD_\loc(\R^d)$ with $Ev = [\sigma]$. This $v$ then satisfies (by the properties of singular tangent Young measures, see e.g.~\eqref{eq:loc_sing_ddpr})
\[
  Ev = P_0 \abs{Ev}.
\]

If $P_0 = 0$ (i.e.\ $\dprn{\id,\nu_{x_0}^\infty} = 0$), then we immediately have that $v$ is affine. Hence from now on we assume $P_0 \neq 0$.

\proofstep{Step~1.} Suppose first that $v$ is smooth. By assumption, $P_0 \neq a \odot b$ for any $a,b \in \R^d$. Let $A \colon \R^{d \times d} \to \R^{d \times d}$ be the orthogonal projection onto $(\spn\{P_0\})^\perp$. Then,
\begin{equation} \label{eq_AEv0}
  A(\Ecal v) \equiv 0.
\end{equation}
For every smooth cut-off function $\phi \in \Crm_c^\infty(\R^d;[0,1])$, the function $w := \phi v$ satisfies (here exceptionally considering $\nabla \phi$ as a column vector)
\[
  \Ecal w = \phi \Ecal v + v \odot \nabla \phi.
\]
Combining this with~\eqref{eq_AEv0}, we get
\begin{equation} \label{eq:Aew_f}
  A(\Ecal w) = A(v \odot \nabla \phi) =: f,
\end{equation}
where by means of an embedding result in $\BD$~\cite{TemStr80FBD}, $f \in \Lrm^{d/(d-1)}(\R^d;\R^{d \times d})$ ($\Lrm^\infty$ if $d=1$).

If for the Fourier transform $\hat{g}$ of a function $g \in \Lrm^1(\R^d;\R^N)$ we use the definition
\[
  \hat{g}(\xi) := \int g(x) \ee^{-2\pi\ii x \cdot \xi} \dd x,  \qquad \xi \in \R^d,
\]
then it can be checked easily that
\[
  \widehat{\Ecal w}(\xi) = (2\pi\ii) \, \hat{w}(\xi) \odot \xi.
\]
Hence, applying the Fourier transform to both sides of~\eqref{eq:Aew_f}, and considering $A$ to be identified with its complexification (that is, $A(M+\ii N) = AM + \ii AN$ for $M,N \in \R^{d \times d}$), we arrive at
\begin{equation} \label{eq:AeFw_Ff}
  (2\pi\ii) \, A(\hat{w}(\xi) \odot \xi) = \hat{f}(\xi)  \qquad \text{for all $\xi \in \R^d$.}
\end{equation}

\proofstep{Step~2.}
We will now use some linear algebra to rewrite~\eqref{eq:AeFw_Ff} as a Fourier multiplier equation and then apply a version of the Mihlin multiplier theorem.

Notice first that (the complexification of) the projection $A \colon \C^{d \times d} \to \C^{d \times d}$ has kernel $\spn\{P_0\}$ (here and in the following all spans are understood in $\C^{d \times d}$) and hence descends to the quotient
\[
  [A] \colon \C^{d \times d} / \spn\{P_0\} \to \ran A,
\]
and $[A]$ is an invertible linear map. Then, for $\xi \in \R^d \setminus \{0\}$, let
\[
  \bigl\{ P_0, \ee_1 \odot \xi, \ldots, \ee_d \odot \xi, G_{d+1}(\xi),
  \ldots, G_{d^2-1}(\xi) \bigr\} \quad\subset \R^{d \times d}
\]
be a basis of $\C^{d \times d}$ with the property that the matrices $G_{d+1}(\xi), \ldots, G_{d^2-1}(\xi)$ depend smoothly on $\xi$ and are positively $1$-homogeneous in $\xi$. For all $\xi \in \R^d \setminus \{0\}$, denote by $B(\xi) \colon \C^{d \times d} \to \C^{d \times d}$ the (non-orthogonal) projection with
\begin{align*}
  \ker B(\xi) &= \spn \{P_0\}, \\
  \ran B(\xi) &= \spn \bigl\{ \ee_1 \odot \xi, \ldots, \ee_d \odot \xi, G_{d+1}(\xi),
  \ldots, G_{d^2-1}(\xi) \bigr\}.
\end{align*}
If we interpret $\ee_1 \odot \xi, \ldots, \ee_d \odot \xi, G_{d+1}(\xi), \ldots, G_{d^2-1}(\xi)$ as vectors in $\R^{d^2}$, collect them into the columns of the matrix $X(\xi) \in \R^{d^2 \times (d^2-1)}$, and also let $Y \in \R^{d^2 \times (d^2-1)}$ be a matrix whose columns comprise an orthonormal basis of $(\spn\{P_0\})^\perp$, then $B(\xi)$ can be written explicitly as (it is elementary to see that $Y^T X(\xi)$ is invertible)
\[
  B(\xi) = X(\xi)(Y^T X(\xi))^{-1}Y^T.
\]
This implies that $B(\xi)$ is positively $0$-homogeneous, and using Cramer's Rule, we also see that $B(\xi)$ depends smoothly on $\xi \in \R^d \setminus \{0\}$ (if $\det (Y^T X(\xi))$ was not bounded away from zero for $\xi \in \Sbb^{d-1}$, then by compactness there would exist $\xi_0 \in \Sbb^{d-1}$ with $\det (Y^T X(\xi_0)) = 0$, a contradiction). Of course, also $B(\xi)$ descends to a quotient
\[
  [B(\xi)] \colon \C^{d \times d} / \spn\{P_0\} \to \ran B(\xi),
\]
which is now invertible. It is not difficult to see that $\xi \mapsto [B(\xi)]$ is still positively $0$-homogeneous and smooth in $\xi \neq 0$ (for example by utilizing the basis given above).

Since $\hat{w}(\xi) \odot \xi \in \ran B(\xi)$, we notice that $[B(\xi)]^{-1}(\hat{w}(\xi) \odot \xi) = [\hat{w}(\xi) \odot \xi]$, the equivalence class of $\hat{w}(\xi) \odot \xi$ in $\C^{d \times d} / \spn\{P_0\}$. This allows us to rewrite~\eqref{eq:AeFw_Ff} in the form
\[
  (2\pi\ii) \, [A] [B(\xi)]^{-1}(\hat{w}(\xi) \odot \xi) = \hat{f}(\xi),
\]
or equivalently as
\[
  (2\pi\ii) \, \hat{w}(\xi) \odot \xi = [B(\xi)] [A]^{-1} \hat{f}(\xi).
\]
The function $M \colon \R^d \setminus \{0\} \to \R^{d^2 \times d^2}$ given by $\xi \mapsto [B(\xi)] [A]^{-1}$ is smooth and positively $0$-homogeneous, and we have the multiplier equation
\[
  \widehat{\Ecal w}(\xi) = (2\pi\ii) \, \hat{w}(\xi) \odot \xi = M(\xi) \hat{f}(\xi).
\]

A matrix-version of the Mihlin Multiplier Theorem, see Theorem~6.1.6 in~\cite{BerLof76IS}, now yields
\begin{equation} \label{eq:Ew_L1star_estimate}
  \norm{\Ecal w}_{\Lrm^{d/(d-1)}} \leq C \norm{f}_{\Lrm^{d/(d-1)}}
  \leq C \norm{v}_{\Lrm^{d/(d-1)}(K;\R^d)},
\end{equation}
where $K := \supp \phi$ and $C = C(K,\norm{A},\norm{\nabla \phi}_\infty)$ is a constant.

\proofstep{Step~3.}
If $v \in \BD_\loc(\R^d)$ is not smooth, we take a family of mollifiers $(\rho_\delta)_{\delta > 0}$ and define by convolution $v_\delta := \rho_\delta \conv v \in \Crm^\infty(\R^d;\R^d)$. Correspondingly, with a fixed cut-off function $\phi$ as above we define $w_\delta := \phi v_\delta$. This mollification preserves the property $Ev = P_0 \abs{Ev}$ and so~\eqref{eq:Ew_L1star_estimate} gives for $\delta < 1$,
\[
  \norm{\Ecal w_\delta}_{\Lrm^{d/(d-1)}} \leq C \norm{v_\delta}_{\Lrm^{d/(d-1)}(K;\R^d)}
  \leq C \norm{v}_{\Lrm^{d/(d-1)}(K_1;\R^d)},
\]
where again $K := \supp \phi$ and $K_1 := K + \Bbb^d$.

Since $\Ecal w_\delta \Lcal^d \toweakstar Ew$ as $\delta \todown 0$, the previous $\delta$-uniform estimate implies that $Ew$ is absolutely continuous with respect to Lebesgue measure, $Ew = \Ecal w \Lcal^d$ for $\Ecal w \in \Lrm^{d/(d-1)}(K;\R_\sym^{d \times d})$. Finally, varying $\phi$, we get that also $Ev$ is absolutely continuous with respect to Lebesgue measure and $\Ecal v \in \Lrm_\loc^{d/(d-1)}(\R^d;\R_\sym^{d \times d})$.

\proofstep{Step~4.}
We have shown so far that $[\sigma] = Ev$ is absolutely continuous with respect to Lebesgue measure. Now apply Proposition~\ref{prop:localize_reg} and Preiss' existence result for non-zero tangent measures to $\sigma$ in order to infer the existence of a \emph{regular} tangent Young measure $\kappa$ to $\sigma$ at $\Lcal^d$-almost every point $y_0 \in \supp [\sigma]$ with $[\kappa] \neq 0$. It is not difficult to see that $\kappa$ is still a singular tangent measure to $\nu$ in the sense of Proposition~\ref{prop:localize_sing}. Indeed, one may observe first that~\eqref{eq:loc_sing_1},~\eqref{eq:loc_sing_2} with $\kappa$ in place of $\sigma$ still hold by the conclusion of Propositon~\ref{prop:localize_reg} and~\eqref{eq:loc_sing_1},~\eqref{eq:loc_sing_2} for $\sigma$ together with the fact that tangent measures to tangent measures are tangent measures, see Lemma~\ref{lem:TanTan_eq_Tan} (we need to select $x_0 \in \Omega$ according to that lemma, which is still possible $\lambda_\nu^s$-almost everywhere). Finally, we see that~\eqref{eq:loc_sing_ddpr} also holds with $\kappa$ in place of $\sigma$, because this assertion always follows from~\eqref{eq:loc_sing_1},~\eqref{eq:loc_sing_2}.

On the other hand, by the absolute continuity of $Ev$ with respect to $\Lcal^d$ and standard results on tangent measures, we may in fact choose $y_0$ such that $[\kappa] \in \Tan(Ev,y_0)$ is a constant multiple of Lebesgue measure, see Section~\ref{ssc:Tan}. Thus, any $\tilde{v} \in \BD_\loc(\R^d)$ with $E\tilde{v} = [\kappa]$ is affine. This shows the claim of Theorem~\ref{thm:good_blowups}~(i) with $\kappa$ in place of $\sigma$ and $\tilde{v}$ in place of $v$.
\end{proof}

\subsection{The case $P_0 = a \odot b$}  \label{ssc:a_odot_b}

This case is more involved, yet essentially elementary. We first examine the situation in two dimensions and then, via a dimension reduction lemma, extend the result to an arbitrary number of dimensions.

\begin{lemma}[2D rigidity] \label{lem:BD_rigidity_2D}
A function $u \in \BD_\loc(\R^2)$ satisfies
\begin{equation} \label{eq:Eu_P_2D}
  Eu = \frac{a \odot b}{\abs{a \odot b}} \abs{Eu}
  \qquad\text{for fixed $a,b \in \R^2 \setminus \{0\}$ with $a \neq b$,}
\end{equation}
if and only if $u$ has the form
\begin{equation} \label{eq:P_symtens_u}
  u(x) = h_1(x \cdot a)b + h_2(x \cdot b)a,  \qquad\text{$x \in \R^2$ a.e.,}
\end{equation}
where $h_1, h_2 \in \BV_\loc(\R)$.
\end{lemma}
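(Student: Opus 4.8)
\emph{Setup.} I would begin by observing that we may assume $a$ and $b$ to be linearly independent, equivalently that $a \odot b$ has rank two: if $a \parallel b$ then $a \odot b$ is a nonzero rank-one symmetric matrix, hence of the form $\pm c \otimes c = \pm c \odot c$ by Lemma~\ref{lem:sym_tensor_prod}~(i), and the statement reduces to the (separately treated) case $P_0 = c \odot c$. Under this assumption the vectors $a^\perp := Ja$ and $b^\perp := Jb$, where $J$ denotes the $90^\circ$ rotation, again form a basis of $\R^2$; this is essentially the only linear algebra needed. Throughout, the converse implication will only use the weaker hypothesis that $Eu = g \,(a \odot b)$ for some scalar measure $g \in \Mbf_\loc(\R^2)$.

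\emph{From \eqref{eq:P_symtens_u} to \eqref{eq:Eu_P_2D}.} If $u(x) = h_1(x \cdot a)b + h_2(x \cdot b)a$ with $h_1, h_2 \in \BV_\loc(\R)$, then $h_1(x \cdot a)$ and $h_2(x \cdot b)$ lie in $\Lrm^\infty_\loc(\R^2) \subset \Lrm^1_\loc(\R^2)$, since one-dimensional $\BV$-functions are locally bounded, so $u \in \Lrm^1_\loc(\R^2;\R^2)$. Computing distributional derivatives,
\[
  Du = h_1'(x \cdot a)\,(b \otimes a) + h_2'(x \cdot b)\,(a \otimes b),
  \qquad\text{hence}\qquad
  Eu = \bigl[ h_1'(x \cdot a) + h_2'(x \cdot b) \bigr]\,(a \odot b),
\]
where $h_1'(x \cdot a)$ is the locally finite signed measure obtained by pulling back $h_1' \in \Mbf_\loc(\R)$ along $x \mapsto x \cdot a$, and similarly for $h_2'$. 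In particular $Eu \in \Mbf_\loc(\R^2; \R_\sym^{2 \times 2})$, so $u \in \BD_\loc(\R^2)$, and $Eu$ is a scalar-measure multiple of $a \odot b$, which is \eqref{eq:Eu_P_2D}.

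\emph{From \eqref{eq:Eu_P_2D} to \eqref{eq:P_symtens_u}.} For $p, q \in \R^2$ the scalar distribution $p^T (Eu)\, q$ equals $\tfrac12\bigl[\partial_q(p \cdot u) + \partial_p(q \cdot u)\bigr]$, and under our hypothesis it equals $g \cdot p^T(a \odot b)\,q = \tfrac12\, g\,[(p \cdot a)(q \cdot b) + (p \cdot b)(q \cdot a)]$. Choosing $p = q = a^\perp$ and using $a^\perp \cdot a = 0$ gives $\partial_{a^\perp}(a^\perp \cdot u) = 0$ in $\Dcal'$, so by the elementary fact that an $\Lrm^1_\loc$-function annihilated by one directional derivative depends only on the transverse variable, $a^\perp \cdot u$ agrees a.e.\ with a function of $x \cdot a$ alone; symmetrically, $b^\perp \cdot u$ is a function of $x \cdot b$ alone. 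Writing $u$ in the basis dual to $\{a^\perp, b^\perp\}$, whose members are parallel to $b$ and to $a$ respectively (since they are orthogonal to $b^\perp$, resp.\ $a^\perp$), and absorbing scalar factors, yields $u(x) = h_1(x \cdot a)\,b + h_2(x \cdot b)\,a$ with $h_1, h_2 \in \Lrm^1_\loc(\R)$.

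\emph{Regularity of $h_1, h_2$ --- the main obstacle.} The remaining point, and the only one not entirely routine, is to improve $h_1, h_2$ to $\BV_\loc(\R)$: one cannot simply differentiate $u$ componentwise, because for a $\BD_\loc$-function the full gradient $Du$ need not be a measure. The remedy is to feed the structure just obtained back into $Eu = g\,(a \odot b) \in \Mbf_\loc$. After the linear change of variables $y = (x \cdot a, x \cdot b)$ (admissible since $a, b$ are independent), this says that the distribution $h_1'(y_1) \otimes 1 - 1 \otimes h_2'(y_2)$ is a local measure on $\R^2$; testing against $\rho(y_1)\sigma(y_2)$ with $\sigma$ of unit integral and $\rho$ of \emph{zero} integral bounds $\langle h_1', \rho\rangle$ by a local constant times $\norm{\rho}_\infty$, and a standard argument (reinstating constants) then forces $h_1' \in \Mbf_\loc(\R)$, i.e.\ $h_1 \in \BV_\loc(\R)$; likewise $h_2$. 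Alternatively, the one-dimensional slicing theory of $\BD$ can be invoked. This regularity step, together with the bookkeeping of the degenerate rank-one configuration, is where I expect the real content to lie; everything else is the identity $p^T(Eu)q = \tfrac12[\partial_q(p\cdot u) + \partial_p(q\cdot u)]$ combined with the one-direction lemma.
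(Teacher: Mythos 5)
Your proof is correct and follows essentially the same route as the paper's: the identity $p^T(Eu)q = \tfrac12[\partial_q(p\cdot u)+\partial_p(q\cdot u)]$ with $p=q=a^\perp$ (resp.\ $p=q=b^\perp$) is exactly the $\BD$-slicing statement the paper invokes to deduce $\partial_1 u^1 = \partial_2 u^2 = 0$ after normalizing $a=\ee_1$, $b=\ee_2$, and your tensor-product test upgrading $h_1,h_2$ to $\BV_\loc$ matches the paper's pairing of $Eu$ with $\phi\otimes\eta$. The only blemishes are cosmetic: the sign in \enquote{$h_1'(y_1)\otimes 1 - 1\otimes h_2'(y_2)$} should be a plus (harmless, since your zero-mean choice of $\rho$ kills the second term either way), and your explicit reduction of the degenerate parallel case is a welcome precision that the paper leaves implicit.
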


Notice that we are only imposing a condition on the symmetric derivative, which only determines a function up to a rigid deformation. In the above case, however, since $a$ and $b$ are linearly independent, we may absorb this rigid deformation into $h_1$ and $h_2$.

\begin{proof}
By the chain rule in $\BV$, it is easy to see that all $u$ of the form~\eqref{eq:P_symtens_u} satisfy~\eqref{eq:Eu_P_2D}.

For the other direction, without loss of generality we suppose that $a = \ee_1$, $b = \ee_2$ (see Step~2 in the proof of Theorem~\ref{thm:good_blowups}~(ii) below for an explicit reduction; in fact, this lemma will only be used in the case $a = \ee_1$, $b = \ee_2$ anyway).

We will use a slicing result, Proposition~3.2 in~\cite{AmCoDa97FPFB}, which essentially follows from Fubini's Theorem: If for $\xi \in \R^2 \setminus \{0\}$ we define
\begin{align*}
  H_\xi &:= \setb{ x \in \R^2 }{ x \cdot \xi = 0 }, \\
  u_y^\xi(t) &:= \xi^T u(y + t\xi),  \qquad\text{where $t \in \R$, $y \in H_\xi$,}
\end{align*}
then the result in \emph{loc.~cit.\ }states
\begin{equation} \label{eq:Eu_slicing}
  \absb{\xi^T Eu \xi} = \int_{H_\xi} \absb{Du_y^\xi} \dd \Hcal^1(y)
  \qquad\text{as measures.}
\end{equation}
By assumption, $Eu = \sqrt{2} (\ee_1 \odot \ee_2) \abs{Eu}$ with $\abs{Eu} \in \Mbf_\loc(\R^2)$, so if we apply~\eqref{eq:Eu_slicing} for $\xi = \ee_1$, we get
\[
  0 = \frac{\sqrt{2}}{2} \absb{\ee_1^T \bigl(\ee_1 \ee_2^T
    + \ee_2 \ee_1^T \bigr) \ee_1} \, \abs{Eu}
  = \int_{H_\xi} \absb{\partial_t u^1(y + t\ee_1)} \dd \Hcal^1(y),
\]
where we wrote $u = (u^1,u^2)^T$. This yields $\partial_1 u^1 \equiv 0$ distributionally, whence $u^1(x) = h_2(x_2)$ for some $h_2 \in \Lrm_\loc^1(\R)$. Analogously, we find that $u^2(x) = h_1(x_1)$ with $h_1 \in  \Lrm_\loc^1(\R)$. Thus, we may decompose
\[
  u(x) = \begin{pmatrix} 0 \\ h_1(x_1) \end{pmatrix}
    + \begin{pmatrix} h_2(x_2) \\ 0 \end{pmatrix}
  = h_1(x \cdot \ee_1)\ee_2 + h_2(x \cdot \ee_2)\ee_1
\]
and it only remains to show that $h_1, h_2 \in \BV_\loc(\R)$. For this, fix $\eta \in \Crm_c^1(\R;[-1,1])$ with $\int \eta \dd t = 1$ and calculate for all $\phi \in \Crm_c^1(\R;[-1,1])$ by Fubini's Theorem,
\begin{align*}
  2 \int \phi \otimes \eta \dd(Eu)_2^1 &= - \int u^2 (\phi' \otimes \eta) \dd x - \int u^1 (\phi \otimes \eta') \dd x \\
  &= - \int h_1 \phi' \dd x_1 \cdot \int \eta \dd x_2 - \int u^1 (\phi \otimes \eta') \dd x.
\end{align*}
So, with $K := \supp \phi \times \supp \eta$,
\[
  \absBB{ \int h_1 \phi' \dd x } \leq 2 \abs{Eu}(K) + \norm{u^1}_{\Lrm^1(K)} \cdot \norm{\eta'}_\infty < \infty
\]
for all $\phi \in \Crm_c^1(\R)$ with $\norm{\phi}_\infty \leq 1$, hence $h_1 \in \BV_\loc(\R)$. Likewise, $h_2 \in \BV_\loc(\R)$, and we have shown the lemma.
\end{proof}

Next we need to extend the preceding rigidity lemma to an arbitrary number of dimensions. This is the purpose of the following lemma, which we only formulate for the case $P = \ee_1 \odot \ee_2$ to avoid notational clutter (we will only need this special case later).

\begin{lemma}[Dimension reduction] \label{lem:dim_reduction_e1e2}
Let $u \in \BD_\loc(\R^d)$ be such that
\[
  Eu = \sqrt{2} (\ee_1 \odot \ee_2) \abs{Eu}.
\]
Then, there exist a Radon measure $\mu \in \Mbf_\loc(\R^2)$ and a linear function $f \colon \R^{d-2} \to \R$ such that
\[
  Eu = \sqrt{2} (\ee_1 \odot \ee_2) \Bigl[ \mu \otimes \Lcal^{d-2} + f(x_3,\ldots,x_d) \Lcal^d_x \Bigr].
\]
\end{lemma}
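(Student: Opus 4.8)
The plan is to extract the structure of $u$ directly from the componentwise content of the hypothesis, with no blow-up needed. Put $\lambda:=\abs{Eu}\in\Mbf_\loc^+(\R^d)$. Since $(\ee_1\odot\ee_2)_j^i$ vanishes unless $\{i,j\}=\{1,2\}$, where it equals $\tfrac12$, the assumption $Eu=\sqrt2(\ee_1\odot\ee_2)\lambda$ says that $(Eu)_j^i=0$ whenever $\{i,j\}\neq\{1,2\}$, while $\rho:=(Eu)_2^1=(Eu)_1^2=\tfrac1{\sqrt2}\lambda\geq 0$. In particular the diagonal entries give $\partial_i u^i=(Eu)_i^i=0$ distributionally, so $u^1$ is independent of $x_1$ and $u^2$ is independent of $x_2$; only the first two components of $u$ will play a role. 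If $d=2$ the statement is the hypothesis itself, with $\mu=\lambda$ and $f\equiv 0$, so assume $d\geq 3$. All identities below are understood in the sense of distributions on $\R^d$.

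First I would show that $u^1$ and $u^2$ are affine in $x''=(x_3,\dots,x_d)$. Let $\Wcal u:=\tfrac12(Du-Du^T)$ be the distributional skew part of the gradient, so $Du=Eu+\Wcal u$. Exactly as in~\eqref{eq:Wu_identity} --- which is merely a rearrangement of mixed second derivatives and therefore valid for any $u\in\Lrm_\loc^1(\R^d;\R^d)$ --- one gets $\partial_k(\Wcal u)_j^i=\partial_j(Eu)_k^i-\partial_i(Eu)_k^j$. For $k\geq 3$ both terms on the right vanish (neither $\{i,k\}$ nor $\{j,k\}$ can equal $\{1,2\}$), hence $\partial_k(\Wcal u)_j^i=0$: the distribution $\Wcal u$ does not depend on $x''$. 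Consequently, for $j\geq 3$, $\partial_j u^1=(Eu)_j^1+(\Wcal u)_j^1=(\Wcal u)_j^1$ is independent of $x''$, and (as $u^1$ is) of $x_1$, so $\partial_j u^1=:c_j^1$ is a distribution in $x_2$ alone; subtracting $\sum_{j\geq 3}c_j^1(x_2)x_j$ from $u^1$ annihilates the partials in the directions $\ee_1,\ee_3,\dots,\ee_d$, leaving $u^1(x)=a^1(x_2)+\sum_{j\geq 3}c_j^1(x_2)x_j$. Symmetrically $u^2(x)=a^2(x_1)+\sum_{j\geq 3}c_j^2(x_1)x_j$.

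The second step is to substitute these into $2\rho=\partial_2 u^1+\partial_1 u^2$, which gives $\rho=P+\sum_{j\geq 3}Q_j\,x_j$ with $P:=\tfrac12\big((a^1)'(x_2)+(a^2)'(x_1)\big)$ and $Q_j:=\tfrac12\big((c_j^1)'(x_2)+(c_j^2)'(x_1)\big)$, distributions in $(x_1,x_2)$ read on $\R^d$ as constant in $x''$. Here positivity of $\rho$ is exploited: for fixed $\alpha\in\Crm_c^\infty(\R^2)$ with $\alpha\geq 0$ the functional $\psi\mapsto\dprn{\rho,\alpha\otimes\psi}$ on $\Crm_c^\infty(\R^{d-2})$ is a positive distribution, hence a positive Radon measure, and by the formula for $\rho$ it has density $\dprn{P,\alpha}+\sum_j\dprn{Q_j,\alpha}x_j$ with respect to $\Lcal^{d-2}$. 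An affine function that is nonnegative on all of $\R^{d-2}$ is constant, so $\dprn{Q_j,\alpha}=0$ for every such $\alpha$; writing a general test function as a difference of two nonnegative ones yields $Q_j=0$ for all $j\geq 3$. Thus $\rho=P$ is independent of $x''$, i.e.\ $\rho=\mu_0\otimes\Lcal^{d-2}$ for a distribution $\mu_0$ on $\R^2$, and pairing with $\alpha\otimes\psi_0$ where $\psi_0\geq 0$, $\int_{\R^{d-2}}\psi_0\dd x''=1$, shows $\mu_0\in\Mbf_\loc^+(\R^2)$. Taking $\mu:=\sqrt2\,\mu_0$ and $f\equiv 0$ one obtains $\abs{Eu}=\sqrt2\,\rho=\mu\otimes\Lcal^{d-2}$, hence $Eu=\sqrt2(\ee_1\odot\ee_2)\big[\mu\otimes\Lcal^{d-2}+f(x_3,\dots,x_d)\Lcal_x^d\big]$; in fact this argument shows the linear term is never needed and $\mu$ may be taken positive, since $\rho=(Eu)_2^1$ is a nonnegative measure under the hypothesis.

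The only place calling for genuine care is the second step: passing from ``each $\partial_{x_j}u^i$, $j\geq 3$, is $x''$-independent'' to the explicit form $u^i=a^i+\sum_j c_j^i x_j$ (which uses that one may multiply a distribution by the polynomial $x_j$, and that a distribution killed by $\partial_{x_1}$ and all $\partial_{x_j}$, $j\geq 3$, is a distribution in $x_2$ only), together with the extraction of $Q_j=0$ from positivity. Everything else is immediate from the hypothesis, and I do not anticipate a serious obstacle: the lemma is essentially the statement that a constraint on $Eu$ which is ``thin'' only in the $\ee_1\odot\ee_2$ slot forces $Du$ to depend trivially on the remaining $d-2$ directions.
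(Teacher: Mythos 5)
Your proof is correct, and it reaches the stated conclusion by a genuinely different route from the paper's. The paper first proves the structure for smooth $u$ (using the same commutator identity~\eqref{eq:Wu_identity} that you invoke, but organized so as to show that $\partial_i g$ is simultaneously a function of $x_1$ alone and of $x_2$ alone for $i\geq 3$, hence constant), and then mollifies and passes to the weak* limit, using that the space of linear functions of $(x_3,\dots,x_d)$ is finite-dimensional and therefore weakly* closed; the linear term $f$ survives this limit passage. You instead work distributionally throughout, which removes the regularization step, and --- more substantially --- you exploit the positivity of $(Eu)^1_2=\tfrac{1}{\sqrt2}\abs{Eu}$, which the paper's argument does not use to constrain the linear term. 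This is legitimate (the hypothesis really does pin the polar to the fixed matrix $\sqrt2\,\ee_1\odot\ee_2$ against the positive measure $\abs{Eu}$, and an affine function of $x''$ that is nonnegative on all of $\R^{d-2}$ must be constant), and it buys a strictly stronger conclusion: $f\equiv 0$ and $\mu\geq 0$. In particular, your version would make the sub-case \enquote{$f$ is non-zero} in Step~1 of the proof of Theorem~\ref{thm:good_blowups}~(ii) vacuous, slightly shortening the downstream argument. What the paper's positivity-free route buys in exchange is that it establishes the same structural decomposition under the signed constraint $Eu=(\ee_1\odot\ee_2)\nu$ with $\nu$ a signed measure, where the linear term genuinely can occur (e.g.\ $u=\tfrac12(x_2x_3,\,x_1x_3,\,-x_1x_2)$ in $d=3$ gives $\Ecal u=(\ee_1\odot\ee_2)\,x_3$); for the lemma as stated, however, your argument is complete. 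The distributional facts you flag as needing care --- that a distribution annihilated by $\partial_1,\partial_3,\dots,\partial_d$ is a distribution in $x_2$ alone, that multiplication by the polynomial $x_j$ is admissible, and that a positive distribution is a positive Radon measure --- are all standard and pose no obstacle.
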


\begin{proof}
In all of the following, let
\[
  P_0 := \sqrt{2} (\ee_1 \odot \ee_2).
\]

\proofstep{Step~1.}
We first assume that $u$ is smooth. In this case, there exists $g \in \Crm^\infty(\R^d)$ such that
\[
  \Ecal u = P_0 g  \qquad\text{and}\qquad  E^s u = 0.
\]
Clearly, 
\[
  \Ecal u(x)_k^j = \frac{\sqrt{2}}{2} \begin{cases}
    g(x)  & \text{if $(j,k) = (1,2)$ or $(j,k) = (2,1)$,} \\
    0     & \text{otherwise.}
  \end{cases}
\]

Fix $i \geq 3$. With
\[
  \Wcal u := \frac{1}{2} \bigl( \nabla u - \nabla u^T \bigr)
\]
we have from~\eqref{eq:Wu_identity},
\[
  \partial_k \Wcal u_j^i = \partial_j \Ecal u_k^i - \partial_i \Ecal u_k^j,
  \qquad\text{for $j,k = 1,\ldots,d$.}
\]
Since $i \geq 3$, only the second term is possibly non-zero, and so
\[
  \nabla \Wcal u(x)_j^i = - \partial_i \Ecal u(x)^j = -\frac{\sqrt{2}}{2} \begin{cases}
    (0, \partial_i g(x),0,\ldots,0)  & \text{if $j = 1$,} \\
    (\partial_i g(x),0,0,\ldots,0)  & \text{if $j = 2$,} \\
    (0,\ldots,0)  & \text{if $j \geq 3$.}
  \end{cases}
\]
It is elementary to see that if a function $h \in \Crm^\infty(\R^d)$ satisfies $\partial_k h \equiv 0$ for all $k = 2,\ldots,d$, then, with a slight abuse of notation, $h(x) = h(x_1)$ and also $\partial_1 h(x) = \partial_1 h(x_1)$. In our situation this gives that $\partial_i g$ can be written both as a function of $x_1$ only, and as a function of $x_2$ only. But this is only possible if $\partial_i g$ is constant, say $\partial_i g \equiv a_i \in \R$ for $i = 3,\ldots,d$.

If we set
\[
  f(x) := a_3 x_3 + \cdots + a_d x_d,
\]
we have that the function $h(x) := g(x) - f(x)$ only depends on the first two components $x_1,x_2$ of $x$, and thus
\[
  Eu = P_0 \Bigl[ h(x_1,x_2) \Lcal^d_x + f(x_3,\ldots,x_d) \Lcal^d_x \Bigr].
\]

\proofstep{Step~2.}
Now assume that only $u \in \BD_\loc(\R^d)$. We will reduce this case to the previous one by a smoothing argument. Set $u_\delta := \rho_\delta \conv u \in \Crm^\infty(\R^d;\R^d)$, where $(\rho_\delta)_{\delta > 0}$ is a family of mollifying kernels. It can be seen that $Eu_\delta = \sqrt{2} (\ee_1 \odot \ee_2) \abs{Eu_\delta}$ still holds, so we may apply the first step to get a smooth function $h_\delta \in \Crm^\infty(\R^2)$ and a linear function $f_\delta \colon \R^{d-2} \to \R$ such that
\[
  Eu_\delta = P_0 \Bigl[ h_\delta(x_1,x_2) \Lcal^d_x
  + f_\delta(x_3,\ldots,x_d) \Lcal^d_x \Bigr].
\]

We will show that also the limit has an analogous form: With the cube $Q^k(R) := (-R,R)^k$ ($R > 0$), take $\phi \in \Crm_c(Q^2(R);[-1,1])$, and define the measures
\[
  \mu_\delta := h_\delta(x_1,x_2) \Lcal^2_{(x_1,x_2)}  \quad\in \Mbf_\loc(\R^2).
\]
We have from Fubini's Theorem,
\[
  \int \phi \otimes \ONE_{Q^{d-2}(R)} \dd Eu_\delta = P_0 \Biggl[ (2R)^{d-2} \int \phi \dd \mu_\delta
  + \int \phi \dd x \cdot \int_{Q^{d-2}(R)} f_\delta \dd x \Biggr]
\]
The second term on the right hand side is identically zero since $f_\delta$ is linear and $Q^{d-2}(R)$ is symmetric, so, with a constant $C = C(R)$,
\[
  \limsup_{\delta \todown 0} \int \phi \dd \mu_\delta \leq C \limsup_{\delta \todown 0} \abs{Eu_\delta}(Q^d(R))
  < \infty.
\]
Therefore, selecting a subsequence of $\delta$s, we may assume that $\mu_\delta \toweakstar \mu \in \Mbf_\loc(\R^2)$, which entails $\mu_\delta \otimes \Lcal^{d-2} \toweakstar \mu \otimes \Lcal^{d-2}$.  Moreover, if $f_\delta \Lcal^d \toweakstar \gamma \in \Mbf_\loc(\R^d)$, then $\gamma$ must be of the form $f \Lcal^d$ with $f = f(x_3,\ldots,x_d)$ linear, since the space of measures of this form is finite-dimensional and hence weakly* closed. Thus, we see that there exists a Radon measure $\mu \in \Mbf_\loc(\R^2)$ and a linear map $f \colon \R^{d-2} \to \R$ such that
\[
  Eu =  P_0 \Bigl[ \mu \otimes \Lcal^{d-2} + f(x_3,\ldots,x_d) \Lcal^d_x \Bigr].
\]
This proves the claim.
\end{proof}

We can now finish the proof of case (ii) of our theorem:

\begin{proof}[Proof of Theorem~\ref{thm:good_blowups}~(ii)]
Like in the proof of part~(i) of the theorem, take a singular tangent Young measure $\nu \in \BDY_\loc(\R^d)$ at a point $x_0 \in \Omega$ as in Proposition~\ref{prop:localize_sing} and let $v \in \BD_\loc(\R^d)$ with $Ev = [\sigma]$. As before, it holds from the properties of tangent Young measures that
\[
  Ev = P_0 \abs{Ev}.
\]

\proofstep{Step~1.}
We first show the result in the case $a = \ee_1$, $b = \ee_2$, i.e.\ $P_0 = \sqrt{2} (\ee_1 \odot \ee_2)$. Under this asumption we may apply the dimensional reduction result from Lemma~\ref{lem:dim_reduction_e1e2} to get a Radon measure $\mu \in \Mbf_\loc(\R^2)$ and a linear function $f \colon \R^{d-2} \to \R$ for which
\[
  Ev = P_0 \Bigl[ \mu \otimes \Lcal^{d-2} + f(x_3,\ldots,x_d) \Lcal^d_x \Bigr].
\]

If $f$ is non-zero, $[\sigma] = Ev$ cannot be purely singular and so there exists an $\Lcal^d$-negligible set $N \subset \R^d$ such that $[\sigma] \restrict (\R^d \setminus N) = g \Lcal^d$ for some non-zero $g \in \Lrm_\loc^1(\R^d;\R^{d \times d})$. Hence, by virtue of Proposition~\ref{prop:localize_reg} and Preiss' existence result for non-zero tangent measures, there is $y_0 \in \R^d$ and a regular tangent Young measure $\kappa \in \BDY_\loc(\R^d)$ to $\sigma$ at $y_0$ with $[\kappa]$ a non-zero constant multiple of Lebesgue measure, namely $[\kappa] = \alpha P_0 \Lcal^d$ for some $\alpha \neq 0$. Hence, any $\tilde{v} \in \BD_\loc(\R^d)$ with $[\kappa] = E\tilde{v}$ is affine and in particular of the form exhibited in case~(ii) of the theorem (with $h_1, h_2$ linear). As in Step~4 of the proof of part~(i) of the present theorem, we can show that $\kappa$ is a singular tangent measure to $\nu$ at $x_0$ as well (in the sense that it satisfies the conclusion of Proposition~\ref{prop:localize_sing}). Hence, in the case $f$ is not identically zero, we have already shown part~(ii) of the present theorem with $\tilde{v}$ and $\kappa$ in place of $v$ and $\sigma$, respectively.

Next we treat the other case where $f \equiv 0$ and $Ev$ might be purely singular, that is
\begin{equation} \label{eq:Ev_struct}
  Ev = P_0 \, \mu \otimes \Lcal^{d-2}.
\end{equation}
In this situation we have that there exists a function $h \in \BD_\loc(\R^2)$ and $v_0 \in \R^d$ as well as a skew-symmetric matrix $R \in \R_\skw^{d \times d}$ such that
\[
  v(x) = v_0 + \begin{pmatrix} h^1(x_1,x_2) \\ h^2(x_1,x_2) \\ 0 \\ \vdots \\ 0 \end{pmatrix} + R x.
\]
This can roughly be seen as follows: By a mollification argument, we may assume that $v$ is smooth. Then,~\eqref{eq:Ev_struct} means that $\Ecal v(x) = P_0 g(x_1,x_2)$ for some $g \in \Crm^\infty(\R^2)$, $x \in \R^d$. Hence, the function
\[
  h(x_1,x_2) := \begin{pmatrix} v^1(x_1,x_2,0,\ldots,0) \\ v^2(x_1,x_2,0,\ldots,0) \end{pmatrix},
\]
has symmetrized gradient $\Ecal h(x_1,x_2) = \tilde{P}_0 g(x_1,x_2)$, where $\tilde{P}_0$ is the leading principal minor of $P_0$. Considering $h$ to be extended to a function on $\R^d$ (constant in $x_3,\ldots,x_d$) and with $d$ components ($h^3,\ldots,h^d = 0$), we have that $\Ecal h = \Ecal v$ and so, $v$ equals $h$ modulo a rigid deformation.

But for $h$ we can invoke Lemma~\ref{lem:BD_rigidity_2D} to deduce that
\[
  h(x_1,x_2) = h_1(x_1)\ee_2 + h_2(x_2)\ee_1.
\]
where $h_1, h_2 \in \BV_\loc(\R)$. Thus, we arrive at
\[
  v(x) = v_0 + h_1(x_1)\ee_2 + h_2(x_2)\ee_1 + Rx.
\]
This proves the claim for $a = \ee_1$, $b = \ee_2$.

\proofstep{Step~2.}
For general $a,b \in \R^d$ with $a \neq b$ take an invertible matrix $G \in \R^{d \times d}$ with $Ga = \ee_1$, $Gb = \ee_2$. Then $G(a \odot b)G^T = \ee_1 \odot \ee_2$ and hence, replacing $v(x)$ by
\[
  \tilde{v}(x) := Gv(G^T x),
\]
we have $E\tilde{v} = \sqrt{2} (\ee_1 \odot \ee_2) \abs{E\tilde{v}}$. By the previous step, there exist $\tilde{v}_0 \in \R^d$ and a skew-symmetric matrix $\tilde{R} \in \R_\skw^{d \times d}$ such that
\[
  \tilde{v}(x) = \tilde{v}_0 + h_1(x_1)\ee_2 + h_2(x_2)\ee_1 + \tilde{R}x.
\]
We can now transform back to the original $v(x) = G^{-1}\tilde{v}(G^{-T}x)$. In this process, we get
\begin{align*}
  G^{-1}h_1(G^{-T}x \cdot \ee_1)\ee_2 &= h_1(x \cdot a)b, \qquad\text{and}\\
  G^{-1}h_2(G^{-T}x \cdot \ee_2)\ee_1 &= h_2(x \cdot b)a.
\end{align*}
Also setting $v_0 := G^{-1} \tilde{v}_0$ and $R := G^{-1}RG^{-T}$, which is still skew-symmetric, we have proved the claimed splitting in the general situation as well.
\end{proof}

\begin{remark}
As a by-product of the proof, we note the following dichotomy for a measure $\mu \in \Mbf(\R^d;\R^N)$: At $\abs{\mu}$-almost every $x_0 \in \R^d$, either all tangent measures are purely singular (with respect to $\Lcal^d$), or $A_0 \Lcal^d \in \Tan(\mu,x_0)$, where $A_0 = \frac{\di \mu}{\di \abs{\mu}}(x_0)$.
\end{remark}

\subsection{The case $P_0 = a \odot a$}  \label{ssc:a_odot_a}

For this degenerate case we can essentially use the same techniques as in the previous sections, but there are some differences.

\begin{proof}[Proof of Theorem~\ref{thm:good_blowups}~(iii)]
Again we take a singular tangent Young measure $\sigma \in \BDY_\loc(\R^d)$ at a point $x_0 \in \Omega$ from Proposition~\ref{prop:localize_sing} and $v \in \BD_\loc(\R^d)$ with
\[
  Ev = [\sigma] = (a \odot a) \abs{Ev}.
\]

\proofstep{Step~1.} In case that $v$ is smooth and $a = \ee_1$, i.e.\ there exists $g \in \Crm^\infty(\R^d)$ such that
\[
  \Ecal v = (\ee_1 \odot \ee_1) g  \qquad\text{and}\qquad
  E^s v = 0,
\]
we may proceed analogously to Step~1 in the proof of Lemma~\ref{lem:dim_reduction_e1e2}, to get for $i = 2,\ldots,d$,
\[
  \nabla \Wcal u(x)_1^i = - \partial_i Eu(x)^1 = (- \partial_i g(x),0,\ldots,0),
\]
where as before $\Wcal u$ is the skew-symmetric part of $\nabla u$. This gives that $\Wcal u_1^i$ and hence also $\partial_i g$ only depend on the first component $x_1$ of $x$, $\partial_i g(x) = p_i(x_1)$ say. Define
\[
  h(x) := g(x) - p_2(x_1) x_2 - \cdots - p_d(x_1) x_d
\]
and observe that $\partial_i h \equiv 0$ for $i = 2,\ldots,d$. Hence we may write $h(x) = h(x_1)$ and have now decomposed $g$ as
\begin{equation} \label{eq:g_decomp}
  g(x) = h(x_1) + p_2(x_1) x_2 + \cdots + p_d(x_1) x_d.
\end{equation}

\proofstep{Step~2.}
For $v$ only from $\BD_\loc(\R^d)$, but still $a = \ee_1$, we use a smoothing argument very similar to Step~2 in the proof to Lemma~\ref{lem:dim_reduction_e1e2} together with the first step to see that
\begin{equation} \label{eq:Ev_aodota_form}
\begin{aligned}
  Ev = (\ee_1 \odot \ee_1) \Bigl[ \mu \otimes \Lcal^{d-1}
    &+ \gamma_2 \otimes (x_2 \Lcal_{x_2}) \otimes \Lcal^{d-2} \\
  &+ \gamma_3 \otimes \Lcal^d \otimes (x_3 \Lcal_{x_3}) \otimes \Lcal^{d-3} \\
  &+ \cdots \\
  &+ \gamma_d \otimes \Lcal^{d-2} \otimes (x_d \Lcal_{x_d}) \Bigr],
\end{aligned}
\end{equation}
where $\mu, \gamma_2, \ldots, \gamma_d \in \Mbf_\loc(\R)$ are signed measures. In fact, mollify $v$ to get $v_\delta \in \Crm^\infty(\R^d;\R^d)$ and apply Step~1 to the $v_\delta$ to see that $Ev_\delta = (\ee_1 \odot \ee_1) g_\delta \Lcal^d$ with $g_\delta$ of the form exhibited in~\eqref{eq:g_decomp}. Then use test functions of the form
\[
  \phi(x_1) \ONE_{Q^d(R)},  \qquad  \phi(x_1) x_2 \ONE_{Q^d(R)},  \qquad \ldots, \qquad
  \phi(x_1) x_d \ONE_{Q^d(R)}
\]
for $\phi \in \Crm_c((-R,R);[-1,1])$, $R > 0$, in a similar argument as before to see that all parts of the measures $(\ee_1 \odot \ee_1) g_\delta \Lcal^d$ converge separately. Thus, $Ev = \wslim_{\delta \todown 0} Ev_\delta$ has the form~\eqref{eq:Ev_aodota_form}.

Let $y_0 \in \R^d$ be such that there exists another (non-zero) singular tangent Young measure $\kappa \in \BDY_\loc(\R^d)$ to $\sigma$ at $y_0$ (in the sense of Proposition~\ref{prop:localize_sing}). Since then $[\kappa] \in \Tan(Ev,y_0)$ and all parts of $Ev$ are smooth in the variables $x_2,\ldots,x_d$ by~\eqref{eq:Ev_aodota_form}, every tangent measure will be constant in these variables (one can see this for example by testing the blow-up sequence with tensor products of $\Crm_c(\R)$-functions). Hence, $[\kappa]$ can be written in the form
\[
  [\kappa] = \tilde{\mu} \otimes \Lcal^{d-1}
\]
for some $\tilde{\mu} \in \Mbf_\loc(\R)$. As before we have that $\kappa$ is also a singular tangent Young measure to $\nu$ at the point $x_0$.

\proofstep{Step~3.}
We may now argue similarly to Step~2 of the proof of part~(ii) of the theorem in the previous section to get that there exists $h \in \BV_\loc(\R)$ as well as $\tilde{v}_0 \in \R^d$ and a skew-symmetric matrix $\tilde{R} \in \R_\skw^{d \times d}$ with
\[
  \tilde{v}(x) = \tilde{v}_0 + h(x_1)\ee_1 + \tilde{R}x.
\]
This shows the claim of case~(iii) of the theorem for $a = \ee_1$. For general $a$, we use a transformation like in Step~3 of the proof in the previous section.
\end{proof}

\subsection{Rigidity in 2D} \label{ssc:rigidity_2D}

To illustrate the previous rigidity argument in a more concrete situation, this section gives a complete analysis of solutions for the differential inclusion
\begin{equation} \label{eq:incl}
  \Ecal u \in \spn \{ P \}  \qquad\text{pointwise a.e.,}\qquad   u \in \LD_\loc(\R^2),
\end{equation}
for a \emph{fixed} symmetric matrix $P \in \R_\sym^{2 \times 2}$. The results presented here are not needed in the sequel, and for convenience we restrict our analysis to the space $\LD_\loc(\R^2)$ and omit extensions to $\BD_\loc(\R^2)$.

First we notice that we may always reduce the above problem to an equivalent differential inclusion with $P$ diagonal. Indeed, let $Q \in \R^{2 \times 2}$ be an orthogonal matrix such that
\[
  QPQ^T = \begin{pmatrix} \lambda_1 & \\ & \lambda_2 \end{pmatrix} =: \tilde{P},
  \qquad \text{$\lambda_1,\lambda_2 \in \R$.}
\]
Clearly, $u \in \LD_\loc(\R^2)$ solves~\eqref{eq:incl} if and only if $\tilde{u}(x) := Q u(Q^T x)$ solves
\[
  \Ecal \tilde{u} \in \spn \{ \tilde{P} \}  \qquad\text{pointwise a.e.,}
\]
so we may always assume that $P$ in~\eqref{eq:incl} is already diagonal.

According to Lemma~\ref{lem:sym_tensor_prod} we have three non-trivial cases to take care of, corresponding to the signs of the eigenvalues $\lambda_1$, $\lambda_2$; the trivial case $\lambda_1 = \lambda_2 = 0$, i.e.\ $P = 0$, was already settled in Lemma~\ref{lem:E_kernel}.

We will formulate our results on solvability of~\eqref{eq:incl} in terms of conditions on $g \in \Lrm_\loc^1(\R^2)$ in the differential equation
\[
  \Ecal u = P g \quad\text{a.e.,}  \qquad  u \in \LD_\loc(\R^2).
\]
With $g$ as an additional unknown this is clearly equivalent to~\eqref{eq:incl}.

First, consider the situation that $\lambda_1, \lambda_2 \neq 0$ and that these two eigenvalues have opposite signs. Then, from (the proof of) Lemma~\ref{lem:sym_tensor_prod}, we know that $P = a \odot b$ ($a \neq b$) for
\[
  a := \begin{pmatrix} \gamma \\ 1 \end{pmatrix},  \qquad
  b := \begin{pmatrix} \lambda_1 \gamma^{-1} \\ \lambda_2 \end{pmatrix},
  \qquad\text{where}\qquad \gamma := \sqrt{-\frac{\lambda_1}{\lambda_2}}.
\]

The result about solvability of~\eqref{eq:incl} for this choice of $P$ is:

\begin{proposition}[Rigidity for $P = a \odot b$]
Let $P = \Bigl( \begin{smallmatrix} \lambda_1 & \\ & \lambda_2 \end{smallmatrix} \Bigr) = a \odot b$, where $\lambda_1, \lambda_2 \in \R$ have opposite signs. Then, there exists a function $u \in \LD_\loc(\R^2)$ solving the differential equation
\[
  \Ecal u = Pg \quad\text{a.e.}
\]
if and only if $g \in \Lrm_\loc^1(\R^2)$ is of the form
\[
  g(x) = h_1(x \cdot a) + h_2(x \cdot b),  \qquad\text{$x \in \R^2$ a.e.,}
\]
where $h_1, h_2 \in \Lrm_\loc^1(\R)$. In this case, 
\[
  u(x) = u_0 + H_1(x \cdot a)b + H_2(x \cdot b)a + Rx,  \qquad\text{$x \in \R^2$ a.e.,}
\]
with $u_0 \in \R^2$, $R \in \R_\skw^{d \times d}$ and $H_1, H_2 \in \Wrm_\loc^{1,1}(\R)$ satisfying $H_1' = h_1$ and $H_2' = h_2$.
\end{proposition}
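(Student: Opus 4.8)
The plan is to leverage Lemma~\ref{lem:BD_rigidity_2D} (the 2D rigidity lemma), which is the exact analogue of this statement at the level of the symmetrized derivative, and upgrade it from $\BD_\loc$ to $\LD_\loc$ while also keeping track of the scalar density $g$. Concretely, suppose $u \in \LD_\loc(\R^2)$ solves $\Ecal u = Pg$ a.e.\ with $P = a \odot b$. Since $P$ has rank $2$, we have $\abs{Eu} = \abs{g}\,\abs{P}\,\Lcal^2$, so $Eu = \frac{a \odot b}{\abs{a \odot b}}\abs{Eu}$, and hence Lemma~\ref{lem:BD_rigidity_2D} applies: after the linear change of variables $\tilde u(x) := Gu(G^T x)$ with $Ga = \ee_1$, $Gb = \ee_2$ (exactly as in Step~2 of the proof of Theorem~\ref{thm:good_blowups}~(ii)), we reduce to $P = \ee_1 \odot \ee_2$. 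In that normalized setting the lemma's proof shows $u^1(x) = h_2(x_2)$ and $u^2(x) = h_1(x_1)$ distributionally, and now the extra information $E^s u = 0$ (i.e.\ $u \in \LD_\loc$) forces $h_1, h_2 \in W^{1,1}_\loc(\R)$ rather than merely $\BV_\loc(\R)$. Then $\Ecal u$ has off-diagonal entry $\tfrac12(h_1'(x_1) + h_2'(x_2))$ and vanishing diagonal entries, so $g(x) = \tfrac{1}{\sqrt 2}(h_1'(x_1) + h_2'(x_2))$, which after transforming back is exactly $g(x) = H_1'(x\cdot a) + H_2'(x\cdot b)$ for appropriate $H_1', H_2' \in L^1_\loc(\R)$. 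Relabeling $h_1 := H_1'$, $h_2 := H_2'$ gives the claimed form of $g$, and $u(x) = u_0 + H_1(x\cdot a)b + H_2(x\cdot b)a + Rx$ follows; the rigid-deformation term $u_0 + Rx$ appears because $\Ecal$ only determines $u$ up to its kernel (Lemma~\ref{lem:E_kernel}), and here, since $a$ and $b$ are linearly independent, one checks that the affine part of $h_i$ can be absorbed or left explicit as $u_0 + Rx$.

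For the converse direction, I would simply verify by the chain rule that if $g(x) = h_1(x\cdot a) + h_2(x\cdot b)$ with $h_i \in L^1_\loc(\R)$, then setting $H_i$ to be any antiderivative and $u(x) := u_0 + H_1(x\cdot a)b + H_2(x\cdot b)a + Rx$, one has $\nabla u(x) = H_1'(x\cdot a)\, b\otimes a + H_2'(x\cdot b)\, a\otimes b + R$, whence $\Ecal u(x) = H_1'(x\cdot a)\, a\odot b + H_2'(x\cdot b)\, a\odot b = \bigl(h_1(x\cdot a)+h_2(x\cdot b)\bigr)(a\odot b) = P g(x)$, with no singular part since $H_i \in W^{1,1}_\loc$ implies $u \in \LD_\loc(\R^2)$. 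This is a routine computation, parallel to the first paragraph of the proof of Lemma~\ref{lem:BD_rigidity_2D}.

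The only genuinely delicate point is the promotion from $\BV_\loc$ to $W^{1,1}_\loc$: in the proof of Lemma~\ref{lem:BD_rigidity_2D} the functions $h_1,h_2$ are obtained only as $\BV_\loc(\R)$ functions because the hypothesis there only controls $\abs{Eu}$. Here the stronger hypothesis $u \in \LD_\loc$ means $Eu = \Ecal u\,\Lcal^2$ with $\Ecal u \in L^1_\loc$; in the normalized coordinates, $2(Eu)^1_2 = (\partial_1 u^1 + \partial_2 u^2)\,\Lcal^2$ after using $\partial_1 u^1 = 0$ and $\partial_2 u^2 = 0$ distributionally becomes $(\partial_2 u^1 + \partial_1 u^2)\,\Lcal^2 = (h_2'(x_2) + h_1'(x_1))\,\Lcal^2$, and absolute continuity of this measure forces $h_1', h_2'$ (which a priori are signed measures on $\R$) to be $L^1_\loc$ functions — this is a one-dimensional disintegration/Fubini argument: if $h_1' + h_2'$ (viewed via a test-function tensor-product argument as in the proof of Lemma~\ref{lem:BD_rigidity_2D}) has no singular part as a measure on $\R^2$, each $h_i'$ has no singular part on $\R$. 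So the main obstacle is organizing this last measure-theoretic step cleanly; everything else is a transcription of the already-proven 2D rigidity lemma and a direct verification.
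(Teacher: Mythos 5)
Your proposal follows essentially the same route as the paper: the paper's entire proof of this proposition is the single sentence that it ``follows by virtue of Lemma~\ref{lem:BD_rigidity_2D} together with some elementary computations,'' and you have simply spelled out those elementary computations (the reduction to $a=\ee_1$, $b=\ee_2$, the converse verification by the chain rule, and the promotion of $h_1,h_2$ from $\BV_\loc$ to $\Wrm^{1,1}_\loc$ using $E^s u = 0$). Your treatment of the last point is correct: the singular parts $D^s h_1 \otimes \Lcal^1$ and $\Lcal^1 \otimes D^s h_2$ are concentrated on sets of the form $S_1 \times \R$ and $\R \times S_2$ with $S_i$ Lebesgue-null, so they cannot cancel, and absolute continuity of $Eu$ forces each to vanish.

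One step is stated incorrectly as written, though it is easily repaired. From $\Ecal u = Pg$ with $g$ signed you cannot conclude $Eu = \frac{a\odot b}{\abs{a\odot b}}\abs{Eu}$; the polar of $Eu$ is $\pm\frac{a\odot b}{\abs{a\odot b}}$ according to the sign of $g$, so the hypothesis of Lemma~\ref{lem:BD_rigidity_2D} is literally satisfied only where $g \geq 0$. The fix is to observe that the proof of that lemma never uses the sign: the slicing identity~\eqref{eq:Eu_slicing} is applied with $\xi = \ee_1$ and $\xi = \ee_2$, for which $\xi^T(\ee_1\odot\ee_2)\xi = 0$, so it only requires that the diagonal entries of $Eu$ vanish as measures in the adapted coordinates --- which holds for $Eu = (a\odot b)\,\mu$ with $\mu$ any signed measure. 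With that remark the lemma applies in your setting and the rest of your argument goes through.
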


\begin{proof}
This follows by virtue of Lemma~\ref{lem:BD_rigidity_2D} together with some elementary computations.
\end{proof}

In the case $\lambda_1 \neq 0$, $\lambda_2 = 0$, i.e.\ $P = \lambda_1 (\ee_1 \odot \ee_1)$, one could guess by analogy to the previous case that if $u \in \LD_\loc(\R^2)$ satisfies $\Ecal u = Pg$ for some $g \in \Lrm_\loc^1(\R)$, then $u$ and $g$ should only depend on $x_1$ up to a rigid deformation. This, however, is \emph{false}, as can be seen from the following example.

\begin{example}
Consider
\[
  P := \begin{pmatrix} 1 & \\ & 0 \end{pmatrix},  \qquad
  u(x) := \begin{pmatrix} 4 x_1^3 x_2 \\ -x_1^4 \end{pmatrix},  \qquad
  g(x) := 12x_1^2 x_2.
\]
Then, $u$ satisfies $\Ecal u = Pg$, but neither $u$ nor $g$ only depend on $x_1$.
\end{example}

The general statement reads as follows.

\begin{proposition}[Rigidity for $P = a \odot a$]
Let $P = \Bigl( \begin{smallmatrix} \lambda_1 & \\ & 0 \end{smallmatrix} \Bigr) = \lambda_1 (\ee_1 \odot \ee_1)$. Then, there exists a function $u \in \LD_\loc(\R^2)$ solving the differential equation
\[
  \Ecal u = Pg \quad\text{a.e.}
\]
if and only if $g \in \Lrm_\loc^1(\R^2)$ is of the form
\[
  g(x) = h(x_1) + p(x_1)x_2,  \qquad\text{$x \in \R^2$ a.e.,}
\]
where $h,p \in \Lrm_\loc^1(\R)$. In this case,
\[
  u(x) = u_0 + \lambda_1 \begin{pmatrix} H(x_1) + \Pcal'(x_1)x_2 \\ -\Pcal(x_1) \end{pmatrix} + Rx,
  \qquad\text{$x \in \R^2$ a.e.,}
\]
with $u_0 \in \R^2$, $R \in \R_\skw^{d \times d}$ and $H \in \Wrm_\loc^{1,1}(\R)$, $\Pcal \in \Wrm_\loc^{2,1}(\R)$ satisfying $H_1' = h_1$ and $\Pcal'' = p$.
\end{proposition}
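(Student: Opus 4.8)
The plan is to prove both implications directly, treating the statement as the two-dimensional, absolutely continuous version of Steps~1 and~2 in the proof of Theorem~\ref{thm:good_blowups}~(iii). As there, the case $\lambda_1 = 0$ (i.e.\ $P = 0$) is already settled by Lemma~\ref{lem:E_kernel}, so I assume $\lambda_1 \neq 0$ throughout and use freely the convention that a $\BD_\loc$-function with vanishing symmetrized derivative is a rigid deformation (the consequence of Lemma~\ref{lem:E_kernel} noted right after its proof).

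For the ``if'' direction I would simply verify the explicit formula. Given $g(x) = h(x_1) + p(x_1)x_2$ with $h,p \in \Lrm_\loc^1(\R)$, choose $H \in \Wrm_\loc^{1,1}(\R)$ with $H' = h$ and $\Pcal \in \Wrm_\loc^{2,1}(\R)$ with $\Pcal'' = p$ (so $\Pcal' \in \Wrm_\loc^{1,1}(\R)$), and set
\[
  u(x) := u_0 + \lambda_1 \begin{pmatrix} H(x_1) + \Pcal'(x_1)x_2 \\ -\Pcal(x_1) \end{pmatrix} + Rx.
\]
Then $u \in \LD_\loc(\R^2)$, and a direct computation gives $\partial_1 u^1 = \lambda_1\bigl(h(x_1) + p(x_1)x_2\bigr) = \lambda_1 g$, $\partial_2 u^1 = \lambda_1\Pcal'(x_1) = -\partial_1 u^2$ and $\partial_2 u^2 = 0$, whence $\Ecal u = \lambda_1 g\,(\ee_1 \odot \ee_1) = Pg$ a.e.; the rigid-deformation ambiguity in $u$ (and the ambiguity of $H,\Pcal$ up to additive constants, resp.\ affine terms) is absorbed by $u_0$ and $R$.

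For the ``only if'' direction I would follow the smoothing scheme used in the paper. Let $u \in \LD_\loc(\R^2)$ solve $\Ecal u = Pg = \lambda_1 g\,(\ee_1 \odot \ee_1)$ with $g \in \Lrm_\loc^1(\R^2)$, and set $u_\delta := \rho_\delta \conv u \in \Crm^\infty(\R^2;\R^2)$, so that $\Ecal u_\delta = \lambda_1 g_\delta\,(\ee_1 \odot \ee_1)$ with $g_\delta := \rho_\delta \conv g \to g$ in $\Lrm_\loc^1(\R^2)$. Writing $w_\delta := \tfrac12(\partial_2 u_\delta^1 - \partial_1 u_\delta^2)$ for the free entry of the skew part $\Wcal u_\delta$ and applying identity~\eqref{eq:Wu_identity} with $i = 1$, $j = 2$, together with the fact that the only non-zero entry of $\Ecal u_\delta$ is the $(1,1)$-entry $\lambda_1 g_\delta$, I obtain $\partial_2 w_\delta = 0$ and $\partial_1 w_\delta = \lambda_1 \partial_2 g_\delta$; hence $w_\delta$ and $\partial_2 g_\delta$ depend on $x_1$ only, so $g_\delta(x) = h_\delta(x_1) + p_\delta(x_1)x_2$ with $h_\delta, p_\delta \in \Crm^\infty(\R)$. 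To pass to the limit I would note that, along a subsequence, $g_\delta(x_1,\cdot) \to g(x_1,\cdot)$ in $\Lrm^1(-R,R)$ for a.e.\ $x_1 \in (-R,R)$ and every $R > 0$ (by $\Lrm^1$-convergence on squares and Fubini), and since the affine functions form a closed, two-dimensional subspace of $\Lrm^1(-R,R)$, the section $g(x_1,\cdot)$ is affine on $(-R,R)$; letting $R \to \infty$ this holds on all of $\R$ for a.e.\ $x_1$, so $g(x) = h(x_1) + p(x_1)x_2$ with $h(x_1) := \tfrac12\int_{-1}^1 g(x_1,t)\dd t$ and $p(x_1) := \tfrac32\int_{-1}^1 t\,g(x_1,t)\dd t$, both in $\Lrm_\loc^1(\R)$ by Fubini. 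Finally, with $H$, $\Pcal$ and $u_*$ built from this $g$ as in the ``if'' direction, that computation gives $\Ecal u_* = Pg = \Ecal u$, so $\Ecal(u - u_*) = 0$ and hence $u - u_* = u_0 + Rx$ for some $u_0 \in \R^2$, $R \in \R_\skw^{2 \times 2}$; this is exactly the claimed formula.

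The only step I expect to require genuine care is the preservation of the affine-in-$x_2$ structure of the sections under the $\Lrm_\loc^1$-limit, which I resolve via closedness of the finite-dimensional space of affine functions on a bounded interval; the remainder is a routine computation combined with~\eqref{eq:Wu_identity} and the kernel characterization of $\Ecal$.
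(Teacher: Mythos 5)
Your proof is correct and follows essentially the same route as the paper, whose own proof simply points back to the arguments of Section~\ref{ssc:a_odot_a} (smooth case via the identity~\eqref{eq:Wu_identity} for the skew part, then mollification) and declares the converse an elementary check. The only difference is cosmetic: where the paper passes to the limit $\delta \todown 0$ by testing against $\phi(x_1)\ONE_{Q^d(R)}$ and $\phi(x_1)x_2\ONE_{Q^d(R)}$ and invoking weak* closedness of a finite-dimensional space of measures, you use a.e.-convergence of the $x_1$-sections and closedness of the affine functions in $\Lrm^1(-R,R)$ --- both rest on the same finite-dimensionality --- and your final subtraction of the explicit solution $u_*$ to identify $u$ up to a rigid deformation is exactly the ``elementary'' verification the paper alludes to.
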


\begin{proof}
From the arguments in Section~\ref{ssc:a_odot_a} we know that whenever $u \in \LD_\loc(\R^2)$ solves the differential equation $\Ecal u = Pg$, then $g$ (and hence also $u$) must have the form exhibited in the statement of the proposition. Conversely, it is elementary to check that $u$ as defined above satisfies $\Ecal u = P g$.
\end{proof}

Finally, we consider the case where the eigenvalues $\lambda_1$ and $\lambda_2$ are non-zero and have the same sign. Then, $P \neq a \odot b$ for any $a,b \in \R^2$ by Lemma~\ref{lem:sym_tensor_prod}. Define the differential operator
\[
  \Acal_P := \lambda_2 \partial_{11} + \lambda_1 \partial_{22}
\]
and notice that whenever a function $g \colon \R^2 \to \R$ satisfies $\Acal_P g \equiv 0$ distributionally, then by elliptic regularity (generalized Weyl's Lemma), we have that in fact $g \in \Crm^\infty(\R^2)$.

\begin{proposition}[Rigidity for $P \neq a \odot b$] \label{prop:P_neq_a_odot_b_2D}
Let $P = \Bigl( \begin{smallmatrix} \lambda_1 & \\ & \lambda_2 \end{smallmatrix} \Bigr)$, where $\lambda_1, \lambda_2 \in \R$ have the same sign. Then, there exists a function $u \in \LD_\loc(\R^2)$ solving the differential equation
\[
  \Ecal u = Pg \quad\text{a.e.}
\]
if and only if $g \in \Lrm_\loc^1(\R^2)$ satisfies
\[
  \Acal_P g \equiv 0.
\]
Moreover, in this case both $g$ and $u$ are smooth.
\end{proposition}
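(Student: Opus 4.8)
The plan is to convert the equation $\Ecal u = Pg$ into a first-order system for the single scalar entry of the skew part of $\nabla u$ and to read off the solvability condition as its integrability condition. Write $P = \Bigl(\begin{smallmatrix}\lambda_1 & \\ & \lambda_2\end{smallmatrix}\Bigr)$ and, for $u \in \LD_\loc(\R^2)$, set $w := \Wcal u_2^1 = \tfrac{1}{2}(\partial_2 u^1 - \partial_1 u^2)$, the single independent entry of $\Wcal u \in \R_\skw^{2 \times 2}$. Identity~\eqref{eq:Wu_identity} then gives, in the sense of distributions,
\[
  \partial_1 w = \partial_2 \Ecal u_1^1 - \partial_1 \Ecal u_1^2 = \lambda_1 \partial_2 g, \qquad
  \partial_2 w = \partial_2 \Ecal u_2^1 - \partial_1 \Ecal u_2^2 = -\lambda_2 \partial_1 g,
\]
where we used that the off-diagonal entries of $Pg$ vanish. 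Equating the mixed second derivatives $\partial_2\partial_1 w = \partial_1\partial_2 w$ forces $\lambda_1 \partial_{22} g = -\lambda_2 \partial_{11} g$, that is $\Acal_P g \equiv 0$; this proves necessity. Since $\lambda_1$ and $\lambda_2$ have the same sign, $\Acal_P$ is (uniformly) elliptic, so by the generalized Weyl Lemma recalled above we get $g \in \Crm^\infty(\R^2)$; then $\nabla w$ is smooth, hence $w$ is smooth, and $\nabla u = Pg + \Wcal u$ is smooth, so $u \in \Crm^\infty(\R^2;\R^2)$.

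For sufficiency I would reverse this construction. Given smooth $g$ with $\Acal_P g \equiv 0$, the identity $\partial_2(\lambda_1 \partial_2 g) = \partial_1(-\lambda_2 \partial_1 g)$ is exactly the curl-free condition that, by the Poincar\'e lemma on the contractible domain $\R^2$, produces $w \in \Crm^\infty(\R^2)$ with $\partial_1 w = \lambda_1 \partial_2 g$ and $\partial_2 w = -\lambda_2 \partial_1 g$. Prescribing $\nabla u := Pg + \Bigl(\begin{smallmatrix} 0 & w \\ -w & 0 \end{smallmatrix}\Bigr)$, i.e.\ $\nabla u^1 = (\lambda_1 g, w)$ and $\nabla u^2 = (-w, \lambda_2 g)$, one checks that the two defining relations for $w$ are precisely the curl-free conditions for these two vector fields, so a second application of the Poincar\'e lemma yields $u^1, u^2 \in \Crm^\infty(\R^2)$. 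By construction the symmetric part of $\nabla u$ equals $Pg$, hence $u \in \Crm^\infty(\R^2;\R^2) \subset \LD_\loc(\R^2)$ solves $\Ecal u = Pg$, and both $g$ and $u$ are smooth as claimed.

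I do not expect a serious obstacle here; the delicate points are bookkeeping rather than substance. Before regularity is available one only knows $\Ecal u \in \Lrm_\loc^1$ and $\Wcal u$ a distribution, but~\eqref{eq:Wu_identity} and the derivation of $\Acal_P g \equiv 0$ hold distributionally, after which elliptic regularity upgrades $g$, $w$ and $u$ to $\Crm^\infty$; and one must observe that ``$\lambda_1, \lambda_2$ of the same sign'' is exactly what makes $\Acal_P$ elliptic, as was noted right before the statement. It is worth remarking that, by Lemma~\ref{lem:sym_tensor_prod}~(ii), the same-sign case is precisely the case $P \neq a \odot b$, so Proposition~\ref{prop:P_neq_a_odot_b_2D} fits beside the two preceding propositions to give the complete trichotomy of solutions of~\eqref{eq:incl} in two space dimensions.
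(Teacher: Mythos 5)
Your proof is correct and follows essentially the same route as the paper's: the relations $\partial_1 w = \lambda_1\partial_2 g$, $\partial_2 w = -\lambda_2\partial_1 g$ are exactly the paper's conditions~\eqref{eq:solv_cond} (with $w = -f$), the integrability condition for $w$ is $\Acal_P g \equiv 0$, and sufficiency is the same double application of the Poincar\'{e} lemma. The only (harmless) difference is that you run the necessity computation directly in the sense of distributions via~\eqref{eq:Wu_identity} and then invoke Weyl's lemma, whereas the paper first mollifies $u$ and $g$; both handle the low initial regularity correctly.
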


\begin{proof}
First assume that $g \in \Crm^\infty(\R^2)$ satisfies $\Acal_P g \equiv 0$. Define
\[
  F := \nabla g \begin{pmatrix} 0 & \lambda_2 \\ -\lambda_1 & 0 \end{pmatrix}
  = (- \lambda_1 \partial_2 g, \lambda_2 \partial_1 g)
\]
and observe (we use $\curl \, (h_1,h_2) = \partial_2 h_1 - \partial_1 h_2$)
\[
  \curl F = - \lambda_1 \partial_{22} g - \lambda_2 \partial_{11} g = - \Acal_P g \equiv 0.
\]
Hence, there exists $f \in \Crm^\infty(\R^2)$ with $\nabla f = F$, in particular
\begin{equation} \label{eq:solv_cond}
  \partial_1 f = - \lambda_1 \partial_2 g,  \qquad  \partial_2 f = \lambda_2 \partial_1 g.
\end{equation}

Put
\[
  \Ucal := \begin{pmatrix} \lambda_1 & 0 \\ 0 & \lambda_2 \end{pmatrix} g
  + \begin{pmatrix} 0 & -1 \\ 1 & 0 \end{pmatrix} f.
\]
We calculate (this time we apply the curl row-wise), using~\eqref{eq:solv_cond},
\begin{equation} \label{eq:curl_U}
  \curl \, \Ucal = \begin{pmatrix} \curl \, \bigl( \lambda_1 g, -f \bigr) \\
    \curl \, \bigl( f, \lambda_2 g \bigr) \end{pmatrix}
  = \begin{pmatrix} \lambda_1 \partial_2 g + \partial_1 f \\ 
    \partial_2 f - \lambda_2 \partial_1 g \end{pmatrix} \equiv 0.
\end{equation}
Let $u \in \Crm^\infty(\R^2;\R^2)$ be such that $\nabla u = \Ucal$. Then, $\Ecal u = P g$.

For the other direction, it suffices to show that $\Ecal u = Pg$ implies $\Acal_P g \equiv 0$, the smoothness of $u,g$ follows from the first step. Notice further that by a mollification argument we may in fact assume that $u \in \Crm^\infty(\R^2;\R^2)$, $g \in \Crm^\infty(\R^2)$, since the conditions $\Ecal u = Pg$ and $\Acal_P g \equiv 0$ are preserved under smoothing. So, splitting the gradient into its symmetric and skew-symmetric parts,
\[
  \nabla u = \begin{pmatrix} \lambda_1 & 0 \\ 0 & \lambda_2 \end{pmatrix} g
  + \begin{pmatrix} 0 & -1 \\ 1 & 0 \end{pmatrix} f
\]
for some function $f \in \Crm^\infty(\R^2)$. As in~\eqref{eq:curl_U}, this implies the conditions~\eqref{eq:solv_cond} for $\nabla g, \nabla f$. Hence,
\[
  \nabla f = \nabla g \begin{pmatrix} 0 & \lambda_2 \\ -\lambda_1 & 0 \end{pmatrix}
  = ( -\lambda_1 \partial_2 g, \lambda_2 \partial_1 g ).
\]
Since the curl of $\nabla f$ vanishes, we get
\[
  0 \equiv \curl \nabla f = - \lambda_1 \partial_{22} g - \lambda_2 \partial_{11} g = -\Acal_P g,
\]
so $g$ satisfies $\Acal_P g \equiv 0$.
\end{proof}

\begin{remark}[Harmonic functions]
By Lemma~\ref{lem:sym_tensor_prod}, the simplest matrix that cannot be written as a symmetric tensor product is the identity matrix $P = I_2 = \Bigl( \begin{smallmatrix} 1 &  \\  & 1 \end{smallmatrix} \Bigr)$. In this case $\Acal_P$ is the Laplacian and the differential equation $\Ecal u = I_2 g$ is solvable in $\LD_\loc(\R^2)$ if and only if $g$ is harmonic.
\end{remark}

\begin{remark}[Comparison to gradients]
Proposition~\ref{prop:P_neq_a_odot_b_2D} should be contrasted with the corresponding situation for gradients. If $u \in \Wrm_\loc^{1,1}(\R^2;\R^2)$ satisfies
\[
  \nabla u \in \spn \{ P \}  \qquad\text{pointwise a.e.,}
\]
and $\rank P = 2$, then necessarily $u$ is affine, a proof of which can be found in Lemma~3.2 of~\cite{Rind10?LSYM} (this rigidity result is closely related to Hadamard's jump condition, also see~\cite[Proposition~2]{BalJam87FPMM},~\cite[Lemma~1.4]{DeLe09NARO},~\cite[Lemma~2.7]{Mull99VMMP} for related results). Notice that this behavior for the gradient is in sharp contrast to the behavior for the symmetrized gradient, as can be seen from the following example.
\end{remark}

\begin{example} \label{ex:weak_rigidity}
Let
\[
  P := \begin{pmatrix} 1 & \\ & 1 \end{pmatrix},  \qquad
  u(x) := \begin{pmatrix} \ee^{x_1} \sin(x_2) \\ -\ee^{x_1} \cos(x_2) \end{pmatrix},  \qquad
  g(x) := \ee^{x_1} \sin(x_2).
\]
Then, one can check that $g$ is harmonic and $u$ satisfies $\Ecal u = Pg$. So, the fact that $P$ cannot be written as a symmetric tensor product does not imply that that any solution to the differential inclusion $\Ecal u \in \spn \{P\}$ must be affine.
\end{example}

\section{Jensen-type inequalities} \label{sc:Jensen}

In this section we establish the following necessary conditions for BD-Young measures, which will later yield general lower semicontinuity and relaxation results as corollaries.

\begin{theorem}[Jensen-type inequalities] \label{thm:BDY_Jensen}
Let $\nu \in \BDY(\Omega)$ be a BD-Young measure. Then, for all symmetric-quasiconvex $h \in \Crm(\R_\sym^{d \times d})$ with linear growth at infinity it holds that
\begin{align*}
  h \biggl( \dprb{\id,\nu_{x_0}} + \dprb{\id,\nu_{x_0}^\infty} \frac{\di \lambda_\nu}{\di \Lcal^d}(x_0) \biggr)
    &\leq \dprb{h,\nu_{x_0}} + \dprb{h^\#,\nu_{x_0}^\infty} \frac{\di \lambda_\nu}{\di \Lcal^d}(x_0)
\intertext{for $\Lcal^d$-a.e.\ $x_0 \in \Omega$, and}
  h^\# \bigl( \dprb{\id,\nu_{x_0}^\infty} \bigr) &\leq \dprb{h^\#,\nu_{x_0}^\infty}
\end{align*}
for $\lambda_\nu^s$-a.e.\ $x_0 \in \Omega$.
\end{theorem}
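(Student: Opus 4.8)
The plan is to prove both inequalities by localizing: I would pass to tangent Young measures via Propositions~\ref{prop:localize_reg} and~\ref{prop:localize_sing}, at which point the Young measure is \emph{homogeneous} (regular case) or of the \emph{good} structure supplied by Theorem~\ref{thm:good_blowups} (singular case), and then I would feed a generating sequence with affine boundary data into the symmetric-quasiconvexity inequality and let the generating index tend to infinity. The only extra ingredient needed is the approximation of $h$ from Lemma~2.3 of~\cite{AliBou97NUIG}: a decreasing sequence $(h_k)\subset\Ebf(\R^d;\R_\sym^{d\times d})$ with $h_k\downarrow h$ and $h_k^\infty\downarrow h^\#$ pointwise and with uniformly bounded linear growth constants. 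I would also record once and for all that, since $h$ is symmetric-quasiconvex with linear growth, the recession function $h^\#$ is itself symmetric-quasiconvex, positively $1$-homogeneous and globally Lipschitz, hence $h^\#\in\Ebf(\R^d;\R_\sym^{d\times d})$ with $(h^\#)^\infty=h^\#$.

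\textbf{The regular (first) inequality.} Fix an $\Lcal^d$-a.e.\ $x_0\in\Omega$ for which Proposition~\ref{prop:localize_reg} applies and for which in addition $\dpr{\abs{\frarg},\nu_{x_0}}<\infty$ and $\tfrac{\di\lambda_\nu}{\di\Lcal^d}(x_0)$ exists. Let $\sigma$ be the regular tangent Young measure; its barycenter is the constant-matrix measure $A_0\Lcal^d$ with $A_0:=\dpr{\id,\nu_{x_0}}+\dpr{\id,\nu_{x_0}^\infty}\tfrac{\di\lambda_\nu}{\di\Lcal^d}(x_0)$, and $\lambda_\sigma$ is a multiple of $\Lcal^d$. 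Restricting $\sigma$ to the open unit cube $Q$ (so that $\lambda_\sigma(\partial Q)=0$) and applying Lemma~\ref{lem:boundary_adjust} with underlying deformation $x\mapsto A_0x$, I would obtain $(v_j)\subset(\Wrm^{1,1}\cap\Crm^\infty)(Q;\R^d)$ with $Ev_j\toY\sigma|_Q$ and $v_j|_{\partial Q}=A_0x|_{\partial Q}$. Since $v_j-A_0(\frarg)\in\LD_0(Q)$, symmetric-quasiconvexity of $h$ in its $\LD_0$-form gives $h(A_0)\le\dashint_Q h(\Ecal v_j)\dd z\le\dashint_Q h_k(\Ecal v_j)\dd z$. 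Letting $j\to\infty$ (now $h_k\in\Ebf$, and the $v_j$ are Sobolev so there is no singular part) and reading off the value from~\eqref{eq:loc_reg_ddpr} with $K=Q$ yields
\[
  h(A_0)\le\dprb{h_k,\nu_{x_0}}+\tfrac{\di\lambda_\nu}{\di\Lcal^d}(x_0)\,\dprb{h_k^\infty,\nu_{x_0}^\infty}.
\]
Letting $k\to\infty$ and using monotone convergence ($h_k\le h_1$ which has linear growth and $\nu_{x_0}$ has finite first moment; $h_k^\infty$ is uniformly bounded on $\Sbb^{N-1}$ where the probability measure $\nu_{x_0}^\infty$ lives) gives the first inequality.

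\textbf{The singular (second) inequality.} Fix $x_0$ at which Theorem~\ref{thm:good_blowups} applies, let $\sigma\in\BDY_\loc(\R^d)$ be the good singular tangent Young measure, set $A_\infty:=\dpr{\id,\nu_{x_0}^\infty}$, and pick $v\in\BD_\loc(\R^d)$ with $Ev=[\sigma]$; recall $\sigma_y=\delta_0$ a.e., $\sigma_y^\infty=\nu_{x_0}^\infty$ $\lambda_\sigma$-a.e., $\lambda_\sigma\ne0$, and $[\sigma]=A_\infty\lambda_\sigma$. In case~(i) of Theorem~\ref{thm:good_blowups} the function $v$ is affine, so $[\sigma]$ is a constant multiple of $\Lcal^d$ and hence $\lambda_\sigma=c\Lcal^d$ for some $c>0$ with $\Ecal v\equiv cA_\infty$; running the cube argument above verbatim with $h^\#$ (which is already in $\Ebf$, so no approximation is needed) and using $\dpr{h^\#,\sigma_y}=h^\#(0)=0$ together with~\eqref{eq:loc_sing_ddpr} gives $c\,h^\#(A_\infty)=h^\#(cA_\infty)\le c\dpr{h^\#,\nu_{x_0}^\infty}$, hence the claim after dividing by $c$ (the degenerate sub-case $A_\infty=0$ is covered identically, choosing the test set $U$ with $\lambda_\sigma(U)>0$). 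In cases~(ii) and~(iii) the function $v$ is a sum of one-directional functions in the directions $a$ and $b$ (resp.\ in the single direction $a$), plus an affine term. Here I would choose a parallelotope $U$ with faces orthogonal to $a$ and $b$ (resp.\ to $a$) and $(\Lcal^d+\lambda_\sigma)(\partial U)=0$, and apply the averaging/folding procedure of Figure~\ref{fig:averaging}: stacking and reflecting translates of a generating sequence of $\sigma$ along these directions produces, on a large copy $NU$ of $U$, a new generating sequence $(w_j)$ with affine boundary data $w_j|_{\partial(NU)}=A_\infty x|_{\partial(NU)}$ whose limiting Young measure still has singular angular marginal $\nu_{x_0}^\infty$. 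Then $w_j-A_\infty(\frarg)\in\LD_0(NU)$, symmetric-quasiconvexity of $h^\#$ gives $h^\#(A_\infty)\le\dashint_{NU}h^\#(\Ecal w_j)\dd z$, and passing to the limit (using $h^\#\in\Ebf$ and~\eqref{eq:loc_sing_ddpr}, or its averaged counterpart) yields $h^\#(A_\infty)\le\dpr{h^\#,\nu_{x_0}^\infty}$.

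\textbf{Main obstacle.} The first inequality and case~(i) of the second are a routine combination of the localization principles, symmetric-quasiconvexity, and the Alibert--Bouchitt\'e approximation. The genuine difficulty is the averaging construction in cases~(ii) and~(iii): one must glue translated and reflected copies of a generating sequence of $\sigma$ into a single sequence with affine boundary values on a large cell \emph{without creating spurious concentration} at the gluing interfaces, while verifying that the limiting Young measure's singular angular part is still $\nu_{x_0}^\infty$. This is exactly the point at which the one-directional (or two-one-directional) structure provided by Theorem~\ref{thm:good_blowups} is indispensable — a one-directional function is left invariant by translations orthogonal to its active direction and is only shifted in its argument by translations along it, so the symmetrized derivatives match across the interfaces — and it is also why parallelotopes adapted to $a$ and $b$ must replace the usual cubes.
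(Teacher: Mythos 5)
Your strategy coincides with the paper's: localize via Propositions~\ref{prop:localize_reg} and~\ref{prop:localize_sing}, invoke Theorem~\ref{thm:good_blowups}, adjust boundary values with Lemma~\ref{lem:boundary_adjust}, apply symmetric-quasiconvexity to a generating sequence with affine boundary data, and pass to the limit using the Alibert--Bouchitt\'{e} approximation $h_k \todown h$, $h_k^\infty \todown h^\#$. The regular inequality and Case~(i) of the singular one are carried out correctly and essentially verbatim as in the paper (the paper uses the unit ball rather than the cube, which is immaterial).

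The one place where your sketch would fail if implemented literally is the gluing in Cases~(ii)/(iii). First, \enquote{reflecting} translates is the wrong operation: reflecting a one-directional profile $h_1(x\cdot a)b$ reverses the sign of its symmetrized derivative, so the reflected cells would generate the antipodal image of $\nu_{x_0}^\infty$; since $h^\#$ is only \emph{positively} $1$-homogeneous, the resulting limit would be $\dprb{h^\#,\tfrac12(\nu_{x_0}^\infty+\check{\nu}_{x_0}^\infty)}$ rather than $\dprb{h^\#,\nu_{x_0}^\infty}$, and the inequality would not follow. Second, the claim that \enquote{the symmetrized derivatives match across the interfaces} is not true for the faces with normals $a$ and $b$: tiling by translates of the parallelotope $P$, the traces of $v(x)=v_0+h_1(x\cdot a)b+h_2(x\cdot b)a+Rx$ across an $(F_a,F_a')$-interface differ by the constant $q_1 b$ with $q_1 = Dh_1((-\tfrac12,\tfrac12))$ (and by $q_2 a$ across the $b$-faces); only across the faces with normals $\xi_3,\dots,\xi_d$ do they genuinely match. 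The paper's fix is to extend the boundary-adjusted generating sequence $P$-periodically and \emph{add the explicit staircase function} $q_1\floorn{x\cdot a+\tfrac12}b+q_2\floorn{x\cdot b+\tfrac12}a$, whose jumps exactly cancel the interface jumps, so that after rescaling $u_n(x):=w_n(nx)/n$ one obtains an $\LD$-sequence converging in $\Lrm^1$ to $(\lambda_\sigma(P)A_0+R_0)x$; one then does a second boundary adjustment before applying quasiconvexity, and the cell-wise change of variables plus~\eqref{eq:loc_sing_ddpr} gives the result. With the reflection replaced by this periodic-extension-plus-staircase device, your argument closes.
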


The proof is contained in Lemmas~\ref{lem:Jensen_regular} and~\ref{lem:Jensen_singular} below (notice that if $h$ is symmetric-quasiconvex, then so is its generalized recession function $h^\#$).

\subsection{Jensen-type inequality at regular points}

The proof at regular points is straightforward.

\begin{lemma} \label{lem:Jensen_regular}
Let $\nu \in \BDY(\Omega)$ be a BD-Young measure. Then, for $\Lcal^d$-a.e.\ $x_0 \in \Omega$ it holds that
\[
  h \biggl( \dprb{\id,\nu_{x_0}} + \dprb{\id,\nu_{x_0}^\infty} \frac{\di \lambda_\nu}{\di \Lcal^d}(x_0) \biggr)
  \leq \dprb{h,\nu_{x_0}} + \dprb{h^\#,\nu_{x_0}^\infty} \frac{\di \lambda_\nu}{\di \Lcal^d}(x_0)
\]
for all symmetric-quasiconvex $h \in \Crm(\R_\sym^{d \times d})$ with linear growth at infinity.
\end{lemma}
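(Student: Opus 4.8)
The plan is to exploit the localization principle at regular points (Proposition~\ref{prop:localize_reg}) to reduce the Jensen-type inequality to the statement that every regular tangent Young measure is generated by a homogeneous sequence with affine underlying deformation, so that symmetric-quasiconvexity applies directly. First I would fix $x_0 \in \Omega$ to be one of the $\Lcal^d$-almost every points furnished by Proposition~\ref{prop:localize_reg}, and additionally require (still $\Lcal^d$-a.e.) that $x_0$ be a Lebesgue point of $x \mapsto \dprb{\id,\nu_x}$ and of $x \mapsto \dprb{\id,\nu_x^\infty}\tfrac{\di\lambda_\nu}{\di\Lcal^d}(x)$, so that the barycenter behaves well under blow-up. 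Let $\sigma \in \BDY_\loc(\R^d)$ be the regular tangent Young measure at $x_0$; by~\eqref{eq:loc_reg_1}--\eqref{eq:loc_reg_2} it is \emph{homogeneous}, i.e. $\sigma_y = \nu_{x_0}$, $\sigma_y^\infty = \nu_{x_0}^\infty$ a.e., and $\lambda_\sigma = \tfrac{\di\lambda_\nu}{\di\Lcal^d}(x_0)\,\Lcal^d$.

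Next I would identify the barycenter of $\sigma$. Since $[\sigma]\in\Tan([\nu],x_0)$ and $\sigma$ is homogeneous, the barycenter is the constant matrix
\[
  A_0 := \dprb{\id,\nu_{x_0}} + \dprb{\id,\nu_{x_0}^\infty}\frac{\di\lambda_\nu}{\di\Lcal^d}(x_0)
\]
times Lebesgue measure, i.e. $[\sigma] = A_0\,\Lcal^d$ (the jump contributions along rescaled boundaries escape to infinity, exactly as in the proof of Proposition~\ref{prop:localize_reg}). Hence any underlying deformation $v$ with $Ev = [\sigma] = A_0\,\Lcal^d$ is affine, by the remark following Lemma~\ref{lem:E_kernel}. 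Now take a generating sequence $(v_j)\subset\Wrm^{1,1}\cap\Crm^\infty$ for $\sigma$ on a fixed bounded Lipschitz domain $\omega$ with $\Lcal^d(\partial\omega)=0$ and with affine boundary values $v_j|_{\partial\omega} = v|_{\partial\omega}$ (Lemma~\ref{lem:boundary_adjust}, applicable since $\lambda_\sigma(\partial\omega)=0$). For a symmetric-quasiconvex $h$ with linear growth, apply~\eqref{eq:sym_qc_aodotb_conv}-type reasoning — more precisely, the definition of symmetric-quasiconvexity extended to $\LD_0(\omega)$ as recorded in Section~\ref{ssc:sym_qc} — to get
\[
  h(A_0) \leq \dashint_\omega h\bigl(\Ecal v_j(z)\bigr)\dd z.
\]
Passing $j\to\infty$ and using that $Ev_j \toY \sigma$ together with the extended representation~\eqref{eq:ext_repr_Caratheodory} (applied to $h$ and to $h^\#$, which controls the concentration part since $h$ has linear growth) yields
\[
  h(A_0) \leq \frac{1}{\abs{\omega}}\ddprb{\ONE_\omega\otimes h,\sigma}
  = \dprb{h,\nu_{x_0}} + \dprb{h^\#,\nu_{x_0}^\infty}\frac{\di\lambda_\nu}{\di\Lcal^d}(x_0),
\]
where the last equality is~\eqref{eq:loc_reg_ddpr} with $h^\infty$ replaced by $h^\#$ (the recession function in the representation is $h^\#$ when $h$ is merely symmetric-quasiconvex with linear growth; note $h^\#$ is continuous on $\R_\sym^{d\times d}\setminus\{0\}$ and positively $1$-homogeneous, which is all that is needed).

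\textbf{Main obstacle.} The technical subtlety I expect to be the sticking point is the limit passage in $h$ along the generating sequence when $h$ is not in $\Ebf(\omega;\R_\sym^{d\times d})$ but only symmetric-quasiconvex with linear growth — its recession function is the generalized one $h^\#$ rather than a genuine limit $h^\infty$. This is handled by the fact (recorded in Section~\ref{ssc:sym_qc}) that symmetric-quasiconvex integrands with linear growth are globally Lipschitz, so $h^\#(A) = \limsup_{t\to\infty} h(tA)/t$ with no $A'$-perturbation, and by approximating $h$ from above by a decreasing sequence in $\Ebf$ (Lemma~2.3 of~\cite{AliBou97NUIG}, cited in Section~\ref{ssc:integrands}) with $h_k\downarrow h$ and $h_k^\infty\downarrow h^\#$ pointwise; applying the inequality for each $h_k$, where~\eqref{eq:loc_reg_ddpr} applies verbatim, and then letting $k\to\infty$ via monotone convergence on the right-hand side gives the claim. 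A secondary point is ensuring the averaging domain $\omega$ can be chosen with $\lambda_\sigma(\partial\omega)=\Lcal^d(\partial\omega)=0$; since $\lambda_\sigma$ is a constant multiple of $\Lcal^d$ this is immediate for, say, $\omega$ a ball of generic radius.
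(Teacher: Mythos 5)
Your proposal is correct and follows essentially the same route as the paper: localize at a regular point via Proposition~\ref{prop:localize_reg}, adjust boundary values to the affine map $x \mapsto A_0 x$ via Lemma~\ref{lem:boundary_adjust}, apply symmetric-quasiconvexity, and handle the passage to the limit for a merely symmetric-quasiconvex $h$ by approximating from above with a decreasing sequence $(h_k) \subset \Ebf$ satisfying $h_k \todown h$, $h_k^\infty \todown h^\#$, followed by monotone convergence. The only cosmetic caveat is that your intermediate display invoking~\eqref{eq:loc_reg_ddpr} directly for $h$ and $h^\#$ is not literally licensed (since $h^\infty$ need not exist), but you correctly identify and repair this in your final paragraph, which is exactly how the paper argues.
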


\begin{proof}
Use Proposition~\ref{prop:localize_reg} to get a regular tangent Young measure $\sigma \in \BDY(\Bbb^d)$ to $\nu$ at a suitable $x_0 \in \Omega$ (this is possible for $\Lcal^d$-almost every $x_0 \in \Omega$). With
\[
  A_0 := \dprb{\id,\nu_{x_0}} + \dprb{\id,\nu_{x_0}^\infty} \frac{\di \lambda_\nu}{\di \Lcal^d}(x_0),
\]
it holds that $[\sigma] = A_0 \Lcal^d$. From Lemma~\ref{lem:boundary_adjust} take a sequence $(v_n) \subset (\Wrm^{1,1} \cap \Crm^{\infty})(\Bbb^d;\R^d)$ with $Ev_n \toY \sigma$ in $\Ybf(\Bbb^d;\R_\sym^{d \times d})$ and $v_n|_{\partial \Bbb^d}(x) = A_0 x$ on $\partial \Omega$. Since the function $h$ is quasiconvex,
\[
  h(A_0) \leq \dashint_{\Bbb^d} h(\Ecal v_n) \dd z.
\]
By virtue of the approximation result cited in Section~\ref{ssc:integrands} we get a sequence $(\ONE_{\Bbb^d} \otimes h_k) \subset \Ebf(\Bbb^d;\R_\sym^{d \times d})$ with $h_k \todown h$, $h_k^\infty \todown h^\#$ pointwise and $\sup_k \norm{\ONE_{\Bbb^d} \otimes h_k}_{\Ebf} < \infty$. Thus, for all $k \in \N$,
\begin{align*}
  h(A_0) &\leq \limsup_{n \to \infty} \dashint_{\Bbb^d} h(\Ecal v_n) \dd z
    \leq \lim_{n \to \infty} \dashint_{\Bbb^d} h_k(\Ecal v_n) \dd z \\
  &= \frac{1}{\omega_d} \ddprb{\ONE_{\Bbb^d} \otimes h_k,\sigma}
    = \dprb{h_k,\nu_{x_0}} + \dprb{h_k^\infty,\nu_{x_0}^\infty} \frac{\di \lambda_\nu}{\di \Lcal^d}(x_0),
\end{align*}
where the last equality follows from~\eqref{eq:loc_reg_ddpr}. Now let $k \to \infty$ and invoke the monotone convergence theorem to conclude.
\end{proof}

\subsection{Jensen-type inequality at singular points}

We now prove a Jensen-type inequality at singular points, utilizing the good blow-ups from Theorem~\ref{thm:good_blowups}. At points where the (good) blow-up is affine, this is a straightforward application of the quasiconvexity. At (almost all) other points, we can decompose the blow-up into one or two one-directional functions and an affine part (cf.\ Figure~\ref{fig:singular_blowups}). This special structure allows us to average the functions into an affine function, which then allows the application of quasiconvexity, see Figure~\ref{fig:averaging} for an illustration of this averaging procedure.

\begin{figure}[t]
\centering
\begingroup
  \makeatletter
  \providecommand\color[2][]{%
    \errmessage{(Inkscape) Color is used for the text in Inkscape, but the package 'color.sty' is not loaded}
    \renewcommand\color[2][]{}%
  }
  \providecommand\transparent[1]{%
    \errmessage{(Inkscape) Transparency is used (non-zero) for the text in Inkscape, but the package 'transparent.sty' is not loaded}
    \renewcommand\transparent[1]{}%
  }
  \providecommand\rotatebox[2]{#2}
  \ifx\svgwidth\undefined
    \setlength{\unitlength}{298pt}
  \else
    \setlength{\unitlength}{\svgwidth}
  \fi
  \global\let\svgwidth\undefined
  \makeatother
  \begin{picture}(1,0.53062195)%
    \put(0,0){\includegraphics[width=\unitlength]{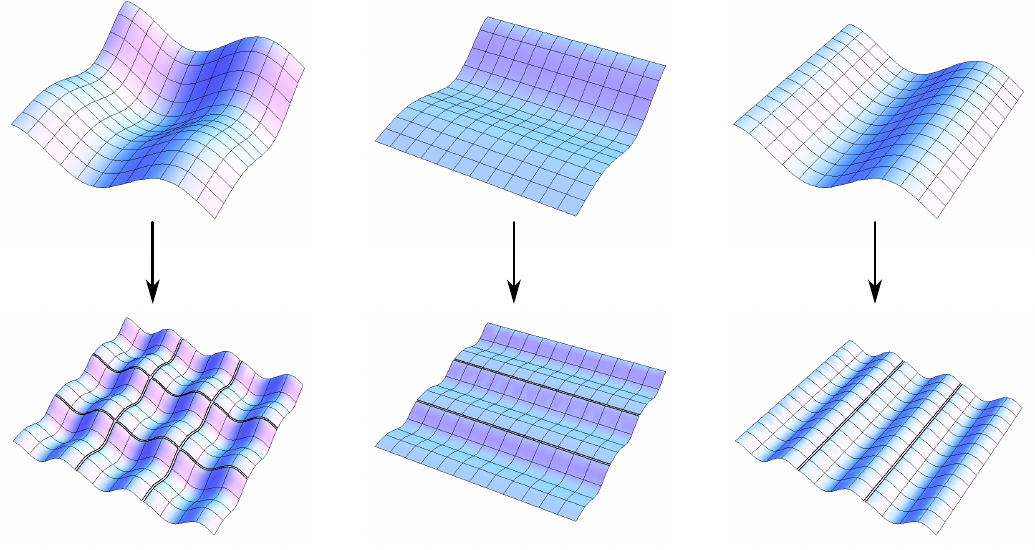}}%
    \put(0.65771808,0.39639377){\color[rgb]{0,0,0}\makebox(0,0)[lb]{\smash{$+$}}}%
    \put(0.30872483,0.39639377){\color[rgb]{0,0,0}\makebox(0,0)[lb]{\smash{$=$}}}%
    \put(0.65771808,0.10109178){\color[rgb]{0,0,0}\makebox(0,0)[lb]{\smash{$+$}}}%
    \put(0.30872483,0.10109178){\color[rgb]{0,0,0}\makebox(0,0)[lb]{\smash{$=$}}}%
    \put(0.60671137,0.26753473){\color[rgb]{0,0,0}\makebox(0,0)[lb]{\smash{averaging}}}%
  \end{picture}%
\endgroup

\caption{Staircase construction for the singular Jensen-type inequality.}
\label{fig:averaging}
\end{figure}

\begin{lemma} \label{lem:Jensen_singular}
Let $\nu \in \BDY(\Omega)$ be a BD-Young measure. Then, for $\lambda_\nu^s$-almost every $x_0 \in \Omega$ it holds that
\[
  g \bigl( \dprb{\id,\nu_{x_0}^\infty} \bigr) \leq \dprb{g,\nu_{x_0}^\infty}
\]
for all symmetric-quasiconvex and positively $1$-homogeneous $g \in \Crm(\R_\sym^{d \times d})$.
\end{lemma}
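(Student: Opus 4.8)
The plan is to use the good blow-up construction from Theorem~\ref{thm:good_blowups}, together with the localization principle at singular points, to reduce the desired Jensen inequality to an application of symmetric-quasiconvexity on an affine function. First I would fix a point $x_0 \in \supp \lambda_\nu^s$ satisfying all the relevant measure-theoretic properties, at which Proposition~\ref{prop:localize_sing} and Theorem~\ref{thm:good_blowups} apply, and let $\sigma \in \BDY_\loc(\R^d)$ be the good singular tangent Young measure there, with an underlying deformation $v \in \BD_\loc(\R^d)$, $Ev = [\sigma]$. Write $A_0 := \dprn{\id,\nu_{x_0}^\infty}$; by~\eqref{eq:loc_sing_2} the blow-up satisfies $Ev = P_0 \abs{Ev}$ (with $P_0$ as in the statement of Theorem~\ref{thm:good_blowups}), and $\sigma_y^\infty = \nu_{x_0}^\infty$ $\lambda_\sigma$-a.e.

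The heart of the matter is then a \emph{case distinction} mirroring Theorem~\ref{thm:good_blowups}. In case~(i), $v$ is affine, so $Ev = A_0' \Lcal^d$ for the constant matrix $A_0' = $ (a multiple of) $P_0$ (or $v$ is a rigid deformation if $P_0 = 0$); after a boundary adjustment via Lemma~\ref{lem:boundary_adjust} one obtains a generating sequence with affine boundary values on a cube $Q$, applies symmetric-quasiconvexity of $g$ directly — replacing $\Crm_c^\infty$ by $\LD_0$ as allowed for upper semicontinuous linear-growth integrands — and then passes to the limit using the approximation by a decreasing sequence $(\ONE_Q \otimes g_k) \subset \Ebf$ together with~\eqref{eq:loc_sing_ddpr}, exactly as in the proof of Lemma~\ref{lem:Jensen_regular}. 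In cases~(ii) and~(iii) the blow-up $v$ is a sum of one or two one-directional functions plus a rigid deformation; here the key device is the \emph{averaging/staircase construction} illustrated in Figure~\ref{fig:averaging}: one truncates $h_1, h_2$ and replaces them by finitely-oscillating staircase approximations, so that on a large parallelotope with face normals $a$ and $b$ the resulting function $w$ has affine boundary trace $A_0 x$ (up to lower-order corrections) while $\Ecal w$ takes only the two values $0$ and (a scalar multiple of) $P_0$ with the correct volume fractions dictated by $A_0 = \dprn{\id,\nu_{x_0}^\infty}$. Averaging together with the $a\odot b$-convexity~\eqref{eq:sym_qc_aodotb_conv} of $g$ (which follows from symmetric-quasiconvexity) then yields $g(A_0) \le \dprn{g, \nu_{x_0}^\infty}$ in the limit.

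Concretely I would: (1) select $x_0$; (2) produce the good blow-up $\sigma$ and $v$; (3) in case~(i), run the affine argument and invoke~\eqref{eq:loc_sing_ddpr}; (4) in cases~(ii)/(iii), construct the periodic/staircase competitor on a parallelotope adapted to $a$ (and $b$), check that its symmetrized gradient generates — after blow-up — the same tangent object, apply $a\odot b$-convexity and average; (5) remove the truncation and approximate $g$ from above by $\Ebf$-integrands, concluding by monotone convergence. The main obstacle I expect is step~(4): making the staircase construction rigorous — controlling the trace on the boundary of the parallelotope (the gluing across faces parallel to $b$ in case~(ii) is exactly where one-directionality is essential and the argument would fail without Theorem~\ref{thm:good_blowups}), ensuring the competitor lies in $\LD_0$ of the rescaled domain, and verifying that the oscillating sequence's symmetrized-gradient Young measure has the right barycenter and singular part so that~\eqref{eq:loc_sing_ddpr} can be applied to pass to the limit. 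The $P_0 = a \odot a$ degenerate case is slightly delicate because the ``remainder'' need not be linear, but by Theorem~\ref{thm:good_blowups}(iii) the good blow-up is still purely one-directional, so the same averaging over an $a$-adapted slab applies.
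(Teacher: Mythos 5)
Your overall architecture --- select a good $x_0$, take the good singular tangent Young measure $\sigma$ with underlying deformation $v$ from Theorem~\ref{thm:good_blowups}, split into the three cases, handle the affine case by quasiconvexity plus~\eqref{eq:loc_sing_ddpr}, and handle the one-directional cases by a staircase/averaging construction on a parallelotope adapted to $a$ and $b$ --- is exactly the paper's, and your case~(i) is correct as described.

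There is, however, a genuine gap in the core of your cases~(ii)/(iii). You propose to build the competitor by truncating $h_1,h_2$ and replacing them by finitely-oscillating staircases so that $\Ecal w$ takes only the two values $0$ and a multiple of $P_0$ with prescribed volume fractions, and then to conclude via the $a\odot b$-convexity~\eqref{eq:sym_qc_aodotb_conv}. This cannot yield $g(A_0)\le\dprn{g,\nu_{x_0}^\infty}$: such a competitor only sees the \emph{barycenter} $A_0=\dprn{\id,\nu_{x_0}^\infty}$ and the single direction $P_0$, whereas the right-hand side is an average of $g$ over the full probability measure $\nu_{x_0}^\infty$, whose support is in general not contained in $\spn\{P_0\}$ and is not reachable from $A_0$ along symmetric-rank-one lines. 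Concretely, a two-valued laminate with values $0$ and $tP_0$ in fractions $1-\theta,\theta$ combined with positive $1$-homogeneity gives only the tautology $g(A_0)\le\theta g(tP_0)=g(\theta tP_0)=g(A_0)$. The construction must instead carry the \emph{generating sequence} of $\sigma$: take $(v_n)$ with $Ev_n\toY\sigma$ on $P$ and $v_n|_{\partial P}=v|_{\partial P}$ (Lemma~\ref{lem:boundary_adjust}), extend $P$-periodically, and add the staircase $q_1\lfloor x\cdot a+\frac12\rfloor b+q_2\lfloor x\cdot b+\frac12\rfloor a$, whose sole role is to cancel the \emph{constant} jumps $q_1b$, $q_2a$ that $v$ exhibits across opposite faces of $P$ --- this is precisely where the one-directional structure of Theorem~\ref{thm:good_blowups}~(ii) enters. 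After rescaling $u_n(x)=w_n(nx)/n$ one obtains affine boundary data $\lambda_\sigma(P)A_0x$ (note the factor $\lambda_\sigma(P)$, which you drop; it is cancelled only at the very end by $1$-homogeneity of $g$), one applies full symmetric-quasiconvexity of $g$ rather than~\eqref{eq:sym_qc_aodotb_conv}, and periodicity together with~\eqref{eq:loc_sing_ddpr} gives $\lim_n\int_P g(\Ecal u_n)\dd x=\ddprb{\ONE_P\otimes g,\sigma}=\dprn{g,\nu_{x_0}^\infty}\,\lambda_\sigma(P)$. Your step~(4) does gesture at applying~\eqref{eq:loc_sing_ddpr}, but the competitor you actually describe does not generate $\sigma$, so that step fails as written.
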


\begin{proof}
Theorem~\ref{thm:good_blowups} (which uses the singular localization principle, Proposition~\ref{prop:localize_sing}) on the existence of good blow-ups yields the existence of a singular tangent Young measure $\sigma \in \BDY_\loc(\R^d)$ to $\nu$ at $\lambda_\nu^s$-almost every $x_0 \in \Omega$. Let $[\sigma] = Ev$ for some $v \in \BD_\loc(\R^d)$ and define
\[
  A_0 := \dprb{\id,\nu_{x_0}^\infty}.
\]
Observe that by~\eqref{eq:loc_sing_ddpr}, $Ev = [\sigma] = A_0 \lambda_\sigma$. Moreover, depending on the value of $A_0$, one of the cases (i),~(ii),~(iii) in Theorem~\ref{thm:good_blowups} holds.

\proofstep{Case 1: $A_0 \notin \setn{a \odot b}{a,b \in \R^d \setminus \{0\}}$ (possibly $A_0 = 0$).}\\
By Theorem~\ref{thm:good_blowups}~(i), $v$ is affine, and multiplying $v$ by a constant, we may assume without loss of generality that $Ev = A_0 \Lcal^d$. Adding a rigid deformation if necessary, we may in fact assume $v(x) = A_0x$. Now restrict $\sigma$ to the unit ball $\Bbb^d$ and by virtue of Lemma~\ref{lem:boundary_adjust} take a sequence $(v_n) \subset (\Wrm^{1,1} \cap \Crm^{\infty})(\Bbb^d;\R^d)$ with $Ev_n \toY \sigma$ in $\Ybf(\Bbb^d;\R_\sym^{d \times d})$ and $v_n|_{\partial \Bbb^d}(x) = A_0 x$ on $\partial \Bbb^d$. Since $g$ is quasiconvex,
\[
  g(A_0) \leq \dashint_{\Bbb^d} g(\Ecal v_n) \dd z.
\]
Finally, we may use~\eqref{eq:loc_sing_ddpr} to get
\[
  g(A_0) \leq \limsup_{n \to \infty} \dashint_{\Bbb^d} g(\Ecal v_n) \dd x
  = \frac{1}{\omega_d} \ddprb{\ONE_{\Bbb^d} \otimes g,\sigma} = \dprb{g,\nu_{x_0}^\infty}.
\]
This proves the claim in this case.

\proofstep{Case 2: $A_0 = q(a \odot b)$, where $a,b \in \Sbb^{d-1}, q \in \R \setminus \{0\}$ and $a \neq b$.}\\
Let $P$ be an open unit parallelotope with its mid-point at the origin and with two face normals $a,b$. The other face normals are orthogonal to $a$ and $b$, yet otherwise arbitrary, i.e.\ if $\xi_3,\ldots,\xi_d \in \Sbb^{d-1}$ extend $a,b$ to a basis of $\R^d$ and satisfy $\xi_3,\ldots,\xi_d \perp \spn\{a,b\}$, then
\[
  P = \setb{ x \in \R^d }{ \abs{x \cdot a}, \abs{x \cdot b}, \abs{x \cdot \xi_3}, \ldots, \abs{x \cdot \xi_d} \leq \textstyle\frac{1}{2} }.
\]
We also set $P(x_0,r) := x_0 + rP$, where $x_0 \in \R^d$, $r > 0$. Put all the principal vectors of $P$ (i.e.\ the vectors lying in the edges) as columns into the matrix $X \in \R^{d \times d}$. See Figure~\ref{fig:parallelotope} for notation.

\begin{figure}[t]
\centering
\begingroup
  \makeatletter
  \providecommand\color[2][]{%
    \errmessage{(Inkscape) Color is used for the text in Inkscape, but the package 'color.sty' is not loaded}
    \renewcommand\color[2][]{}%
  }
  \providecommand\transparent[1]{%
    \errmessage{(Inkscape) Transparency is used (non-zero) for the text in Inkscape, but the package 'transparent.sty' is not loaded}
    \renewcommand\transparent[1]{}%
  }
  \providecommand\rotatebox[2]{#2}
  \ifx\svgwidth\undefined
    \setlength{\unitlength}{141.7984375pt}
  \else
    \setlength{\unitlength}{\svgwidth}
  \fi
  \global\let\svgwidth\undefined
  \makeatother
  \begin{picture}(1,0.54146511)%
    \put(0,0){\includegraphics[width=\unitlength]{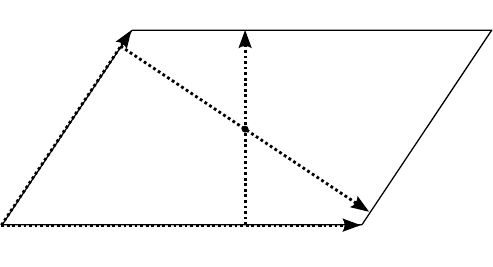}}%
    \put(0.51904657,0.40291018){\color[rgb]{0,0,0}\makebox(0,0)[lb]{\smash{$a$}}}%
    \put(0.71368906,0.1546705){\color[rgb]{0,0,0}\makebox(0,0)[lb]{\smash{$b$}}}%
    \put(0.49647933,0.51574641){\color[rgb]{0,0,0}\makebox(0,0)[lb]{\smash{$F_a'$}}}%
    \put(0.34132969,0.01080431){\color[rgb]{0,0,0}\makebox(0,0)[lb]{\smash{$F_a$}}}%
    \put(0.87730158,0.24211857){\color[rgb]{0,0,0}\makebox(0,0)[lb]{\smash{$F_b'$}}}%
    \put(0.01128366,0.25340219){\color[rgb]{0,0,0}\makebox(0,0)[lb]{\smash{$F_b$}}}%
    \put(0.51903555,0.27912091){\color[rgb]{0,0,0}\makebox(0,0)[lb]{\smash{$0$}}}%
    \put(-0.05974586,0.45086558){\color[rgb]{0,0,0}\makebox(0,0)[lb]{\smash{$z_a = X \ee_1$}}}%
    \put(0.54326669,0.00798341){\color[rgb]{0,0,0}\makebox(0,0)[lb]{\smash{$z_b = X \ee_2$}}}%
  \end{picture}%
\endgroup

\caption{Parallelotope notation.}
\label{fig:parallelotope}
\end{figure}

By Theorem~\ref{thm:good_blowups}~(ii), there exist functions $h_1,h_2 \in \BV_\loc(\R)$, a vector $v_0 \in \R^d$, and a skew-symmetric matrix $R \in \R_\skw^{d \times d}$ such that
\begin{equation} \label{eq:blowup_split_form}
  v(x) = v_0 + h_1(x \cdot a)b + h_2(x \cdot b)a + Rx.
\end{equation}
Without loss of generality we may assume that $v_0 = 0$ and $R = 0$. Moreover, we may additionally suppose that
\begin{equation} \label{eq:lambda_sigma_prop}
  \lambda_\sigma(P) > 0  \qquad\text{and}\qquad  \lambda_\sigma(\partial P) = 0.
\end{equation}
This can be achieved by taking a larger parallelotope $P' = tP \supset P$ ($t > 1$) with $\lambda_\sigma(P') > 0$, $\lambda_\sigma(\partial P') = 0$ if necessary, and then modifying the blow-up radii $r_n \todown 0$ to $r_n' := t r_n$.

Let $F_a, F_a' \subset \partial P$ be the two faces of $P$ with normal $a$ and such that $F_a$ lies in the affine hyperplane $H_a - a/2$, where $H_a := \set{ x \in \R^d }{ x \cdot a = 0 }$. Likewise define $F_b, F_b'$ and also $F_3, F_3', \ldots, F_d, F_d'$ for the remaining parallel face pairs. Then, the special form~\eqref{eq:blowup_split_form} of $v$ and the observation that the vectors $z_a = X \ee_1$, $z_b = X \ee_2$ (say) with $F_a' = F_a + z_a$, $F_b' = F_b + z_b$ satisfy
\[
  z_a \perp b  \qquad\text{and}\qquad  z_b \perp a,
\]
together yield
\[
  v|_{F_a'} - v|_{F_a}(\frarg - z_a) \equiv q_1 b,   \qquad v|_{F_b'} - v|_{F_b}(\frarg - z_b) \equiv q_2 a
\]
where $q_1 = h(1/2) - h(-1/2) = Dh_1((-1/2,1/2))$ and $q_2 = Dh_2((-1/2,1/2))$, as well as
\[
  v|_{F_k'} - v|_{F_k}(\frarg - X\ee_k) \equiv 0  \qquad\text{for $k = 3,\ldots,d$.}
\]
By the chain rule in $\BV$,
\begin{align*}
  Ev(P) = (q_1 + q_2) a \odot b,
\end{align*}
but on the other hand from the properties of $\sigma$, see~\eqref{eq:loc_sing_1}, we have
\[
  Ev(P) = [\sigma](P) = \dprb{\id,\nu_{x_0}^\infty} \lambda_\sigma(P) = A_0 \lambda_\sigma(P)
  = q (a \odot b) \lambda_\sigma(P),
\]
and so in particular
\[
  q \cdot \lambda_\sigma(P) = q_1 + q_2.
\]

By virtue of the Boundary Adjustment Lemma~\ref{lem:boundary_adjust}, we take a BD-norm bounded sequence $(v_n) \subset (\Wrm^{1,1} \cap \Crm^{\infty})(P;\R^d)$ with $v_n|_{\partial P} = v|_{\partial P}$ such that $Ev_n \toY \sigma$ in $\Ybf(P;\R_\sym^{d \times d})$. Extend $v_n$ to all of $\R^d$ by periodicity (with respect to the periodicity cell $P$) and define
\[
  w_n := v_n(x) + q_1 \floorBB{x \cdot a + \frac{1}{2}} b + q_2 \floorBB{x \cdot b + \frac{1}{2}} a,  \qquad x \in P.
\]
Clearly, $(w_n) \subset \BD(P)$ and one checks that the $Ew_n$ in fact do not charge the gluing surfaces. Indeed, the size of the jump incurred over the boundary of each copy of $P$ from the gluing of the $v_n$ is exactly compensated for by the staircase function. For example, over each $(F_a, F_a')$-interface, the first term in the definition of $w_n$ incurs a jump of magnitude $-q_1 b$, but at the same time the staircase term gives a jump of size $q_1 b$ over the same gluing interface, whence in $w_n$ no jump remains. Thus, $(w_n) \subset \LD_\loc(\R^d)$.

Now set
\[
  u_n(x) := \frac{w_n(nx)}{n}  \qquad x \in P,
\]
which lies in $\LD(P)$ and satisfies
\[
  \Ecal u_n(x) = \sum_{z \in \{0,\ldots,n-1\}^d} \Ecal v_n(nx - Xz) \ONE_{P(Xz/n,1/n)}(x).
\]

Next, we show that for some skew-symmetric matrix $R_0 \in \R_\skw^{d \times d}$,
\[
  u_n  \quad\to\quad  (\lambda_\sigma(P) A_0 + R_0)x  \qquad\text{in $\Lrm^1(P;\R^d)$.}
\]
To see this, first observe
\[
  \normBB{\frac{v_n(nx)}{n}}_{\Lrm^1(P;\R^d)} = \frac{1}{n}\norm{v_n}_{\Lrm^1(P;\R^d)}
  \quad\to\quad  0  \qquad\text{as $n \to \infty$}
\]
by a change of variables. On the other hand,
\[
  \frac{1}{n} \biggl( q_1 \floorBB{nx \cdot a + \frac{1}{2}} b + q_2 \floorBB{nx \cdot b + \frac{1}{2}} a \biggl) \quad\to\quad
  \bigl[ q_1 (b \otimes a) + q_2 (a \otimes b) \bigr] x
\]
uniformly. The symmetric part of the matrix on the right hand side is $(q_1 + q_2)a \odot b = \lambda_\sigma(P) A_0$ and so the claim follows. Subtracting $R_0 x$ from $v_n, v$, we may even assume that $R_0 = 0$.

We can now use Lemma~\ref{lem:boundary_adjust} again to get a sequence $(\tilde{u}_n) \subset \LD(P;\R^d)$ satisfying $\tilde{u}_n|_{\partial P}(x) = \lambda_\sigma(P) A_0 x$ on $\partial P$ such that for all $g$ as in the statement of the lemma,
\[
  \lim_{n \to \infty} \int_P g(\Ecal \tilde{u}_n) \dd x
  = \lim_{n \to \infty} \int_P g(\Ecal u_n) \dd x,
\]
by using the fact that $(E\tilde{u}_n)$ and $(Eu_n)$ generate the same (unnamed) Young measure.

The boundary conditions of $\tilde{u}_n$ together with the quasiconvexity of $g$ imply (notice $\abs{P} = 1$)
\[
  g \bigl( \lambda_\sigma(P) A_0 \bigr) \leq \int_P g(\Ecal \tilde{u}_n) \dd z.
\]
This allows us to calculate
\begin{align*}
  \lambda_\sigma(P) g(A_0) &\leq \lim_{n \to \infty} \int_P g(\Ecal \tilde{u}_n) \dd x
    = \lim_{n \to \infty} \int_P g(\Ecal u_n) \dd x \\
  &= \lim_{n \to \infty} \sum_{z \in \{0,\ldots,n-1\}^d} \int_{P(Xz/n,1/n)}
    g \bigl( \Ecal v_n(nx - Xz) \bigr) \dd x \\
  &= \lim_{n \to \infty} \sum_{z \in \{0,\ldots,n-1\}^d} \frac{1}{n^d} \int_P g(\Ecal v_n) \dd y \\
  &= \lim_{n \to \infty} \int_P g(\Ecal v_n) \dd y = \ddprb{\ONE_P \otimes g, \sigma}
  = \dprb{g,\nu_{x_0}^\infty} \lambda_\sigma(P),
\end{align*}
where the two last equalities follow from~\eqref{eq:loc_sing_ddpr} in conjunction with~\eqref{eq:lambda_sigma_prop}. Hence we have also shown the claim in this case.

\proofstep{Case 3: $A_0 = q(a \odot a)$, where $a \in \Sbb^{d-1}, q \in \R$.}\\
This case follows exactly like before, but using a parallelotope of which we only prescribe one face normal $a$ instead of $a,b$, and with
\[
  v(x) = v_0 + h(x \cdot a)a + Rx.
\]
in place of~\eqref{eq:blowup_split_form} by Theorem~\ref{thm:good_blowups}~(iii).
\end{proof}

\section{Lower semicontinuity and relaxation} \label{sc:lsc}

The Jensen-type inequalities from the previous Theorem~\ref{thm:BDY_Jensen} can be employed to easily yield lower semicontinuity and relaxation results in the space $\BD(\Omega)$, where here and in all of the following $\Omega \subset \R^d$ is a bounded Lipschitz domain with boundary unit inner normal $n_\Omega \colon \partial \Omega \to \Sbb^{d-1}$.

The main lower semicontinuity theorem of this work was already announced as Theorem~\ref{thm:BD_lsc_teaser} in the introduction:

\begin{theorem}[Lower semicontinuity in $\BD$] \label{thm:BD_lsc}
Let $f \colon \cl{\Omega} \times \R_\sym^{d \times d} \to \R$ satisfy the following assumptions:
\begin{itemize}
  \item[(i)] $f$ is a Carath\'{e}odory function,
  \item[(ii)] $\abs{f(x,A)} \leq M(1+\abs{A})$ for some $M > 0$ and all $x \in \cl{\Omega}$, $A \in \R_\sym^{d \times d}$,
  \item[(iii)] $f(x,\frarg)$ is symmetric-quasiconvex for all $x \in \cl{\Omega}$,
  \item[(iv)] the (strong) recession function $f^\infty(x,A)$ exists for all $x \in \cl{\Omega}$, $A \in \R_\sym^{d \times d}$ in the sense of~\eqref{eq:f_infty_def_Lip} and is (jointly) continuous on $\cl{\Omega} \times \R_\sym^{d \times d}$.
\end{itemize}
Then, the functional
\begin{equation} \label{eq:F_def}
\begin{aligned}
  \Fcal(u) &:= \int_\Omega f \bigl( x, \Ecal u(x) \bigr) \dd x + \int_\Omega f^\infty \Bigl( x,
    \frac{\di E^s u}{\di \abs{E^s u}}(x) \Bigr) \dd \abs{E^s u}(x) \\
  &\qquad + \int_{\partial \Omega} f^\infty \bigl( x, u|_{\partial \Omega}(x) \odot n_\Omega(x) \bigr)
    \dd \Hcal^{d-1}(x),  \qquad u \in \BD(\Omega),
\end{aligned}
\end{equation}
is sequentially lower semicontinuous with respect to weak*-convergence in the space $\BD(\Omega)$
\end{theorem}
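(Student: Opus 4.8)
The plan is to run the usual Young measure blow-up scheme: dispose of the boundary term by extending to a larger domain, pass to the generalized Young measure generated by the symmetrized derivatives of the sequence, rewrite the limit functional through that Young measure, and then feed in the Jensen-type inequalities of Theorem~\ref{thm:BDY_Jensen}. So let $u_j\toweakstar u$ in $\BD(\Omega)$; after passing to a subsequence we may assume $\Fcal(u_j)\to\liminf_j\Fcal(u_j)$, and — there being nothing to prove otherwise — that this limit is finite, so $\sup_j\norm{u_j}_{\BD(\Omega)}<\infty$.

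\emph{Step 1 (removing the boundary term).} Fix a bounded Lipschitz domain $\Omega'$ with $\cl\Omega\subset\subset\Omega'$ and extend $f$ to a Carath\'eodory integrand $\tilde f$ on $\cl{\Omega'}\times\R_\sym^{d\times d}$ that is again of linear growth, symmetric-quasiconvex in its second argument, and whose recession function $\tilde f^\infty$ exists and is jointly continuous; this is routine, e.g.\ by composing the first variable with a continuous retraction $\pi\colon\cl{\Omega'}\to\cl\Omega$ (which exists since $\partial\Omega$ is Lipschitz, taking $\Omega'$ inside a tubular neighbourhood) and setting $\tilde f(x,A):=f(\pi(x),A)$. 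Let $\tilde u_j,\tilde u\in\BD(\Omega')$ be the extensions of $u_j,u$ by zero. Then $\tilde u_j\to\tilde u$ in $\Lrm^1(\Omega';\R^d)$, and $\sup_j\abs{E\tilde u_j}(\Omega')<\infty$ by boundedness of the $\BD$-trace operator, whence $\tilde u_j\toweakstar\tilde u$ in $\BD(\Omega')$; moreover extension by zero contributes to $E\tilde u_j$ precisely a jump part $(u_j|_{\partial\Omega}\odot n_\Omega)\,\Hcal^{d-1}\restrict\partial\Omega$, so that with $C:=\int_{\Omega'\setminus\cl\Omega}\tilde f(x,0)\dd x$,
\[
  \Fcal(u_j)+C=\tilde\Fcal(\tilde u_j):=\int_{\Omega'}\tilde f(x,\Ecal\tilde u_j)\dd x+\int_{\Omega'}\tilde f^\infty\Bigl(x,\tfrac{\di E^s\tilde u_j}{\di\abs{E^s\tilde u_j}}\Bigr)\dd\abs{E^s\tilde u_j},
\]
and likewise $\Fcal(u)+C=\tilde\Fcal(\tilde u)$. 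Hence it suffices to prove $\liminf_j\tilde\Fcal(\tilde u_j)\ge\tilde\Fcal(\tilde u)$, i.e.\ the boundary-free case.

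\emph{Steps 2--3 (Young measure, representation, Jensen).} The measures $\mu_j:=E\tilde u_j$ are uniformly bounded in $\Mbf(\Omega';\R_\sym^{d\times d})$, so by Lemma~\ref{lem:YM_compactness}\,(i) a subsequence generates a BD-Young measure $\nu\in\BDY(\Omega')$, $\mu_j\toY\nu$; passing to barycenters, $[\nu]=\wslim_j E\tilde u_j=E\tilde u$, so $\tilde u$ is the underlying deformation of $\nu$. Since $\tilde f$ is Carath\'eodory of linear growth with $\tilde f^\infty$ existing and jointly continuous, the extended representation~\eqref{eq:ext_repr_Caratheodory} applied to $\epsilon_{\mu_j}\toweakstar\nu$ gives
\[
  \lim_j\tilde\Fcal(\tilde u_j)=\lim_j\ddprb{\tilde f,\epsilon_{\mu_j}}=\ddprb{\tilde f,\nu}=\int_{\Omega'}\dprb{\tilde f(x,\frarg),\nu_x}\dd x+\int_{\Omega'}\dprb{\tilde f^\infty(x,\frarg),\nu_x^\infty}\dd\lambda_\nu(x).
\]
Splitting $\lambda_\nu=\tfrac{\di\lambda_\nu}{\di\Lcal^d}\Lcal^d+\lambda_\nu^s$: for each fixed $x_0$ the integrand $\tilde f(x_0,\frarg)$ is symmetric-quasiconvex of linear growth with $\tilde f(x_0,\frarg)^\#=\tilde f^\infty(x_0,\frarg)$, and since the exceptional null sets in Theorem~\ref{thm:BDY_Jensen} depend only on $\nu$ and not on the integrand, applying that theorem with $h=\tilde f(x_0,\frarg)$ yields, for $\Lcal^d$-a.e.\ $x_0$,
\[
  \tilde f\Bigl(x_0,\dprb{\id,\nu_{x_0}}+\dprb{\id,\nu_{x_0}^\infty}\tfrac{\di\lambda_\nu}{\di\Lcal^d}(x_0)\Bigr)\le\dprb{\tilde f(x_0,\frarg),\nu_{x_0}}+\dprb{\tilde f^\infty(x_0,\frarg),\nu_{x_0}^\infty}\tfrac{\di\lambda_\nu}{\di\Lcal^d}(x_0),
\]
and, for $\lambda_\nu^s$-a.e.\ $x_0$, $\tilde f^\infty(x_0,\dprb{\id,\nu_{x_0}^\infty})\le\dprb{\tilde f^\infty(x_0,\frarg),\nu_{x_0}^\infty}$. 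Because $[\nu]=E\tilde u$, the $\Lcal^d$-density $\dprn{\id,\nu_x}+\dprn{\id,\nu_x^\infty}\tfrac{\di\lambda_\nu}{\di\Lcal^d}(x)$ equals $\Ecal\tilde u(x)$ a.e., while $E^s\tilde u=\dprn{\id,\nu_x^\infty}\lambda_\nu^s$, so $\abs{E^s\tilde u}=\absn{\dprn{\id,\nu_x^\infty}}\lambda_\nu^s$ and, by positive $1$-homogeneity of $\tilde f^\infty$ in its second argument, $\tilde f^\infty(x,\dprn{\id,\nu_x^\infty})\,\lambda_\nu^s=\tilde f^\infty\bigl(x,\tfrac{\di E^s\tilde u}{\di\abs{E^s\tilde u}}\bigr)\abs{E^s\tilde u}$. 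Inserting the two Jensen inequalities into the representation above gives $\lim_j\tilde\Fcal(\tilde u_j)\ge\tilde\Fcal(\tilde u)$, and undoing Step~1 yields $\liminf_j\Fcal(u_j)\ge\Fcal(u)$.

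\emph{Main obstacle.} All the genuine work has already been absorbed into Theorem~\ref{thm:BDY_Jensen} (and through it into the good-blow-up construction, Theorem~\ref{thm:good_blowups}); what remains is organizational. The one point that truly needs care is that $f$ is only Carath\'eodory in $x$, so a naive freezing argument based on continuity in $x$ is unavailable — but this is harmless here, because the null sets produced by the localization-based Jensen inequalities depend solely on $\nu$, so one may freeze $x=x_0$ and invoke Theorem~\ref{thm:BDY_Jensen} with the (for each fixed $x_0$) symmetric-quasiconvex integrand $f(x_0,\frarg)$, with no Scorza--Dragoni-type device required. The only other mild subtlety, arranging the extension $\tilde f$ so that its recession function remains jointly continuous, is handled by the retraction construction in Step~1.
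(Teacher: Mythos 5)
Your proof is correct and follows essentially the same route as the paper's: extend $u_j,u$ by zero past $\partial\Omega$ so the boundary term is absorbed into the singular part of the symmetrized derivative, generate a BD-Young measure, apply the extended representation~\eqref{eq:ext_repr_Caratheodory}, and insert the Jensen-type inequalities of Theorem~\ref{thm:BDY_Jensen}. The only cosmetic difference is the extension of $f$ (the paper extends it positively $1$-homogeneously outside $\cl{\Omega}$ so that the exterior contributes nothing, whereas you use a retraction and cancel a constant), and your observation that the exceptional null sets in Theorem~\ref{thm:BDY_Jensen} depend only on $\nu$ and not on the integrand is correct --- it is exactly what licenses freezing $x=x_0$, and the paper's own proof relies on it implicitly as well.
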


\begin{remark}
Of course, in the above theorem the boundary term can be omitted if the boundary values of all $u_j$ are the same as the boundary value of the limit $u$, or if $f \geq 0$, see Remark~2 in~\cite{KriRin10RSIF} for more explanation.
\end{remark}

\begin{proof}
Let $u_j \toweakstar u$ in $\BD(\Omega)$ and consider $u_j,u$ to be extended by zero to $\R^d$. Assume also, taking a subsequence if necessary, that $Eu_j \toY \nu$ in $\BDY(\R^d)$. The operation of taking subsequences does not preclude our aim to prove lower semicontinuity since we will show an inequality for all such subsequences, which then clearly also holds for the original sequence.

For the barycenter of $\nu$ we have
\[
  [\nu] = Eu \restrict \Omega + (u|_{\partial \Omega} \odot n_{\Omega}) \, \Hcal^{d-1} \restrict \partial \Omega.
\]
Denote by $\lambda_\nu^*$ the singular part of $\lambda_\nu$ with respect to $\abs{E^s u} + \Hcal^{d-1} \restrict \partial\Omega$, i.e.\ $\lambda_\nu^*$ is concentrated in an $(\abs{E^s u} + \Hcal^{d-1} \restrict \partial\Omega)$-negligible set. We compute
\begin{align*}
  &\dprb{\id,\nu_x} + \dprb{\id,\nu_x^\infty} \frac{\di \lambda_\nu}{\di \Lcal^d}(x)
    = \frac{\di [\nu]}{\di \Lcal^d}(x) = \begin{cases}
      \Ecal u(x)  &\text{for $\Lcal^d$-a.e.\ $x \in \Omega$,} \\
      0           &\text{for $\Lcal^d$-a.e.\ $x \in \R^d \setminus \Omega$,}
    \end{cases} \\
  &\frac{\dpr{\id,\nu_x^\infty}}{\abs{\dpr{\id,\nu_x^\infty}}}
    = \frac{\di [\nu]^s}{\di \abs{[\nu]^s}}(x)
    = \begin{cases}
      \displaystyle\frac{\di E^s u}{\di \abs{E^s u}}(x)
        &\text{for $\abs{E^su}$-a.e.\ $x \in \Omega$,} \\
      \displaystyle\frac{u|_{\partial \Omega}(x) \odot n_{\Omega}(x)}
        {\abs{u|_{\partial \Omega}(x) \odot n_{\Omega}(x)}}
        &\text{for $\abs{u} \Hcal^{d-1}$-a.e.\ $x \in \partial \Omega$,}
    \end{cases} \\
  &\dprb{\id,\nu_x^\infty} = 0  \qquad\text{for $\lambda_\nu^*$-a.e.\ $x \in \R^d$,} \\
  &\abs{\dpr{\id,\nu_x^\infty}} \lambda_\nu^s = \abs{E^s u}
    + \absb{u|_{\partial \Omega} \odot n_{\Omega}} \, \Hcal^{d-1} \restrict \partial \Omega, \\
  &\dprb{\id,\nu_x} = 0  \qquad\text{for $x \in \R^d \setminus \cl{\Omega}$,} \\
  &\lambda_\nu \restrict (\R^d \setminus \cl{\Omega}) = 0.
\end{align*}
Moreover, consider $f$ to be extended to $\R^d \times \R_\sym^{d \times d}$ as follows: first extend $f^\infty$ restricted to $\cl{\Omega} \times \partial \Bbb_\sym^{d \times d}$ continuously to $\R^d \times \partial \Bbb_\sym^{d \times d}$ (where $\Bbb_\sym^{d \times d} := \Bbb^{d \times d} \cap \R_\sym^{d \times d}$) and then set $f(x,A) := \abs{A}f^\infty(x,A/\abs{A})$ for $x \in \R^d \setminus \cl{\Omega}$. Hence, the so extended $f$ is still a Carath\'{e}odory function, $f^\infty$ is jointly continuous and $f(x,0) = 0$ for all $x \in \R^d \setminus \cl{\Omega}$. The extended representation result for generalized Young measures~\eqref{eq:ext_repr_Caratheodory} in Section~\ref{ssc:YM} (the original result is in Proposition~2~(i) of~\cite{KriRin10CGGY}), together with Theorem~\ref{thm:BDY_Jensen} yields
\begin{align*}
  \liminf_{j\to\infty} \Fcal(u_j) &= \int \dprb{f(x,\frarg),\nu_x}
    + \dprb{f^\infty(x,\frarg),\nu_x^\infty} \frac{\di \lambda_\nu}{\di \Lcal^d}(x) \dd x \\ 
  &\qquad + \int \dprb{f^\infty(x,\frarg),\nu_x^\infty} \dd \lambda_\nu^s(x) \\
  &\geq \int f \biggl( x, \dprb{\id,\nu_x} + \dprb{\id,\nu_x^\infty}
    \frac{\di \lambda_\nu}{\di \Lcal^d}(x) \biggr) \dd x \\
  &\qquad + \int f^\infty \bigl( x, \dprb{\id,\nu_x^\infty} \bigr) \dd \lambda_\nu^s(x) \\
  &= \Fcal(u).
\end{align*}
Hence we have established lower semicontinuity.
\end{proof}

\begin{remark}
Symmetric quasiconvexity is also necessary for weak* lower semicontinuity, since it is already necessary for weak* lower semicontinuity of $\Fcal$ restricted to $\Wrm^{1,\infty}(\Omega;\R^d)$, which is a subspace of $\BD(\Omega)$.
\end{remark}

\begin{remark}[Recession functions] \label{rem:rec_func}
Notice that we needed to require the existence of the strong recession function $f^\infty$ in the previous result and could not just use the generalized recession function $f^\#$. Unfortunately, this cannot be avoided as long as no Alberti-type theorem is available in $\BD$. The reason is that for lower semicontinuity the \emph{lower} generalized recession function
\[
  f_\#(x,A) := \liminf_{t\to\infty} \frac{f(x,tA)}{t},  \qquad\text{$x \in \cl{\Omega}$, $A \in \R^{d \times d}$,}
\]
would be the natural choice of recession function, since for $f_\#$ it still holds that
\[
  \liminf_{j\to\infty} \Fcal(u_j) \geq \int \dprb{f(x,\frarg),\nu_x} \dd x
    + \int \dprb{f_\#(x,\frarg),\nu_x^\infty} \dd \lambda_\nu(x),
\]
see Theorem~2.5~(iii) in~\cite{AliBou97NUIG} (recall that $f$ is Lipschitz continuous by quasiconvexity). The problem with that choice, however, is that we cannot easily ascertain that $f_\#$ is symmetric-quasiconvex. For $f$ such that we know a-priori that $f_\#$ is symmetric-quasiconvex, the above theorem also holds with $f_\#$ in place of $f^\infty$. Indeed, take a sequence $(f_k) \subset \Ebf(\Omega;\R_\sym^{d \times d})$ with $f_k \toup f$, $f_k^\infty \toup f_\#$, and define $\Fcal_k$ like $\Fcal$, but with $f$ replaced by $f_k$. Also, let $\Fcal_\#$ be the functional with $f^\infty$ replaced by $f_\#$. Then,
\begin{align*}
  \liminf_{j\to\infty} \Fcal_\#(u_j) &\geq \lim_{k\to\infty} \lim_{j\to\infty} \Fcal_k(u_j)
    = \lim_{k\to\infty} \ddprb{f_k,\nu} \\
  &= \int \dprb{f(x,\frarg),\nu_x} \dd x + \int \dprb{f_\#(x,\frarg),\nu_x^\infty} \dd \lambda_\nu^s(x) \\
  &\geq \Fcal_\#(u)
\end{align*}
by the monotone convergence theorem and the Jensen-type inequalities from Lemmas~\ref{lem:Jensen_regular},~\ref{lem:Jensen_singular}. Hence, $\Fcal_\#$ is weakly* lower semicontinuous.
\end{remark}

\begin{remark}[Recession functions II]
In the $\BV$-case, most previous results were formulated for the (upper) generalized recession function $f^\#$, which by Fatou's Lemma we know to be quasiconvex whenever $f$ is. This is explained by the fact that $f^\infty = f_\# = f^\#$ on the rank-one cone, and by Alberti's Rank-One Theorem, we know that at $\abs{D^s u}$-almost every $x \in \Omega$, $\rank \Bigl( \frac{D^s u}{\abs{D^s u}}(x) \Bigr) \leq 1$, so the different recession functions are interchangeable. Of course, if we had an Alberti-type theorem in $\BD$, for which the natural conjecture is
\[
  \frac{E^s u}{\abs{E^s u}}(x) \in \setb{ a \odot b }{ a,b \in \R^d }
  \qquad\text{for $\abs{E^s u}$-a.e.\ $x \in \Omega$,}
\]
we could indeed use $f^\#$ instead of $f^\infty$. In fact, assuming that this conjecture in $\BD$ is true, we have $\Fcal_\# = \Fcal^\#$ and so, since we know from the previous remark that $\Fcal_\#$ is weakly* lower semicontinuous, we conclude the same for $\Fcal^\#$.
\end{remark}

The Direct Method of the Calculus of Variations together with the usual compactness results in $\BD$ immediately implies:

\begin{corollary}[Existence of minimizers]  \label{cor:min_existence}
Let $f \colon \cl{\Omega} \times \R_\sym^{d \times d}$ be as in Theorem~\ref{thm:BD_lsc}, and additionally assume the coercivity condition
\[
  m(\abs{A}-1) \leq f(x,A),  \qquad\text{$x \in \cl{\Omega}$, $A \in \R_\sym^{d \times d}$,}
\]
for some $m > 0$. Then, the variational problem
\[
  \Fcal(u) \quad\to\quad \min  \qquad\text{over $u \in \BD(\Omega)$}
\]
with $\Fcal$ defined as in~\eqref{eq:F_def}, has a solution.
\end{corollary}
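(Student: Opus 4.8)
The plan is to apply the \textbf{Direct Method of the Calculus of Variations}: show that $\Fcal$ is bounded below, that every minimizing sequence is bounded in $\BD(\Omega)$, extract a weakly* convergent subsequence, and conclude via the lower semicontinuity Theorem~\ref{thm:BD_lsc}. For the coercivity part, I would first observe that the pointwise bound $m(\abs{A}-1)\le f(x,A)$ passes to the recession function, giving $f^\infty(x,A)\ge m\abs{A}$ for all $x\in\cl{\Omega}$, $A\in\R_\sym^{d\times d}$ (by $1$-homogeneity it suffices to note $f^\infty(x,A)\ge m$ for $\abs{A}=1$). Moreover, for a unit vector $n$ and any $a\in\R^d$ one computes $\abs{a\odot n}^2=\tfrac12\abs{a}^2+\tfrac12(a\cdot n)^2\ge\tfrac12\abs{a}^2$, so the boundary integrand controls $\abs{u|_{\partial\Omega}}$. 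Combining these three estimates with $\absb{\frac{\di E^su}{\di\abs{E^su}}}=1$ yields, for every $u\in\BD(\Omega)$,
\[
  \Fcal(u) \ge m\,\abs{Eu}(\Omega) + \frac{m}{\sqrt2}\,\norm{u|_{\partial\Omega}}_{\Lrm^1(\partial\Omega;\R^d)} - m\,\abs{\Omega}.
\]
Hence $\inf_{\BD(\Omega)}\Fcal>-\infty$, and along a minimizing sequence $(u_j)$ both $\abs{Eu_j}(\Omega)$ and $\norm{u_j|_{\partial\Omega}}_{\Lrm^1(\partial\Omega;\R^d)}$ stay bounded.

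Next I would upgrade this to a bound on $\norm{u_j}_{\Lrm^1(\Omega;\R^d)}$. For each $j$ pick a rigid deformation $r_j$ with $\norm{u_j+r_j}_{\Lrm^{d/(d-1)}(\Omega;\R^d)}\le C\abs{Eu_j}(\Omega)$ (see Section~\ref{ssc:BD}); since $E(u_j+r_j)=Eu_j$, the sequence $(u_j+r_j)$ is bounded in $\BD(\Omega)$, so by boundedness of the trace operator the traces $(u_j+r_j)|_{\partial\Omega}$ are bounded in $\Lrm^1(\partial\Omega;\R^d)$. Together with the boundedness of $u_j|_{\partial\Omega}$ from the previous step, $\norm{r_j|_{\partial\Omega}}_{\Lrm^1(\partial\Omega;\R^d)}$ is bounded. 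Since $\partial\Omega$ is the boundary of a bounded Lipschitz domain it is not contained in any affine hyperplane, so the restriction map from the finite-dimensional space of rigid deformations into $\Lrm^1(\partial\Omega;\R^d)$ is injective, hence bounded below; therefore $(r_j)$ is bounded and so is $(u_j)$ in $\BD(\Omega)$.

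Finally, by the weak* compactness of norm-bounded sequences in $\BD(\Omega)$ recalled in Section~\ref{ssc:BD}, I extract a (non-relabeled) subsequence with $u_j\toweakstar u$ for some $u\in\BD(\Omega)$. The assumptions on $f$ here are exactly those of Theorem~\ref{thm:BD_lsc}, so that theorem gives $\Fcal(u)\le\liminf_{j\to\infty}\Fcal(u_j)=\inf_{\BD(\Omega)}\Fcal$, whence $u$ is a minimizer.

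The only non-routine point is the second paragraph: because the bulk and Cantor parts of $\Fcal$ depend on $u$ only through $\Ecal u$ and $E^su$, they are invariant under adding a rigid deformation, so coercivity alone cannot bound $(u_j)$ in $\Lrm^1$. One is forced to use the boundary term, which breaks this invariance, together with the fact that the boundary trace pins down the rigid-motion component; this is precisely where the Lipschitz regularity of $\partial\Omega$ and the non-degeneracy estimate $\abs{a\odot n}\ge\abs{a}/\sqrt2$ enter.
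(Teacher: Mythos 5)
Your proof is correct and follows exactly the route the paper intends: the paper offers no written proof beyond invoking the Direct Method together with $\BD$-compactness and Theorem~\ref{thm:BD_lsc}. Your argument supplies the details the paper leaves implicit, and in particular your second paragraph correctly handles the only delicate point, namely that coercivity controls $Eu_j$ but not the rigid-deformation component, which must be pinned down via the boundary term and the injectivity of the trace on the finite-dimensional space of rigid deformations.
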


\begin{remark}[Dirichlet boundary conditions]
Since the trace operator is not weakly* continuous in $\BD(\Omega)$, boundary conditions in general are not preserved under this convergence, and we need to switch to a suitable relaxed formulation of Dirichlet boundary conditions. However, since for linear growth integrands all parts of the symmetrized derivative may interact, this constraint is not easily formulated, and is probably only meaningful in connection with concrete problems. Some results for special $\BD$-functions can be found in~\cite{BeCoDa98CLSP}, Chapter~II.8 of~\cite{Tema85MPP} (also see Proposition~II.7.2) treats the case where additionally divergences converge weakly. Finally, Section~14 of~\cite{Gius83MSFB} contains general remarks on boundary conditions for linear growth functionals.
\end{remark}

Also, we immediately have the following relaxation theorem.

\begin{corollary}[Relaxation]  \label{cor:relaxation}
Let $f \in \Ebf(\Omega;\R_\sym^{d \times d})$ be symmetric-quasiconvex in its second argument. Then, the lower semicontinuos envelope of the functional
\[
  \int_\Omega f \bigl( x, \Ecal u(x) \bigr) \dd x
    + \int_{\partial \Omega} f^\infty \bigl( x, u(x) \odot n_\Omega(x) \bigr) \dd \Hcal^{d-1}(x),
    \qquad u \in \LD(\Omega),
\]
with respect to weak* convergence in $\BD(\Omega)$ is the functional $\Fcal$ from~\eqref{eq:F_def}.
\end{corollary}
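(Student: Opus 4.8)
The plan is to prove the two inequalities $\overline{G}\ge\Fcal$ and $\overline{G}\le\Fcal$, where I denote by $G$ the functional in the statement (with $u|_{\partial\Omega}$ standing for the $\BD$-trace) and by
\[
  \overline{G}(u) := \inf\setb{ \liminf_{j\to\infty} G(u_j) }{ (u_j)\subset\LD(\Omega),\ u_j\toweakstar u\ \text{in}\ \BD(\Omega) },\qquad u\in\BD(\Omega),
\]
its weak* lower semicontinuous envelope. Two preliminary observations will be used throughout. First, $f\in\Ebf(\Omega;\R_\sym^{d\times d})$ together with the symmetric-quasiconvexity hypothesis satisfies all assumptions (i)--(iv) of Theorem~\ref{thm:BD_lsc}: (i) and (ii) hold because every element of $\Ebf$ is jointly continuous with linear growth, (iii) is the standing hypothesis, and (iv) holds because $f^\infty$ exists for $f\in\Ebf$, coincides with the limit in~\eqref{eq:f_infty_def_Lip} since $f$ is Lipschitz by symmetric-quasiconvexity (Section~\ref{ssc:sym_qc}), and is jointly continuous on $\cl\Omega\times\R_\sym^{d\times d}$, being the positively $1$-homogeneous extension of the restriction of the continuous function $Sf$ to $\cl\Omega\times(\partial\Bbb^{d\times d}\cap\R_\sym^{d\times d})$. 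Second, $\Fcal$ restricted to $\LD(\Omega)$ equals $G$, because $E^su=0$ there makes the interior singular term of $\Fcal$ vanish.

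For $\overline{G}\ge\Fcal$: by the first observation, Theorem~\ref{thm:BD_lsc} applies and $\Fcal$ is sequentially weakly* lower semicontinuous on $\BD(\Omega)$. Hence, for any $u\in\BD(\Omega)$ and any $(u_j)\subset\LD(\Omega)$ with $u_j\toweakstar u$, the second observation gives $\liminf_j G(u_j)=\liminf_j\Fcal(u_j)\ge\Fcal(u)$; taking the infimum over all such sequences yields $\overline{G}(u)\ge\Fcal(u)$.

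For $\overline{G}\le\Fcal$ I would exhibit a recovery sequence. Fix $u\in\BD(\Omega)$. By the mollification argument recalled in Section~\ref{ssc:BD}, $\LD(\Omega)$ is $\langle\frarg\rangle$-strictly dense in $\BD(\Omega)$, so there is $(u_j)\subset\LD(\Omega)$ with $u_j\to u$ $\langle\frarg\rangle$-strictly; since $\langle\frarg\rangle$-strict convergence is stronger than weak* convergence, $(u_j)$ is admissible in the definition of $\overline{G}$, and it only remains to show $G(u_j)\to\Fcal(u)$. By the second observation $G(u_j)=\Fcal(u_j)$, and $\Fcal$ is $\langle\frarg\rangle$-strictly continuous on $\BD(\Omega)$: for the two interior integrals this is Reshetnyak's Continuity Theorem~\ref{thm:reshetnyak}, equivalently Corollary~\ref{cor:F_strictly_cont_ext}, while for the boundary integral it is the boundary-term version of Corollary~\ref{cor:F_strictly_cont_ext} noted immediately after that result. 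Therefore $G(u_j)=\Fcal(u_j)\to\Fcal(u)$, so $\overline{G}(u)\le\Fcal(u)$. Combining the two inequalities gives $\overline{G}=\Fcal$.

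The step requiring the most care is the $\langle\frarg\rangle$-strict continuity of the boundary contribution $u\mapsto\int_{\partial\Omega}f^\infty(x,u|_{\partial\Omega}\odot n_\Omega)\dd\Hcal^{d-1}$, since the trace operator fails to be weakly* continuous on $\BD(\Omega)$. The standard device --- which underlies the boundary-term version of Corollary~\ref{cor:F_strictly_cont_ext} and mirrors the extension used in the proof of Theorem~\ref{thm:BD_lsc} --- is to enlarge $\Omega$ to a bounded Lipschitz domain $\Omega'\supset\cl\Omega$, to extend $f$ to $\cl{\Omega'}\times\R_\sym^{d\times d}$ by $f(x,A):=\abs{A}\,f^\infty(x,A/\abs{A})$ for $x\notin\cl\Omega$ (so that $f(x,0)=0$ and $f^\infty$ remains jointly continuous), and to extend every competitor by zero: the zero extension $\tilde u\in\BD(\Omega')$ then carries the trace as the singular measure $(u|_{\partial\Omega}\odot n_\Omega)\,\Hcal^{d-1}\restrict\partial\Omega$, whence $\Fcal_\Omega(u)=\overline{\Fcal_{\Omega'}}(\tilde u)$ for a functional on $\Omega'$ with no boundary term, to which the already-established Corollary~\ref{cor:F_strictly_cont_ext} and the $\langle\frarg\rangle$-strict density in $\BD(\Omega')$ apply directly; here one also invokes the standard continuity of the $\BD$-trace under strict, hence $\langle\frarg\rangle$-strict, convergence, which makes the zero-extension map itself $\langle\frarg\rangle$-strictly continuous.
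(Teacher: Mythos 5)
Your proof is correct and follows essentially the same route as the paper: the inequality $\overline{G}\le\Fcal$ comes from the $\langle\frarg\rangle$-strict density of $\LD(\Omega)$ in $\BD(\Omega)$ together with Reshetnyak's Continuity Theorem (Corollary~\ref{cor:F_strictly_cont_ext}), and the reverse inequality from the weak* lower semicontinuity of $\Fcal$ established in Theorem~\ref{thm:BD_lsc}. Your additional care with the boundary term (extension by zero to a larger domain and continuity of the trace under strict convergence) simply makes explicit what the paper leaves to the remark following Corollary~\ref{cor:F_strictly_cont_ext}.
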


Of course, for $f \geq 0$, we again may omit the boundary term.

\begin{proof}
Denote the $\Gcal$ the functional defined in the statement of the corollary and let $\Gcal_*$ be its weakly* (sequentially) lower semicontinuous envelope. By Reshetnyak's Continuity Theorem~\ref{thm:reshetnyak}, also see Corollary~\ref{cor:F_strictly_cont_ext}, $\Fcal$ is the $\langle\frarg\rangle$-strictly continuous extension of $\Gcal$ to $\BD(\Omega)$, in particular $\Gcal_* \leq \Fcal$. On the other hand, $\Fcal$ is weakly* lower semicontinuous, hence also $\Fcal \leq \Gcal_*$. 
\end{proof}

\begin{remark}
Of course it would be desirable to have a relaxation theorem for integrands $f$ that are not symmetric-quasiconvex. Then, the relaxed functional should be $\Fcal$ from~\eqref{eq:F_def}, but with $f$ replaced by its symmetric-quasiconvex envelope $\mathrm{SQ}f$. However, we do not know whether $(\mathrm{SQ}f)^\infty$ exists, and without an Alberti-type theorem in $\BD$, we cannot show lower semicontinuity for the functional with $(\mathrm{SQ}f)^\infty$ replaced by $(\mathrm{SQ}f)^\#$ within our framework, see the remarks above.
\end{remark}

\section{Concluding remarks} \label{sc:concl_remarks}

It should be remarked that most parts of the proof could also be reformulated in a more elementary fashion, circumventing the machinery of Young measures. However, without the use of tangent Young measures and working with blow-up sequences directly, several arguments would require additional technical steps. Particularly the construction of ``good'' blow-ups through the ``iterated blow-up'' trick in Theorem~\ref{thm:good_blowups} is not easily formulated with mere sequences instead of tangent Young measures. At the core of this lies the fact that in the blow-up technique, we are not primarily interested with the blow-up \emph{limit}, but with the behavior of the blow-up \emph{sequence}, just as represented in a (generalized) Young measure limit. This is precisely the idea behind the concept of tangent Young measures, and the Localization Principles, Propositions~\ref{prop:localize_reg} and~\ref{prop:localize_sing}, encapsulate all the technicalities of the blow-up process. Therefore, while Young measures are not in a strict sense necessary to formulate the proof, they provide an elegant conceptual framework for organizing the course of the argument by separating the technical aspects from the core ideas and allowing for a clearer exposition.

For integrands $f(x,u,Eu)$ depending also on the function $u$ itself, the results presented here (in particular the Jensen-type inequalities in Theorem~\ref{thm:BDY_Jensen}) should also yield a lower semicontinuity theorem for this extended situation together with some \enquote{freezing of $u$} idea for Young measures. One needs to be careful with the definition of a suitable recession function, though, and also jump points (where instead of $u(x)$ we have only the one-sided traces $u^-(x), u^+(x)$) need special attention. This is currently work in progress.

\appendix

\section{Existence of non-zero tangent measures} \label{ax:Tan_existence}

In this appendix we give a Preiss' proof on the existence of non-zero tangent measures, originally in Theorem~2.5 of~\cite{Prei87GMDR}.

\begin{lemma}
Let $\mu \in \Mbf_\loc(\R^d;\R^N)$. At $\abs{\mu}$-almost every $x_0 \in \R^d$, the set $\Tan(\mu,x_0)$ contains a non-zero measure.
\end{lemma}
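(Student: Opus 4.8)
The plan is to prove the statement by a contradiction/measure-estimate argument in the spirit of Preiss. Fix $\mu\in\Mbf_\loc(\R^d;\R^N)$; since tangent measures at $x_0$ depend only on the behaviour of $\abs{\mu}$ near $x_0$ together with the Radon--Nikod\'ym density $\frac{\di\mu}{\di\abs{\mu}}(x_0)$ (via~\eqref{eq:Tan_density}), it suffices to treat the positive measure $\lambda:=\abs{\mu}$ and show $\Tan(\lambda,x_0)\neq\{0\}$ for $\lambda$-a.e.\ $x_0$. By restricting to the relatively compact balls $B(0,k)$, $k\in\N$, and using countable additivity of the exceptional set, we may further assume $\lambda\in\Mbf^+(\R^d)$ is finite and supported in a fixed ball. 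The set where $\Tan(\lambda,x_0)=\{0\}$ will be shown to be $\lambda$-null.

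First I would fix the blow-up normalisation: for a bounded open neighbourhood $U$ of the origin (say $U=\Bbb^d$), consider the rescaled measures $\lambda_{x_0,r}:=[\lambda(B(x_0,r))]^{-1}T^{(x_0,r)}_*\lambda$, which are probability measures on the closure of $U$ once restricted. A tangent measure arises as a weak* limit of such $\lambda_{x_0,r_n}$ along $r_n\todown0$ (up to the rescaling-constant freedom recalled in Section~\ref{ssc:Tan}), and by the compactness of bounded sequences of measures such a limit along some subsequence always exists; the only way to get $\Tan(\lambda,x_0)=\{0\}$ is if every such limit is the zero measure, i.e.\ if mass escapes to the boundary of every dilate, which quantitatively means that the ratios $\lambda(B(x_0,2r))/\lambda(B(x_0,r))$ blow up along $r\todown0$. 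The core of the argument is therefore a Vitali-type covering / doubling estimate: I would show that the set
\[
  Z:=\setB{x_0\in\supp\lambda}{\limsup_{r\todown0}\frac{\lambda(B(x_0,2r))}{\lambda(B(x_0,r))}=\infty}
\]
is $\lambda$-null, and then argue that for $x_0\notin Z$ the rescaled measures do not lose all their mass, so that a non-zero tangent measure can be extracted.

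The key steps, in order, are: (1) reduce to $\lambda$ finite and compactly supported; (2) for $t>0$ and $M>0$ define $Z_{t,M}$ as the set of $x_0$ where $\lambda(B(x_0,2r))\ge M\lambda(B(x_0,r))$ for some $r<t$, and show $\lambda(Z_{t,M})$ is small by a covering argument — cover $Z_{t,M}$ by such balls $B(x_0,2r)$, extract a Besicovitch (or Vitali $5r$) subcover, and sum the inequalities to get $\lambda(Z_{t,M})\le \frac{C_d}{M}\lambda(\R^d)$ with $C_d$ the Besicovitch constant; letting $M\to\infty$ (uniformly in $t$) shows $\lambda(\bigcap_M\bigcup_t Z_{t,M})=0$, i.e.\ $\lambda(Z)=0$; (3) for $x_0\notin Z$ pick $M_0<\infty$ with $\lambda(B(x_0,2r))\le M_0\lambda(B(x_0,r))$ for all small $r$, so that, setting $c_n:=[\lambda(B(x_0,r_n))]^{-1}$ for a suitable sequence $r_n\todown0$, the measures $\sigma_n:=c_nT^{(x_0,r_n)}_*\lambda$ satisfy $\sigma_n(B(0,2^k))\le M_0^k$ for every $k$, hence are locally uniformly bounded; (4) extract a weak* convergent subsequence $\sigma_n\toweakstar\sigma$ in $\Mbf_\loc(\R^d;\R^N)$, and verify $\sigma(\cl{B(0,1)})\ge\liminf\sigma_n(\cl{B(0,1)})\ge \sigma_n(B(0,1/2))\cdot(\text{const})$ — more carefully, since $\sigma_n(B(0,1))=1$ for all $n$ by construction and no mass escapes to infinity on compacta, one gets $\sigma\neq0$ by testing against a cutoff $\phi\in\Crm_c(\R^d;[0,1])$ with $\phi\equiv1$ on $\cl{B(0,1)}$ and using the doubling bound to control $\int\phi\dd\sigma_n$ from below away from $0$; (5) conclude $\sigma\in\Tan(\lambda,x_0)\setminus\{0\}$, and finally transfer back to $\mu$ via $\frac{\di\mu}{\di\abs{\mu}}(x_0)\sigma\in\Tan(\mu,x_0)$ using~\eqref{eq:Tan_abs}--\eqref{eq:Tan_density}.

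The main obstacle I anticipate is step (4): one must rule out the possibility that, although $\sigma_n(B(0,1))=1$, the limit $\sigma$ is still zero because all the mass concentrates on the sphere $\partial B(0,1)$ (a set whose $\sigma$-measure is not controlled by weak* convergence). This is exactly the subtlety Preiss addresses: the fix is to not normalise at a single fixed radius but to choose, using the non-doubling-failure at $x_0\notin Z$, radii $r_n$ and a fixed larger radius $R$ (with $\lambda(\partial B(x_0,Rr_n))=0$, achievable by perturbing $R$ slightly) so that the normalised masses on an \emph{interior} ball $B(0,\rho)$ with $\rho<R$ stay bounded below uniformly; then lower semicontinuity of mass on open sets under weak* convergence gives $\sigma(B(0,\rho))>0$. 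Getting this interior lower bound — essentially a reverse doubling statement at $\lambda$-a.e.\ point, or a clever choice of the normalising radius along the sequence — is the delicate heart of the argument, and it is this that the appendix promises to carry out in full detail; the covering estimate in steps (2)--(3) is comparatively routine.
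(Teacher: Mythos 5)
Your overall architecture (reduce to a finite positive measure supported in a compact set, prove a doubling-type estimate to get local mass bounds for the normalized blow-ups, extract a weak* limit, transfer back via \eqref{eq:Tan_density}) matches the paper's, but step~(2) contains a genuine gap, and it is the heart of the matter. The covering argument you sketch does not close: after extracting a Vitali or Besicovitch subfamily from the balls $B(x,r_x)$ witnessing $\lambda(B(x,2r_x))\ge M\lambda(B(x,r_x))$, the inequalities you want to sum read $\lambda(B(x_i,r_i))\le M^{-1}\lambda(B(x_i,2r_i))$, and the \emph{doubled} balls $B(x_i,2r_i)$ of a disjoint (or bounded-overlap) family admit no overlap control whatsoever --- arbitrarily many disjoint small balls can have all of their doubles containing one common heavy set --- so $\sum_i\lambda(B(x_i,2r_i))$ is not bounded by $C_d\,\lambda(\R^d)$. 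Worse, the statement you are aiming for, namely $\lambda(Z)=0$ for $Z=\{x:\limsup_{r\todown0}\lambda(B(x,2r))/\lambda(B(x,r))=\infty\}$, is false for general Radon measures: a Radon measure can fail to be asymptotically doubling at $\lambda$-almost every point of its support. What is true, and what the appendix actually proves, is the weaker assertion that at $\lambda$-a.e.\ $x_0$ there exists \emph{some} sequence $r_n\todown0$ along which $\lambda(B(x_0,kr_n))\le\beta_k\,\lambda(B(x_0,r_n))$ for every integer $k$ simultaneously, with $\beta_k$ independent of $n$. The paper obtains this from (a) an estimate at a \emph{single fixed scale} $r$, $\mu(\{x:\mu(B(x,tr))\ge\beta\mu(B(x,r))\})\le(2(t+1))^d\beta^{-1}\mu(K)$, proved without any covering theorem via the elementary averaging identity \eqref{eq:measure_trafo} (Fubini), which is available precisely because all points of the bad set share the same $r$; and (b) a Fatou/diagonal argument over the scales $r=1/j$ and the dilation factors $k$ to produce one good sequence per point. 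Your step~(3) would not survive this weakening either: iterating $2$-doubling to get $\sigma_n(B(0,2^k))\le M_0^k$ requires the bound at the radii $2r_n,4r_n,\dots$, which need not belong to the good sequence; this is why all factors $k$ must be controlled simultaneously along a single sequence.

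By contrast, the obstacle you single out in step~(4) is not an obstacle. Tangent measures are weak* limits in $\Mbf_\loc(\R^d)$, so with $c_n=\lambda(B(x_0,r_n))^{-1}$ one has $\sigma_n(B(0,1))=1$, and for any $\phi\in\Crm_c(\R^d;[0,1])$ with $\phi\equiv1$ on $\cl{B(0,1)}$ one gets $\dpr{\phi,\sigma}=\lim_n\dpr{\phi,\sigma_n}\ge1$; mass concentrating on $\partial B(0,1)$ still produces a non-zero limit, since the sphere is a compact subset of $\R^d$, not a boundary through which mass can disappear. The sole role of the doubling bounds is the \emph{upper} estimate $\sup_n\sigma_n(B(0,k))<\infty$ for every $k$, without which the sequence is not weakly* sequentially compact in $\Mbf_\loc(\R^d)$ at all; no reverse doubling or interior lower bound is needed.
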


\begin{proof}
Using~\eqref{eq:Tan_abs}, we may assume that $\mu$ is a positive measure. Moreover, restricting if necessary to a sufficiently large closed ball containing $x_0$, we can even assume $\mu \in \Mbf^+(K)$ for some compact set $K \subset \R^d$ with $x_0 \in K$.

\proofstep{Step~1.} First, we note that for all relatively compact Borel sets $A \subset \R^d$ it holds that
\begin{equation} \label{eq:measure_trafo}
  \mu(A) = \frac{1}{\omega_d r^d} \int \mu(A \cap B(x,r)) \dd x,
\end{equation}
where $\omega_d$ denotes the volume of the unit ball in $\R^d$. This follows with the aid of Fubini's Theorem:
\begin{align*}
  \int \mu(A \cap B(x,r)) \dd x &= \int \int \ONE_A(y) \ONE_{B(x,r)}(y) \dd \mu(y) \dd x \\
  &= \int \ONE_A(y) \int \ONE_{B(y,r)}(x) \dd x \dd \mu(y) = \omega_d r^d \mu(A).
\end{align*}

\proofstep{Step~2.} We now show that for all $t > 1$ it holds that
\begin{equation} \label{eq:doubling_est}
  \lim_{\beta \to \infty} \limsup_{r \todown 0} \, \mu \bigl( \setb{ x \in K }{ \mu(B(x,tr))
  \geq \beta \mu(B(x,r)) } \bigr) = 0.
\end{equation}
For this, let $\epsilon > 0$, $\beta > (2(t+1))^d \mu(K)/\epsilon$ and fix any $r > 0$. Also define
\[
  E := \setb{ x \in K }{\mu(B(x,tr)) \geq \beta \mu(B(x,r))}.
\]
Whenever $B(x,r/2) \cap E \neq \emptyset$ for some $r > 0$, take $z \in B(x,r/2) \cap E$ to estimate
\[
  \beta \mu(B(x,r/2)) \leq \beta \mu(B(z,r)) \leq \mu(B(z,tr)) \leq \mu(B(x,(t+1)r)).
\]
Hence we get from~\eqref{eq:measure_trafo},
\begin{align*}
  \mu(E) &= \frac{1}{\omega_d \cdot (r/2)^d} \int \mu(E \cap B(x,r/2)) \dd x \\
  &\leq \frac{(2(t+1))^d}{\beta} \cdot \frac{1}{ \omega_d \cdot ((t+1)r)^d} \int \mu(B(x,(t+1)r)) \dd x \\
  &=\frac{(2(t+1))^d}{\beta} \mu(K) < \epsilon.
\end{align*}
This clearly implies~\eqref{eq:doubling_est}. In fact, it even implies this assertion with the limes superior replaced by the supremum over all $r > 0$. This, however, is due to the fact that we without loss of generality restricted the measure $\mu$ to the compact set $K$, and so a smallness assumption on $r$ is already implicit.

\proofstep{Step~3.} From~\eqref{eq:doubling_est} we see that for all $\epsilon > 0$ and all $k = 2,3,\ldots$ there exists constants $\beta_k > 0$ and $t_k > 0$ such that
\[
  \mu \bigl( \setb{ x \in K }{ \mu(B(x,kr)) \geq \beta_k \mu(B(x,r)) } \bigr) \leq \frac{\epsilon}{2^k}
  \qquad\text{whenever $r \in (0,t_k)$.}
\]
Then, for $r > 0$ set
\begin{align*}
  A_r := \setB{ x \in K }{ &\text{there exists a $k \in \{2,3,\ldots\}$ with $r \in (0,t_k)$ such that} \\
                           &\text{$\mu(B(x,kr)) \geq \beta_k \mu(B(x,r))$} }
\end{align*}
and observe that $\mu(E_r) \leq \epsilon$ by the previous estimate. Hence, also
\[
  A := \bigcup_{i=1}^\infty \bigcap_{j=i}^\infty A_{1/j}
\]
satisfies $\mu(A) \leq \epsilon$. Since $\epsilon > 0$ was arbitrary, this implies $\mu(A) = 0$.

Let now $x \in K \setminus A$. Then, for all $i \in \N$ there exists $j \geq i$ such that $x \notin A_{1/j}$, i.e.\ for all $k \in \N$ with $1/j \leq t_k$,
\[
  \mu(B(x,k/j)) \leq \beta_k \mu(B(x,1/j)).
\]
Therefore, for $\mu$-almost every $x_0 \in \supp \mu$ (and hence $\mu$-almost every $x_0 \in K$), there exists a sequence $r_n \todown 0$ with
\[
  \limsup_{n \to \infty} \frac{\mu(B(x_0,kr_n))}{\mu(B(x_0,r_n))} \leq \beta_k
  \qquad\text{for all $k \in \N$.}
\]
This allows us to infer that the sequence $c_n T_*^{(x_0,r_n)}$ with $c_n := \mu(B(x_0,r_n))^{-1}$ is weakly* compact in $\Mbf_\loc(\R^d)$ and every weak* limit of a subsequence is a non-zero tangent measure to $\mu$ at $x_0$.
\end{proof}



\providecommand{\bysame}{\leavevmode\hbox to3em{\hrulefill}\thinspace}
\providecommand{\MR}{\relax\ifhmode\unskip\space\fi MR }
\providecommand{\MRhref}[2]{%
  \href{http://www.ams.org/mathscinet-getitem?mr=#1}{#2}
}
\providecommand{\href}[2]{#2}

\end{document}